\newcommand{\R}{\mathbb{R}}
\newcommand{\C}{\mathbb{C}}
\newcommand{\K}{\mathbb{K}}
\newtheorem{lemm}{Lemma}
\newtheorem{theo}{Theorem}
\newtheorem{defi}[theo]{Definition}
\title{A tensor bidiagonalization method for higher-order singular value 
	decomposition with applications}
\author{A. El Hachimi\footnotemark[2] \thanks{Laboratory MSDA, Mohammed VI Polytechnic University, Green 
		City, Morocco.} 
	\and 
	K. Jbilou\footnotemark[1] \thanks{Universit\'e du Littoral Cote d'Opale, LMPA, 50 rue F. Buisson, 62228 
			Calais-Cedex, France}
	\and A. Ratnani\footnotemark[1]
	\and L. Reichel\thanks{Department of Mathematical Sciences, Kent State University, Kent, 
		OH 44242, USA.}
}
\begin{document}
	\maketitle
	
	\begin{abstract}
		The need to know a few singular triplets associated with the largest singular values of 
		third-order tensors arises in data compression and extraction. This paper describes a 
		new method for their computation using the t-product. Methods for determining a couple of
		singular triplets associated with the smallest singular values also are presented. The 
		proposed methods generalize available restarted Lanczos bidiagonalization methods for 
		computing a few of the largest or smallest singular triplets of a matrix. The methods of 
		this paper use Ritz and harmonic Ritz lateral slices to determine accurate 
		approximations of the largest and smallest singular triplets, respectively. Computed 
		examples show applications to data compression and face recognition.
	\end{abstract}
	
	\begin{keywords}
		tensors, t-product, partial tensor bidiagonalization, restarted tensor bidiagonalization, 
		singular value decomposition, face recognition. 
	\end{keywords}

	\section{Introduction}
	The last 20 years has seen an immense growth of the amount of data that is collected for 
	analysis, but it is a challenging problem to extract useful information from available 
	data. This difficulty arises, e.g., in machine learning, data mining, and deep learning; 
	see, e.g., Arnold et al. \cite{arnold2018}. The extraction of useful information from data
	that is represented by a \emph{matrix} often is facilitated by the singular value 
	decomposition of the matrix. Typically, only a few of the largest singular triplets, i.e.,
	the largest singular values and associated right and left singular vectors, are required 
	to extract useful information from the matrix. A restarted Lanczos bidiagonalization 
	method for computing accurate approximations of these singular triplets is described in 
	\cite{baglama2005augmented}, and R code written by Bryan W. Lewis is available at
	\cite{lewis2021}.
	
	In many recent applications the given data are represented by a multidimensional array. 
	These arrays, known as \emph{tensors}, are natural generalizations of matrices. Several
	approaches to define tensor-tensor products and tensor-matrix products are described in
	the literature, including the $n$-mode product \cite{BNR,kolda2009tensor}, the t-product 
	\cite{kilmer2013third,RU22}, and the c-product \cite{kernfeld2015tensor,RU21}. Generalizations of 
	the singular value decomposition (SVD) to tensors are described in \cite{kolda2009tensor} using
	the $n$-mode product (the so-called HOSVD), and in \cite{kernfeld2015tensor,kilmer2013third} 
	using the tensor c-product and 
	t-product. The need to compute the SVD or a partial SVD of a tensor arises in a variety
	of applications, including image restoration,  tensor completion \cite{elha1}, 
	robust tensor principal 
	component analysis \cite{lu2019tensor}, tensor compression \cite{kilmer-compression}, and 
	recognition of color faces \cite{hached2021multidimensional,hao2013facial}. These 
	applications require knowledge of the largest singular values and associated lateral
	tensor singular slices.
	
	It is the purpose of the this paper to introduce a new restarted tensor Lanczos 
	bidiagonalization method for third-order tensors using the t-product for approximating 
	a few of the largest singular values and associated lateral tensor singular slices. This 
	method generalizes the approach described in \cite{baglama2005augmented} from matrices to 
	tensors. We remark that the Lanczos bidiagonalization method (also known as the 
	Golub-Kahan bidiagonalization method) for third-order tensors using the t-product has
	been described in \cite{Elguide,ElIchi,kilmer2013third,RU2022}; however, this bidiagonalization method
	differs from the one of the present paper.
	
	In \cite{baglama2005augmented} the authors also describe a restarted Lanczos bidiagonalization method
	for the computation of a few of the smallest singular values and associated singular 
	vectors of a large matrix by determining harmonic Ritz values is presented. This paper 
	presents an analogous scheme for third-order tensors. 
	
	The organization of this paper is as follows. Section \ref{sec2} recalls some properties 
	of the t-product and Section \ref{sec4} reviews tensor Lanczos bidiagonalization of 
	third-order tensors using the t-product. Restarted tensor Lanczos bidiagonalization 
	methods are presented for the approximation of a few of the largest singular values and 
	associated lateral tensor singular slices by computing lateral tensor Ritz slices, as well
	as for approximating a few of the smallest singular values and associated lateral tensor 
	singular slices by evaluating harmonic lateral tensor Ritz slices. Section \ref{sec5} 
	discusses multidimensional principal component analysis using a partial tensor HOSVD with 
	application to face recognition, and Section \ref{sec6} presents a few computed examples. 
	Concluding remarks and possible extensions can be found in Section \ref{sec7}.

	\section{The tensor t-product}\label{sec2}
	This section reviews results by Kilmer et al. \cite{kilmer2013third,kilmer} and uses 
	notation employed there and by Kolda and Bader \cite{kolda2009tensor}. A third-order 
	tensor is an array $\mathcal{A}=[a_{ijk}]\in\R^{\ell\times p\times n}$. Matrices and 
	vectors are tensors of order two and one, respectively. A \emph{slice} or \emph{frame} of
	a third-order tensor $\mathcal{A}$ is a section obtained by fixing any one of the three 
	indices. Using MATLAB notation, $\mathcal{A}(i,:,:)$, $\mathcal{A}(:,j,:)$, and 
	$\mathcal{A}(:,:,k)$ denote the $i$th horizontal, the $j$th lateral, and the $k$th frontal
	slices of $\mathcal{A}$, respectively. The lateral slice $\mathcal{A}(:,j,:)$ also is 
	denoted by $\vec{\mathcal A}_j$, and the frontal slice $\mathcal{A}(:,:,k)$ is an 
	$\ell\times p$ matrix that is sometimes denoted by $\mathcal{A}^{(k)}$. A \emph{fiber} of a third 
	order tensor $\mathcal{A}$ is defined by fixing any two of the three indices. The fiber 
	$\mathcal{A}(i,j,:)$ is called a \emph{tube} of $\mathcal{A}$. We will use capital 
	calligraphic letters $\mathcal{A}$ to denote third-order tensors, capital letters $A$ to 
	identify matrices, bold face lower case letters $\bm{a}$ to denote tubes, and lower case 
	letters $a$ stand for scalars. Further, $\K^{\ell\times p}_n=\R^{\ell\times p\times n}$ 
	denotes the space of third-order tensors of size $\ell\times p\times n$, 
	$\K^\ell_n=\R^{\ell\times 1\times n}$ stands for the space of lateral slices of size 
	$\ell\times n$, and $\K_n=\R^{1\times 1\times n}$ denotes the space of tubes with $n$ 
	entries. For a third-order tensor $\mathcal{A}\in\K^{\ell\times p}_n$ with frontal slices 
	$\mathcal{A}^{(i)}$, $i=1,\ldots,n$, we define:
	\begin{itemize}
		\item The block circulant matrix associated with $\mathcal{A}$: 
		\begin{equation}\label{bcirc}
			{\tt bcirc}(\mathcal{A})=\begin{bmatrix}
				\mathcal{A}^{(1)}&  \mathcal{A}^{(n)} & \ldots & \mathcal{A}^{(2)}\\
				\mathcal{A}^{(2)}&  \mathcal{A}^{(1)} & \ldots & \mathcal{A}^{(3)}\\
				\vdots & \ddots & \ddots & \vdots \\
				\mathcal{A}^{(n)}&  \mathcal{A}^{(n-1)} & \ldots & \mathcal{A}^{(1)}\\
			\end{bmatrix}\in \K^{\ell n \times pn}.
		\end{equation}
		\item The operator ${\tt unfold}$ applied to ${\mathcal A}$ gives the matrix made up of
		its frontal slices,
		\[
		{\tt unfold}(\mathcal{A})=\begin{bmatrix}
			\mathcal{A}^{(1)}\\
			\mathcal{A}^{(2)}\\
			\vdots \\
			\mathcal{A}^{(n)}
		\end{bmatrix} \in \K^{\ell n\times p}.
		\]
		We also will need the inverse operator ${\tt fold}$ such that 
		${\tt fold}\left(\tt unfold\left(\mathcal{A}\right)\right)=\mathcal{A}$.
		\item The block diagonal matrix associated with $\mathcal{A}$ is defined as
		\[{\tt bdiag}\left(\mathcal{A}\right)=\begin{bmatrix}
			\mathcal{A}^{(1)} &    &    &  \\
			& \mathcal{A}^{(2)} &   &  \\
			&  &  \ddots &  \\
			&  &  &  \mathcal{A}^{(n)}
		\end{bmatrix}\in \K^{\ell n\times pn}.
		\]
	\end{itemize}
	
	\begin{defi}\label{def1}(\cite{kilmer})
		Let $\mathcal{A}\in\K^{\ell\times q}_{n}$ and $\mathcal{B}\in\K^{q\times p}_{n}$ be 
		third-order tensors. The t-product of $\mathcal{A}$ and $\mathcal{B}$ is defined by 
		\[
		\mathcal{A}\star\mathcal{B} := {\tt fold}\left({\tt bcirc}(\mathcal{A})\, 
		{\tt unfold}(\mathcal{B})\right)\in\K^{\ell\times p}_n.
		\]
	\end{defi}
	
	The block circulant matrix \eqref{bcirc} can be block-diagonalized by using the discrete 
	Fourier transform (DFT) as follows:
	\[
	{\tt bcirc}(\mathcal{A})=\left(F_n^H\otimes I_\ell\right)
	{\tt bdiag}(\widehat{\mathcal{A}})
	\left(F_n \otimes I_p\right),
	\]
	where $F_n\in\C^{n \times n}$ is the discrete Fourier matrix, $F_n^H$ denotes its 
	conjugate transpose, $\widehat{\mathcal{A}}$ stands for the Fourier transform of 
	$\mathcal{A}$ along each tube, $I_\ell\in\R^{\ell\times\ell}$ denotes the identity 
	matrix, and $\otimes$ is the Kronecker product. The matrix $\widehat{\mathcal{A}}$ can be
	computed with the fast Fourier transform (FFT) algorithm; see \cite{kilmer} for details. 
	Using MATLAB notations, we have 
	\[
	\widehat{\mathcal{A}}={\tt fft}(\mathcal{A}, [\, ], 3).
	\]
	The inverse operation can be evaluated in MATLAB with the command
	\[
	\mathcal{A}={\tt ifft}(\widehat{\mathcal{A}},[\,],3).
	\]
	Hence, the t-product $\mathcal{C}=\mathcal{A}\star\mathcal{B}$ can be evaluated as
	\begin{equation} \label{eq p}
		\widehat{\mathcal{C}}^{(i)}=\widehat{\mathcal{A}}^{(i)} \widehat{\mathcal{B}}^{(i)},
		\qquad i=1,2,\ldots,n,
	\end{equation}
	where $\widehat{\mathcal{A}}^{(i)}$, $\widehat{\mathcal{B}}^{(i)}$, and 
	$\widehat{\mathcal{C}}^{(i)}$ are the $i$th frontal slices of the tensors 
	$\widehat{\mathcal{A}}$, $\widehat{\mathcal{B}}$, and $\widehat{\mathcal{C}}$, 
	respectively. 
	
	As already pointed out by Kilmer et al. \cite{kilmer2013third}, one can use symmetry 
	properties of the DFT when applied to real data to reduce the computational effort when 
	evaluating the t-product with the FFT. This is described by the following result, which
	can be found, e.g., in \cite{Rojo}.
	
	\begin{lemm}
		Given a real vector $v\in\R^{n}$, the associated DFT vector $\widehat{v}=F_n v$ satisfies
		\[
		\widehat{v}_1\in\R,\quad {\tt conj}\left(\widehat{v}_i\right)=
		\widehat{v}_{n-i+2},\quad i=2,3,\ldots,\left[\dfrac{n+1}{2}\right],
		\]
		where ${\tt conj}$ denotes the complex conjugation operator and 
		$\left[\dfrac{n+1}{2}\right]$ denotes the integer part of $\dfrac{n+1}{2}$.
	\end{lemm}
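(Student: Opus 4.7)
The plan is to prove this by direct computation with the explicit entries of the Fourier matrix, exploiting only the reality of $v$ and the periodicity relation $\omega^{n}=1$, where $\omega = e^{-2\pi \mathrm{i}/n}$ is the primitive $n$th root of unity used in the definition of $F_n$. Recall that $(F_n)_{jk}=\omega^{(j-1)(k-1)}$, so
\[
\widehat{v}_j = \sum_{k=1}^{n}\omega^{(j-1)(k-1)}\,v_k.
\]

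First I would treat the case $j=1$: here every factor $\omega^{0}=1$, so $\widehat{v}_1=\sum_{k=1}^{n}v_k$, which is a sum of real numbers and therefore real. This is the easy half.

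For the conjugation identity, I would compute both sides and show they agree term-by-term. Since the $v_k$ are real, complex conjugation affects only the Fourier kernel, giving
\[
\overline{\widehat{v}_i} \;=\; \sum_{k=1}^{n}\overline{\omega^{(i-1)(k-1)}}\,v_k
\;=\; \sum_{k=1}^{n}\omega^{-(i-1)(k-1)}\,v_k.
\]
On the other side,
\[
\widehat{v}_{n-i+2} \;=\; \sum_{k=1}^{n}\omega^{(n-i+1)(k-1)}\,v_k
\;=\; \sum_{k=1}^{n}\omega^{n(k-1)}\omega^{-(i-1)(k-1)}\,v_k,
\]
and since $\omega^{n(k-1)}=1$ for every integer $k$, the two expressions coincide. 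This yields $\mathrm{conj}(\widehat{v}_i)=\widehat{v}_{n-i+2}$ for $i=2,\ldots,[(n+1)/2]$, which is exactly the claimed conjugate-symmetry (Hermitian) property of the DFT of a real vector.

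There is essentially no obstacle: the argument is a one-line manipulation once one writes the Fourier kernel explicitly. The only mild subtlety is keeping track of the one-based MATLAB-style indexing (so that the ``reflection'' index is $n-i+2$ rather than $n-i$ as it would be with zero-based indexing), and noting that the upper bound $[(n+1)/2]$ simply avoids duplicating pairs $\{i,n-i+2\}$ and covers the fixed index $i=n/2+1$ (when $n$ is even) where $\widehat{v}_i$ must be its own conjugate, i.e.\ real.
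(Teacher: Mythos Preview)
Your argument is correct: writing $(F_n)_{jk}=\omega^{(j-1)(k-1)}$ and using $\overline{\omega}=\omega^{-1}$ together with $\omega^{n}=1$ gives both $\widehat v_1\in\R$ and $\overline{\widehat v_i}=\widehat v_{n-i+2}$ exactly as you show. The paper does not supply its own proof of this lemma; it merely states the result and cites \cite{Rojo}, so your direct computation is precisely the standard verification one would expect, and there is nothing to compare against.

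One small inaccuracy in your closing remark: for even $n$ one has $[(n+1)/2]=n/2$, so the stated index range $i=2,\ldots,[(n+1)/2]$ does \emph{not} include the self-conjugate index $i=n/2+1$. Your identity of course still holds there (and shows $\widehat v_{n/2+1}\in\R$), but it lies just outside the range quoted in the lemma rather than being ``covered'' by it. This does not affect the validity of your proof of the statement as written.
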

	
	It follows that for a third-order tensor $\mathcal{A}\in\K^{\ell\times p}_{n}$, we have
	\[
	\widehat{\mathcal{A}}^{(1)}\in\R^{\ell\times p}, \quad
	{\tt conj}\left(\widehat{\mathcal{A}}^{(i)}\right)=
	\widehat{\mathcal{A}}^{(n-i+2)},\quad i=2,3,\ldots,\left[\dfrac{n+1}{2}\right].
	\]
	This shows that the t-product of two third-order tensors can be determined by evaluating 
	just about half the number of products involved in \eqref{eq p}. Algorithm \ref{alg:1}
	describes the computations.
	
	\begin{algorithm}[H]
		\caption{t-product of third-order tensors.}\label{alg:1}
		\textbf{Input}: $\mathcal{A}\in\K^{\ell\times q}_{n}$, $\mathcal{B}\in\K^{q\times p}_{n}$. \\
		\textbf{Output}: $\mathcal{C}:=\mathcal{A}\star \mathcal{B}\in\K^{\ell\times p}_{n}$.
		\begin{algorithmic}[1]
			\STATE Compute $\widehat{\mathcal{A}}={\tt fft}(\mathcal{A}, [\,], 3)$, 
			$\widehat{\mathcal{B}}={\tt fft}(\mathcal{B}, [\,], 3)$.
			\FOR  {$i=1,\ldots,  \left[\dfrac{n+1}{2}\right]$}
			\STATE  $\phantom{000}\widehat{\mathcal{C}}^{(i)}=
			\widehat{\mathcal{A}}^{(i)}\widehat{\mathcal{B}}^{(i)}$.
			\ENDFOR
			\FOR {$i=\left[\dfrac{n+1}{2}\right]+1 , \ldots, n$}
			\STATE $\phantom{000}\widehat{\mathcal{C}}^{(i)}=
			{\tt conj}\left(\widehat{\mathcal{C}}^{(n-i+2)}\right)$.
			\ENDFOR
			\STATE $\mathcal{C}={\tt ifft} \left(\widehat{\mathcal{C}},[\,],3\right)$.
		\end{algorithmic}
	\end{algorithm}
	
	The following definition is concerned with the t-product of a third-order tensor and a 
	tube.
	
	\begin{defi}
		Let $\mathcal{A}\in\K^{\ell\times p}_n$ and $\bm{b}\in\K_n$. Then 
		$\mathcal{C}:=\mathcal{A}\star\bm{b}\in\K^{\ell\times p}_n$ is obtained by applying the 
		inverse DFT along each tube of $\widehat{\mathcal{C}}$, where each frontal slice is 
		determined by the standard matrix product between each frame of $\widehat{\mathcal{A}}$ 
		and $\widehat{\bm{b}}$, i.e.,
		\[
		\widehat{\mathcal{C}}^{(i)}=\widehat{\mathcal{A}}^{(i)}\widehat{\bm{b}}^{(i)}=
		\textcolor{black}{\widehat{\bm{b}}^{(i)}\widehat{\mathcal{A}}^{(i)}},\quad i=1,2,\ldots,n.
		\]
	\end{defi}
	
	A third-order tensor $\mathcal{A}\in\K^{\ell\times p}_n$ can be written as
	\[
	\mathcal{A}=\left[\vec{\mathcal{A}}_1,\vec{\mathcal{A}}_2,\ldots,\vec{\mathcal{A}}_p
	\right],
	\]
	thus, for the tensors $\mathcal{A}\in\K^{\ell\times q}_n$ and 
	$\mathcal{B}\in\K^{q\times p}_n$, the t-product $\mathcal{A}\star \mathcal{B}$ can be 
	expressed as 
	\[
	\mathcal{A}\star\mathcal{B}=\left[\mathcal{A}\star\vec{\mathcal{B}}_1,\mathcal{A}\star
	\vec{\mathcal{B}}_2,\ldots,\mathcal{A}\star\vec{\mathcal{B}}_p\right], 
	\]
	where
	\[
	\mathcal{A}\star\overrightarrow{\mathcal{B}}_i=
	\overrightarrow{\left(\mathcal{A}\star\mathcal{B}\right)}_i, \quad i=1,2,\ldots,p.
	\]
	
	The Frobenius norm of a third-order tensor $\mathcal{A}\in\K^{\ell\times p}_n$ is given by 
	\[
	\left\Vert\mathcal{A}\right\Vert_F:=\sqrt{\sum_{i_1,i_2,i_3=1}^{\ell,p,n} 
		a_{i_1,i_2,i_3}^2},
	\]
	and the inner product of two third-order tensors of the same size
	$\mathcal{A},\mathcal{B}\in\K^{\ell\times p}_{n}$ is defined as
	\[
	\langle\mathcal{A},\mathcal{B}\rangle:=\sum_{i_1,i_2,i_3=1}^{\ell,p,n}
	a_{i_1,i_2,i_3}b_{i_1,i_2,i_3}. 
	\]
	We have the relations
	\[
	\left\Vert\mathcal{A}\right\Vert_F=\dfrac{1}{\sqrt{n}}\left\Vert\widehat{\mathcal{A}}
	\right\Vert_F,\qquad
	\langle\mathcal{A},\mathcal{B}\rangle=\dfrac{1}{n}\langle\widehat{\mathcal{A}},
	\widehat{\mathcal{B}}\rangle.
	\]
	
	We recall for later use the definitions of some special tensors and operations:
	\begin{itemize}
		\item The identity tensor $\mathcal{I}_\ell\in \K^{\ell\times\ell}_n$ is the tensor whose 
		first frontal slice is the identity matrix and all other slices have zero entries only. 
		\item The transpose of a real third-order tensor, $\mathcal{A}\in\K^{\ell\times p}_n$,
		denoted by $\mathcal{A}^H\in\K^{p\times\ell}_n$, is the tensor obtained by first 
		transposing each one of the frontal slices of $\mathcal{A}$, and then reversing the order
		of the transposed frontal slices $2$ through $n$; see \cite{kilmer}. Let the third-order
		tensors ${\mathcal A}$ and $\mathcal{B}$ be such that the products 
		${\mathcal A}\star\mathcal{B}$ and $\mathcal{B}^H\star\mathcal{A}^H$ are defined. Then, 
		similarly to the matrix transpose, the tensor transpose satisfies 
		$({\mathcal A}\star\mathcal{B})^H=\mathcal{B}^H\star\mathcal{A}^H$.
		\item A tensor $\mathcal{Q}\in\K^{\ell\times\ell}_n$ is said to be orthogonal if 
		and only if 
		\[
		\mathcal{Q}^H\star\mathcal{Q}=\mathcal{Q}\star\mathcal{Q}^H=\mathcal{I}_\ell.
		\]
		\item A square third-order tensor $\mathcal{A}\in\K^{\ell\times\ell}_n$ is invertible if 
		there is a third-order tensor $\mathcal{B}\in\K^{\ell\times\ell}_n$ such that
		\[
		\mathcal{A}\star\mathcal{B}=\mathcal{I}_\ell,\quad 
		\mathcal{B}\star\mathcal{A}=\mathcal{I}_\ell.
		\]
		In this case $\mathcal{B}$ is said to be the inverse of $\mathcal{A}$, and is denoted by
		$\mathcal{A}^{-1}$.
	\end{itemize}
	
	\begin{defi}(\cite{kilmer2013third})
		Let $\vec{\mathcal{A}}_i\in\K^{\ell}_n$ for $i=1,2,\ldots,p$ be lateral slices of the
		tensor $\mathcal{A}\in\K_n^{\ell\times p}$. A t-linear combination of these slices
		is defined as
		\[
		\vec{\mathcal{A}}_1\star\bm{b}_1+\vec{\mathcal{A}}_2\star\bm{b}_2+
		\ldots+\vec{\mathcal{A}}_p\star\bm{b}_p,  
		\]
		where the $\bm{b}_i$ for $i=1,2,\ldots,p$ are tubes in $\K_n$. Moreover,	
		\[
		{\tt span}\left\{\vec{\mathcal{A}}_1,\vec{\mathcal{A}}_2,\ldots,\vec{\mathcal{A}}_p\right\}=
		\left\{\sum_{i=1}^{p}\vec{\mathcal{A}}_i\star\bm{b}_i :~~ \bm{b}_i\in\K_n,
		~~i=1,2,\ldots,p\right\}.
		\]
	\end{defi}
	
	The tensor singular value decomposition (t-SVD) associated with the t-product, introduced
	by Kilmer and Martin \cite{kilmer}, generalizes the classical SVD of a matrix. It is 
	described in the next theorem.
	
	\begin{theo}(\cite{kilmer})
		Let $\mathcal{A}\in\K^{\ell\times p}_{n}$ be a third-order tensor. Then it can be 
		represented as the t-product of three third-order tensors,
		\begin{equation}\label{tsvd}
			\mathcal{A}=\mathcal{U}\star \mathcal{S}\star \mathcal{V}^H,
		\end{equation}
		where $\mathcal{U}\in\K^{\ell\times\ell}_n$ and $\mathcal{V}\in\K^{p\times p}_n$ are 
		orthogonal tensors, and $\mathcal{S}\in\K^{\ell\times p}_n$ is an f-diagonal tensor, i.e., each frontal slice  of the DFT of $\mathcal{S}$ is a diagonal matrix.
	\end{theo}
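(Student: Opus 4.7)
The plan is to leverage the fact, already established in the excerpt via equation \eqref{eq p}, that the t-product acts frontal-slice-wise in the Fourier domain. Concretely, I would pass to $\widehat{\mathcal{A}}={\tt fft}(\mathcal{A},[\,],3)$, factor each frontal slice $\widehat{\mathcal{A}}^{(i)}\in\C^{\ell\times p}$ by the ordinary matrix SVD, and then pull the result back via the inverse FFT. If this is done with care, the three factors recombine via the t-product to give the desired decomposition \eqref{tsvd}.

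First I would compute, for each $i=1,2,\ldots,n$, a matrix SVD
\[
\widehat{\mathcal{A}}^{(i)}=\widehat{U}^{(i)}\,\widehat{S}^{(i)}\,(\widehat{V}^{(i)})^H,
\]
with $\widehat{U}^{(i)}$ and $\widehat{V}^{(i)}$ unitary and $\widehat{S}^{(i)}$ diagonal with nonnegative entries. Stacking these as frontal slices defines tensors $\widehat{\mathcal{U}}$, $\widehat{\mathcal{S}}$, $\widehat{\mathcal{V}}$, and I would then set $\mathcal{U}={\tt ifft}(\widehat{\mathcal{U}},[\,],3)$, $\mathcal{S}={\tt ifft}(\widehat{\mathcal{S}},[\,],3)$, $\mathcal{V}={\tt ifft}(\widehat{\mathcal{V}},[\,],3)$. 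By the Fourier characterization \eqref{eq p} of the t-product applied slice by slice, $\mathcal{U}\star\mathcal{S}\star\mathcal{V}^H$ reproduces $\mathcal{A}$. Orthogonality of $\mathcal{U}$ and $\mathcal{V}$ follows because $\widehat{U}^{(i)}$ and $\widehat{V}^{(i)}$ are unitary for every $i$, which in the Fourier domain is exactly the statement that $\widehat{\mathcal{U}}^H\star\widehat{\mathcal{U}}$ and $\widehat{\mathcal{V}}^H\star\widehat{\mathcal{V}}$ equal the Fourier transform of $\mathcal{I}_\ell$ and $\mathcal{I}_p$; and $\mathcal{S}$ is f-diagonal by construction since every $\widehat{S}^{(i)}$ is diagonal.

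The main obstacle, and the only delicate point, is ensuring that $\mathcal{U},\mathcal{S},\mathcal{V}$ come out \emph{real} after the inverse FFT, since $\mathcal{A}$ is real. The conjugate symmetry recalled in the lemma above forces $\widehat{\mathcal{A}}^{(1)}\in\R^{\ell\times p}$ and ${\tt conj}(\widehat{\mathcal{A}}^{(i)})=\widehat{\mathcal{A}}^{(n-i+2)}$ for $i=2,\ldots,[(n+1)/2]$. I would exploit this by choosing real SVDs for the self-conjugate slices (indices $1$ and, if $n$ is even, $n/2+1$), and for the remaining indices $i\in\{2,\ldots,[(n+1)/2]\}$ I would compute the SVD only once and then \emph{define}
\[
\widehat{U}^{(n-i+2)}:={\tt conj}(\widehat{U}^{(i)}),\quad
\widehat{S}^{(n-i+2)}:=\widehat{S}^{(i)},\quad
\widehat{V}^{(n-i+2)}:={\tt conj}(\widehat{V}^{(i)}).
\]
Taking complex conjugates of the SVD of $\widehat{\mathcal{A}}^{(i)}$ then yields a valid SVD of $\widehat{\mathcal{A}}^{(n-i+2)}$, and the three tensors $\widehat{\mathcal{U}},\widehat{\mathcal{S}},\widehat{\mathcal{V}}$ inherit the conjugate symmetry of the lemma. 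Consequently their inverse FFTs along the tubes produce real tensors, completing the construction.

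Finally, I would remark that this also explains the structure encoded in Algorithm \ref{alg:1}: the same symmetry that saves half the work in evaluating the t-product saves half the work in computing the t-SVD, and uniqueness of the singular values (up to ordering) within each frontal slice guarantees that the singular tubes of $\mathcal{S}$ are well defined independently of the ambiguities in the individual factorizations.
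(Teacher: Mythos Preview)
Your argument is correct and matches the paper's treatment: the paper does not give an explicit proof of this theorem but cites it from \cite{kilmer} and immediately presents Algorithm~\ref{alg:2}, whose steps are precisely the construction you describe (FFT, slice-wise matrix SVD for $i=1,\ldots,[(n+1)/2]$, conjugate symmetry for the remaining slices, inverse FFT). Your handling of the realness issue via conjugate pairing is exactly what lines 5--7 of Algorithm~\ref{alg:2} encode, so there is nothing to add.
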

	
	Algorithm \ref{alg:2} summarizes the computation of the t-SVD of a third-order tensor with
	the aid of the FFT. 
	
	\begin{algorithm}[H]
		\caption{The t-SVD of a third-order tensor.}\label{alg:2}
		\textbf{Input}: $\mathcal{A}\in\K^{\ell\times p}_{n}$. \\
		\textbf{Output}: $\mathcal{U}\in\K^{\ell\times\ell}_n,\; 
		\mathcal{S}\in\K^{\ell\times p}_n,\; \mathcal{V}\in\K^{p\times p}_n$.
		\begin{algorithmic}[1]
			\STATE $\widehat{\mathcal{A}}={\tt fft}(\mathcal{A}, [\,], 3).$
			\FOR {$i=1, \ldots,\left[\dfrac{n+1}{2}\right]$}
			\STATE $\phantom{000}[\widehat{\mathcal{U}}^{(i)},\widehat{\mathcal{S}}^{(i)},\widehat{\mathcal{V}}^{(i)}]=
			{\tt svd}(\widehat{\mathcal{A}}^{(i)})$.
			\ENDFOR
			\FOR{$i=1,\ldots,\left[\dfrac{n+1}{2}\right]+1$}
			\STATE $\phantom{000}\widehat{\mathcal{U}}^{(i)}=
			{\tt conj}\left(\widehat{\mathcal{U}}^{(n-i+2)}\right)$, 
			$\widehat{\mathcal{S}}^{(i)}={\tt conj}\left(\widehat{\mathcal{S}}^{(n-i+2)}\right)$, and 
			$\widehat{\mathcal{V}}^{(i)}={\tt conj}\left(\widehat{\mathcal{V}}^{(n-i+2)}\right)$.
			\ENDFOR
			\STATE Compute $\mathcal{U}={\tt ifft}(\widehat{\mathcal{U}},[\,], 3)$, 
			$\mathcal{S}={\tt ifft}(\widehat{\mathcal{S}},[\,], 3)$, and 
			$\mathcal{V}={\tt ifft}(\widehat{\mathcal{V}},[\,], 3)$.
		\end{algorithmic}
	\end{algorithm}
	
	The factorization \eqref{tsvd} can be expressed as 
	\[
	\mathcal{A}=\mathcal{U}\star\mathcal{S}\star\mathcal{V}^H=
	\sum_{i=1}^{\min\{\ell,p\}}\vec{\mathcal{U}}_i\star\bm{s}_i\star\vec{\mathcal{V}}_i^H, 
	\]
	where the $\bm{s}_i=\mathcal{S}(i,i,:)$ are singular tubes, and 
	$\vec{\mathcal{U}}_i=\mathcal{U}(:,i,:)$ and $\vec{\mathcal{V}}_i=\mathcal{U}(:,i,:)$ are
	right and left lateral tensor singular slices, respectively, for 
	$i=1,2,\ldots,\min(\ell,p)$. The triplets 
	$\{\bm{s}_i,\vec{\mathcal{U}}_i,\vec{\mathcal{V}}_i\}_{i=1:\min(\ell,p)}$ will be 
	referred to as singular triplets of the tensor $\mathcal{A}$. The singular tubes are 
	ordered so that their norms $\sigma_i=\Vert\bm{s}_i\Vert_F$ are decreasing with $i$, i.e., 
	\[
	\sigma_1\geq\sigma_2\geq\ldots\geq\sigma_{\min(\ell,p)} \geq 0.
	\]
	Note that we also have the relations 
	\[
	\mathcal{A}\star\vec{\mathcal{V}}_i=\vec{\mathcal{U}}_i\star\bm{s}_i,\quad 
	\mathcal{A}^H\star\vec{\mathcal{U}}_i=\vec{\mathcal{V}}_i\star\bm{s}_i,\quad
	i=1,2,\ldots,\min\{\ell,p\}.
	\]
	We remark that the latter relations have to be modified if ${\mathcal A}$ has 
	complex-valued entries.
	
	We note for future reference that
	\begin{equation}\label{new 2.21}
		\mathcal{S}(i,i,1)=\sum_{j=1}^{n}\dfrac{1}{n}\widehat{\mathcal{S}}(i,i,j).
	\end{equation}
	
	In the following, we will need the notion of rank of a third-order tensor. 
	
	\begin{defi}
		Let $\mathcal{A}\in\K^{\ell\times p}_n$ be a third-order tensor. Then its tubal rank is 
		defined as 
		\[
		{\tt rank}_t\left(\mathcal{A}\right)={\tt card}\left\{\sigma_i\neq 0,~~
		i=1,2,\ldots,\min\{\ell,p\} \right\},
		\]
		where $\sigma_i$ is the norm of the singular tube $\bm{s}_i$ of $\mathcal{A}$ and
		${\tt card}$ stands for the cardinality.
	\end{defi}
	
	The next result generalizes the Eckart-Young theorem for matrices to third-order tensors.
	It is important in the context of data compression.
	
	\begin{theo}(\cite{kilmer-compression,kilmer})
		Let the t-SVD of a third-order tensor $\mathcal{A}\in\K^{\ell\times p}_n$ be given by
		$\mathcal{A}=\mathcal{U}\star\mathcal{S}\star \mathcal{V}^H$. For 
		$1\leq k\leq\min\{\ell,p\}$, define the truncated t-SVD by 
		\begin{equation*}
			\mathcal{A}_k=\sum_{i=1}^{k}\vec{\mathcal{U}}_i\star\bm{s}_i\star\vec{\mathcal{V}}_i^H.
		\end{equation*}
		Then 
		\[
		\mathcal{A}_k=\underset{\widetilde{\mathcal{A}}\in\mathbb{M}}{\arg\min}
		\left\Vert\mathcal{A}-\widetilde{\mathcal{A}}\right\Vert_F.
		\]
		Where $\mathbb{M}$ is the set given by $\mathbb{M}=\{\mathcal{X}\star \mathcal{Y}; \; \text{with}\; \mathcal{X}\in \K^{l\times k}_n,\; \mathcal{Y}\in \K^{k\times p}_n \}$.
	\end{theo}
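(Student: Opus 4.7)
The plan is to pass to the Fourier domain and reduce the problem to $n$ independent matrix Eckart--Young problems. The two key facts from the excerpt that I will lean on are the isometry-type identity $\|\mathcal{A}\|_F = \frac{1}{\sqrt{n}}\|\widehat{\mathcal{A}}\|_F$ and the fact that the t-product acts slice-wise in the Fourier domain, i.e.\ $\widehat{\mathcal{X}\star\mathcal{Y}}^{(i)} = \widehat{\mathcal{X}}^{(i)}\widehat{\mathcal{Y}}^{(i)}$ for $i=1,\dots,n$ (equation~\eqref{eq p}).

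First I would rewrite the objective. For any candidate $\widetilde{\mathcal{A}} = \mathcal{X}\star\mathcal{Y}$ with $\mathcal{X}\in\K^{\ell\times k}_n$ and $\mathcal{Y}\in\K^{k\times p}_n$, the Fourier isometry combined with the slice-wise product formula gives
\[
\left\Vert\mathcal{A}-\mathcal{X}\star\mathcal{Y}\right\Vert_F^2
= \frac{1}{n}\sum_{i=1}^{n}\left\Vert\widehat{\mathcal{A}}^{(i)} - \widehat{\mathcal{X}}^{(i)}\widehat{\mathcal{Y}}^{(i)}\right\Vert_F^2.
\]
Since each $\widehat{\mathcal{X}}^{(i)} \in \C^{\ell\times k}$ and $\widehat{\mathcal{Y}}^{(i)} \in \C^{k\times p}$, the product $\widehat{\mathcal{X}}^{(i)}\widehat{\mathcal{Y}}^{(i)}$ is a matrix of rank at most $k$. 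Conversely, by taking FFT of any $\mathcal{X},\mathcal{Y}$ of the stated sizes we can realize any choice of factorized frontal slices $\widehat{X}^{(i)}\widehat{Y}^{(i)}$; hence the admissible set $\mathbb{M}$ is in one-to-one correspondence with $n$-tuples of matrices of rank at most $k$ (with appropriate conjugate-symmetry so that the inverse FFT is real, which is automatic once $\widehat{\mathcal{A}}^{(i)}$ satisfies the same symmetry).

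Next I would decouple the $n$ sub-problems and apply the classical matrix Eckart--Young theorem to each frontal slice. For each fixed $i$, the minimizer of $\|\widehat{\mathcal{A}}^{(i)} - M_i\|_F$ over matrices $M_i$ of rank at most $k$ is the truncated SVD of $\widehat{\mathcal{A}}^{(i)}$, namely $\sum_{j=1}^{k} \widehat{\mathcal{U}}^{(i)}(:,j)\,\widehat{\mathcal{S}}^{(i)}(j,j)\,\widehat{\mathcal{V}}^{(i)}(:,j)^H$. Summing these per-slice optimal values gives the lower bound on $\|\mathcal{A}-\widetilde{\mathcal{A}}\|_F$ over all of $\mathbb{M}$.

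Finally I would identify this slice-wise optimum with $\mathcal{A}_k$. Because Algorithm~\ref{alg:2} builds $\mathcal{U},\mathcal{S},\mathcal{V}$ precisely by taking the SVDs of the $\widehat{\mathcal{A}}^{(i)}$, the inverse FFT of the per-slice truncations is exactly $\mathcal{A}_k = \sum_{i=1}^{k}\vec{\mathcal{U}}_i\star\bm{s}_i\star\vec{\mathcal{V}}_i^H$, so $\mathcal{A}_k\in\mathbb{M}$ attains the lower bound. The main obstacle (really the only technical point worth being careful about) is the equivalence between the set $\mathbb{M}$ and $n$-tuples of rank-$\le k$ matrices in the Fourier domain: one must check that the conjugate-symmetry constraint on $\widehat{\mathcal{X}},\widehat{\mathcal{Y}}$ required to keep $\mathcal{X},\mathcal{Y}$ real does not shrink the feasible set, which follows from the symmetry of the truncated SVDs of the $\widehat{\mathcal{A}}^{(i)}$ themselves (a consequence of $\mathcal{A}$ being real and Algorithm~\ref{alg:2} being consistent with that symmetry).
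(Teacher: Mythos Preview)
The paper does not prove this theorem; it is quoted without proof and attributed to \cite{kilmer-compression,kilmer}. Your argument is correct and is essentially the standard proof found in those sources: pass to the Fourier domain via the isometry $\|\mathcal{A}\|_F=\frac{1}{\sqrt{n}}\|\widehat{\mathcal{A}}\|_F$, use \eqref{eq p} to decouple the objective into $n$ independent matrix low-rank approximation problems, apply the classical Eckart--Young theorem to each frontal slice $\widehat{\mathcal{A}}^{(i)}$, and observe that the truncated t-SVD built by Algorithm~\ref{alg:2} attains the resulting lower bound.

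One small overstatement worth tightening: you do not actually need a one-to-one correspondence between $\mathbb{M}$ and arbitrary $n$-tuples of rank-$\le k$ matrices. The argument only requires two inclusions: (i) for every $\mathcal{X}\star\mathcal{Y}\in\mathbb{M}$ each Fourier slice $\widehat{\mathcal{X}}^{(i)}\widehat{\mathcal{Y}}^{(i)}$ has rank at most $k$, which gives the lower bound, and (ii) $\mathcal{A}_k\in\mathbb{M}$, which gives achievability. Both are immediate, and this sidesteps the conjugate-symmetry issue you flagged, since you never have to realize an \emph{arbitrary} $n$-tuple of rank-$\le k$ matrices as the Fourier transform of a real factorization; you only need the particular tuple coming from the slice-wise truncated SVDs, whose conjugate symmetry is inherited from that of $\widehat{\mathcal{A}}$ exactly as you note.
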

	
	The matrix QR factorization also can be generalized to tensors.
	
	\begin{theo}(\cite{kilmer})
		Let $\mathcal{A}\in\K^{\ell\times p}_n$. Then $\mathcal{A}$ can be factored as 
		\begin{equation}\label{tQR}
			\mathcal{A}=\mathcal{Q}\star \mathcal{R},
		\end{equation}
		where $\mathcal{Q}\in\K^{\ell\times\ell}_n$ is an orthogonal tensor and 
		$\mathcal{R}\in\K^{\ell\times p}_n$ is an f-upper triangular tensor, i.e., each frontal 
		slice of the DFT of $\mathcal{R}$ is an upper triangular matrix. The factorization 
		\eqref{tQR} is referred to as the t-QR factorization of $\mathcal{A}$.
	\end{theo}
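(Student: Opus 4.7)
The plan is to mirror the construction used for the t-SVD in Algorithm \ref{alg:2}: transfer the problem into the Fourier domain, perform ordinary matrix QR factorizations slice-by-slice, and transform back. Concretely, I would set $\widehat{\mathcal{A}} = {\tt fft}(\mathcal{A},[\,],3)$ and, for each $i=1,\ldots,n$, compute a standard (complex) matrix QR factorization $\widehat{\mathcal{A}}^{(i)} = \widehat{\mathcal{Q}}^{(i)}\widehat{\mathcal{R}}^{(i)}$, with $\widehat{\mathcal{Q}}^{(i)}\in\C^{\ell\times\ell}$ unitary and $\widehat{\mathcal{R}}^{(i)}\in\C^{\ell\times p}$ upper triangular. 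Stacking these slices into tensors $\widehat{\mathcal{Q}}$ and $\widehat{\mathcal{R}}$ and applying the inverse FFT along the third mode then produces the candidates $\mathcal{Q} = {\tt ifft}(\widehat{\mathcal{Q}},[\,],3)$ and $\mathcal{R} = {\tt ifft}(\widehat{\mathcal{R}},[\,],3)$.

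The verification splits into three checks, each reducing to a pointwise assertion in the Fourier domain. First, the identity $\mathcal{A} = \mathcal{Q}\star\mathcal{R}$ follows immediately from the slicewise multiplication rule \eqref{eq p}, since by construction $\widehat{\mathcal{A}}^{(i)} = \widehat{\mathcal{Q}}^{(i)}\widehat{\mathcal{R}}^{(i)}$ for every $i$. Second, $\mathcal{Q}$ is orthogonal because $(\widehat{\mathcal{Q}}^{(i)})^H\widehat{\mathcal{Q}}^{(i)} = \widehat{\mathcal{Q}}^{(i)}(\widehat{\mathcal{Q}}^{(i)})^H = I_\ell$ holds for each $i$; applying the block-diagonalization of ${\tt bcirc}$ then yields $\mathcal{Q}^H\star\mathcal{Q} = \mathcal{Q}\star\mathcal{Q}^H = \mathcal{I}_\ell$. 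Third, $\mathcal{R}$ is f-upper triangular directly from the construction, since each frontal slice of $\widehat{\mathcal{R}}$ is upper triangular by design.

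The main obstacle will be ensuring that $\mathcal{Q}$ and $\mathcal{R}$ end up as \emph{real} tensors, given that the intermediate factorizations live over $\C$. The remedy is the conjugate-symmetry structure of the FFT of a real tensor: it suffices to compute the QR decompositions only for $i=1,\ldots,\left[\dfrac{n+1}{2}\right]$ and to define the remaining slices by $\widehat{\mathcal{Q}}^{(i)} = {\tt conj}\bigl(\widehat{\mathcal{Q}}^{(n-i+2)}\bigr)$ and $\widehat{\mathcal{R}}^{(i)} = {\tt conj}\bigl(\widehat{\mathcal{R}}^{(n-i+2)}\bigr)$, exactly as in Algorithm \ref{alg:2}. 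To make this self-consistent with an honest QR factorization of $\widehat{\mathcal{A}}^{(n-i+2)} = {\tt conj}\bigl(\widehat{\mathcal{A}}^{(i)}\bigr)$, I would fix the standard normalization that the diagonal entries of each $\widehat{\mathcal{R}}^{(i)}$ are nonnegative real, which removes the phase ambiguity in the complex QR. With this choice, the conjugate-symmetric construction is consistent, the inverse FFTs return real tensors, and the three verifications above go through without modification.
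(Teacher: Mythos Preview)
Your proposal is correct and is exactly the construction the paper has in mind: the paper does not supply a formal proof of this theorem (it is cited from \cite{kilmer}) but instead immediately presents Algorithm~\ref{alg:3}, which is precisely your slice-by-slice QR in the Fourier domain followed by conjugate-symmetric completion and an inverse FFT. Your three verification steps and the remark on enforcing nonnegative diagonals of $\widehat{\mathcal{R}}^{(i)}$ to guarantee a real output are the natural justification of that algorithm, so there is nothing to add.
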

	
	Algorithm \ref{alg:3} summarizes the computation of the t-QR factorization \eqref{tQR}.
	The function ${\tt qr}$ in line 3 of the algorithm computes a QR factorization of the
	matrix $\widehat{\mathcal{A}}^{(i)}\in\R^{\ell\times p}$; thus 
	$\widehat{\mathcal{A}}^{(i)}=\widehat{\mathcal{Q}}^{(i)}\widehat{\mathcal{R}}^{(i)}$, 
	where the matrix $\widehat{\mathcal{Q}}^{(i)}\in\R^{\ell\times\ell}$ is orthogonal and 
	the matrix $\widehat{\mathcal{R}}^{(i)}\in\R^{\ell\times p}$ has an upper triangular
	leading principal submatrix of order $\ell$.
	
	\begin{algorithm}[H]
		\caption{t-QR factorization of a third-order tensor.}\label{alg:3}
		\textbf{Input}: $\mathcal{A}\in\K^{\ell\times p}_n$. \\
		\textbf{Output}: $\mathcal{Q}\in\K^{\ell\times\ell}_n,\; 
		\mathcal{R}\in\K^{\ell\times p}_n$.
		\begin{algorithmic}[1]
			\STATE $\widehat{\mathcal{A}}={\tt fft}(\mathcal{A}, [\,], 3).$
			\FOR  {$i=1\, \ldots, \left[\dfrac{n+1}{2}\right]$}
			\STATE $\phantom{000}[\widehat{\mathcal{Q}}^{(i)},
			\widehat{\mathcal{R}}^{(i)}]={\tt qr}(\widehat{\mathcal{A}}^{(i)})$.
			\ENDFOR
			\FOR {$i=\left[\dfrac{n+1}{2}\right]+1 \ldots,n$}
			\STATE $\phantom{000}\widehat{\mathcal{Q}}^{(i)}={\tt conj}
			\left(\widehat{\mathcal{Q}}^{(n-i+2)}\right)$ and  
			$\widehat{\mathcal{R}}^{(i)}={\tt conj}\left(\widehat{\mathcal{R}}^{(n-i+2)}\right)$.
			\ENDFOR
			\STATE Compute $\mathcal{Q}={\tt ifft}(\widehat{\mathcal{Q}},[\,], 3)$ and  
			$\mathcal{R}={\tt ifft}(\widehat{\mathcal{R}},[\,], 3)$.
		\end{algorithmic}
	\end{algorithm}
	
	Following Kilmer et al. \cite{kilmer2013third}, we define orthogonality of lateral tensor 
	slices. Let $\vec{\mathcal{X}}$ and $\vec{\mathcal{Y}}$ be two lateral 
	tensor slices in $\K^{\ell}_{n}$ and define the inner product of these slices as
	\[
	\left<\vec{\mathcal{X}},\vec{\mathcal{Y}}\right>:=\vec{\mathcal{X}}^H\star
	\vec{\mathcal{Y}}\in\K_{n}.
	\]
	The lateral slices in the set 
	\begin{equation}\label{setslices}
		\left\{\vec{\mathcal{X}}_1,\vec{\mathcal{X}}_2,\ldots,\vec{\mathcal{X}}_p\right\}, 
	\end{equation}
	with $p\geq 2$, are said to be \emph{orthogonal} if 
	\[
	\left<\vec{\mathcal{X}}_i, \vec{\mathcal{X}}_j\right>=\left\lbrace
	\begin{array}{cc}
		\alpha_i \textbf{e}_1 & \mbox{if $i=j$},\\
		\textbf{0} & \mbox{if $i\ne j$},
	\end{array}
	\right.
	\]
	where $\textbf{e}_1$ is the tube in $\K_n$, whose its first element is $1$ and the 
	remaining elements vanish, and the $\alpha_i$, $i=1,2,\ldots,p$, are nonvanishing scalars.
	Furthermore, if $\alpha_i=1$ for all $i=1,2,\ldots,p$, then the set \eqref{setslices} is 
	said to be \emph{orthonormal}.
	
	Following \cite{kilmer2013third}, we observe that any lateral slice 
	$\vec{\mathcal{X}}\in\K^{\ell}_{n}$ can be normalized as 
	\begin{equation}\label{normalx}
		\vec{\mathcal{X}}=\vec{\mathcal{Y}}\star\textbf{a}
	\end{equation}
	with $\vec{\mathcal{Y}}\in\K^{\ell}_{n}$, $\left\Vert\vec{\mathcal{Y}}\right\Vert=1$, and
	$\textbf{a}\in\K_n$. Here the tensor norm is defined as
	\[
	\left\Vert\vec{\mathcal{Y}}\right\Vert=
	\dfrac{\left\Vert\left<\vec{\mathcal{Y}},\vec{\mathcal{Y}}\right>\right\Vert_F}
	{\left\Vert \vec{\mathcal{Y}}\right\Vert_F}.
	\]
	Note that $\vec{\mathcal{Y}}$ has unit norm if and only if 
	$\left<\vec{\mathcal{Y}},\vec{\mathcal{Y}}\right>=\bm{e}_1$; see \cite{kilmer2013third} for 
	more detail. Algorithm \ref{alg:4} summarizes the normalization process. The MATLAB 
	function ${\tt randn}$ in the algorithm generates a vector in $\R^\ell$ with normally
	distributed pseudorandom entries with mean zero and variance one.
	
	\begin{algorithm}[H]
		\caption{{\tt Normalize}($\vec{\mathcal{X}}$).}\label{alg:4}
		\textbf{Input}: $\vec{\mathcal{X}}\in \mathbb{K}^{\ell}_{n}$. \\
		\textbf{Output}: $\vec{\mathcal{Y}}\in \mathbb{K}^{\ell}_{n}$ of unit norm and 
		$\textbf{a}\in\K_n$ that satisfy \eqref{normalx}.\\
		\begin{algorithmic}[1]
			\STATE $\vec{\widehat{\mathcal{Y}}}={\tt fft}(\vec{\mathcal{X}}, [\,], 3)$.
			\FOR {$i=1,\ldots, \left[\dfrac{n+1}{2}\right]$}
			\STATE $\widehat{\textbf{a}}^{(i)}=\left\Vert\vec{\widehat{\mathcal{Y}}}^{(i)}\right\Vert_F$.
			\IF {$\widehat{\textbf{a}}^{(i)}> 0$}
			\STATE $\vec{\widehat{\mathcal{Y}}}^{(i)}=
			\dfrac{\vec{\widehat{\mathcal{Y}}}^{(i)}}{\widehat{\textbf{a}}^{(i)}}$ 
			\ELSE
			\STATE $\vec{\widehat{\mathcal{Y}}}^{(i)}={\tt randn}(\ell,1);\; \textbf{b}^{(i)}=
			\left\Vert \vec{\widehat{\mathcal{Y}}}^{(i)}\right\Vert_F$, and 
			$\vec{\widehat{\mathcal{Y}}}^{(i)}=\dfrac{\vec{\widehat{\mathcal{Y}}}^{(i)}}
			{\textbf{b}^{(i)}}$. 
			\ENDIF
			\ENDFOR
			\FOR {$i=\left[\dfrac{n+1}{2}\right]+1, \ldots, n$}
			\STATE  $\vec{\widehat{\mathcal{Y}}}^{(i)}=
			{\tt conj}\left(\vec{\widehat{\mathcal{Y}}}^{(n-i+2)}\right)$, 
			$\widehat{\textbf{a}}^{(i)}={\tt conj}\left(\widehat{\textbf{a}}^{(n-i+2)}\right)$.
			\ENDFOR
			\STATE  $\vec{\widehat{\mathcal{Y}}}={\tt ifft}(\vec{\widehat{\mathcal{Y}}},[\,],3)$,
			$\textbf{a}={\tt ifft}(\widehat{\textbf{a}},[\,],3)$.
		\end{algorithmic}
	\end{algorithm}

	\section{Tensor Lanczos bidiagonalization for computing the largest and smallest 
		singular triplets}\label{sec4}
	This section describes the Lanczos bidiagonalization process for tensors using the 
	t-product, and discusses how approximations of the largest and smallest singular triplets 
	of a large third-order tensor $\mathcal{A}\in \K^{\ell\times p}_n$ can be computed.

	\subsection{The tensor Lanczos bidiagonalization algorithm}
	The Lanczos bidiagonalization process was introduced for matrices by Golub and Kahan 
	\cite{golub1965calculating} and therefore sometimes is referred to as the Golub-Kahan
	bidiagonalization process. For a matrix $A\in\R^{\ell\times p}$, this process is closely 
	related to symmetric Lanczos process applied to the real symmetric matrices $AA^T$ and $A^T A$,
	or alternatively to the symmetric matrix
	\[
	\begin{bmatrix}
		0 & A\\
		A^T & 0
	\end{bmatrix}. 
	\]
	
	Lanczos bidiagonalization algorithms have been applied to solve numerous problems such as 
	large-scale least squares problem \cite{paigesaunders}, the approximation of the largest or 
	smallest singular triplets of a large matrix 
	\cite{baglama2005augmented,hochstenbach2001jacobi,kokiopoulou2004computing}, and in 
	Tikhonov regularization of large linear discrete ill-posed problems; see, e.g., \cite{Bj,CR}.
	We note that the bidiagonalization method described in 
	\cite{paigesaunders} and applied in \cite{Bj,CR} reduces a large matrix $A$ to a small lower
	bidiagonal matrix, while in \cite{baglama2005augmented} the matrix $A$ is reduced to a
	small upper bidiagonal matrix. We will review the latter approach.
	
	Application of $m\ll\min\{\ell,p\}$ steps of the Lanczos bidiagonalization process to the
	matrix $A\in\R^{\ell\times p}$ with the initial unit vector $p_1\in\R^\ell$ generically 
	produces two matrices 
	\begin{equation*}
		P_m=\left[p_1,p_2,\ldots,p_m\right]\in\R^{p\times m},\quad 
		Q_m=\left[q_1,q_2,\ldots,q_m\right]\in\R^{\ell\times m}.
	\end{equation*}
	The columns of $P_m$ and $Q_m$ form orthonormal bases for the Krylov subspaces 
	\begin{eqnarray*}
		\mathrsfs{K}_m\left(A^T A, p_1\right)&=&{\tt span}\{p_1,A^TAp_1,\left(A^TA\right)^2p_1,
		\ldots,\left(A^TA\right)^{m-1}p_1 \},\\
		\mathrsfs{K}_m\left(AA^T,q_1\right)&=&{\tt span}\{q_1,AA^Tq_1,\left(AA^T\right)^2q_1,
		\ldots,\left(AA^T\right)^{m-1}q_1\},
	\end{eqnarray*}
	respectively, where $q_1=Ap_1/\|Ap_1\|_2$. A matrix interpretation of the recursion 
	relations of the Lanczos process gives the matrix relations 
	\begin{eqnarray}
		&AP_m=& Q_m B_m, \label{LBm 1}\\
		&A^T Q_m =& P_m B_m^T + \beta_m p_{m+1} e_m^T, \label{LBm 2}
	\end{eqnarray}
	where $e_m=[0,\ldots,0,1]^T\in\R^m$, $\beta_m\geq 0$ is a scalar, and $p_{m+1}\in\R^p$. 
	The matrix $B_m\in\R^{m\times m}$ is upper bidiagonal and satisfies $B_m=Q_m^TAP_m$.
	
	When considering bidiagonalization of a third-order tensor $\mathcal{A}$ using the 
	t-product, the scalars and the columns of the matrices $P_m$ and $Q_m$ in the matrix
	decompositions \eqref{LBm 1} and \eqref{LBm 2} become tubes and lateral slices,
	respectively, in 
	the decompositions determined by the tensor Lanczos bidiagonalization process. The
	application of $m$ steps of tensor Lanczos bidiagonalization to the third-order 
	tensor $\mathcal{A}\in\K^{\ell\times p}_n$ generically computes two tensors 
	\[
	\mathcal{P}_m=\left[\vec{\mathcal{P}}_1,\vec{\mathcal{P}}_2,\dots,
	\vec{\mathcal{P}}_m\right]\in\K^{p\times m}_n \mbox{~~and~~}
	\mathcal{Q}_m=\left[\vec{\mathcal{Q}}_1,\vec{\mathcal{Q}}_2,\dots,
	\vec{\mathcal{Q}}_m\right]\in \K^{\ell\times m}_n,
	\]
	whose lateral slices form bases for the tensor Krylov subspaces 
	$\mathrsfs{K}_m\left(\mathcal{A}^H\star\mathcal{A},\vec{\mathcal{P}}_1\right)$ and 
	\hfill\break 
	$\mathrsfs{K}_m\left(\mathcal{A}\star\mathcal{A}^H,\vec{\mathcal{Q}}_1\right)$, 
	respectively. They are defined by
	\begin{eqnarray*}
		\mathrsfs{K}_m\left(\mathcal{A}^H\star\mathcal{A},\vec{\mathcal{P}}_1\right)&=&
		{\tt span}\{\vec{\mathcal{P}}_1,\left(\mathcal{A}^H \star\mathcal{A}\right)\star 
		\vec{\mathcal{P}}_1,\ldots,\left(\mathcal{A}^H\star\mathcal{A}\right)^{m-1}\star 
		\vec{\mathcal{P}}_1 \}, \\
		\mathrsfs{K}_m\left(\mathcal{A}\star\mathcal{A}^H,\vec{\mathcal{Q}}_1\right)&=&
		{\tt span}\{\vec{\mathcal{Q}}_1,\left(\mathcal{A}\star\mathcal{A}^H\right)\star 
		\vec{\mathcal{Q}}_1,\ldots,\left(\mathcal{A}\star\mathcal{A}^H\right)^{m-1}\star
		\vec{\mathcal{Q}}_1\},
	\end{eqnarray*}
	where $\vec{\mathcal{P}}_1\in\K^p_n$ is a lateral slice of unit norm, and the 
	lateral slice $\vec{\mathcal{Q}}_1\in\K^\ell_n$ is of unit norm and proportional to 
	$\mathcal{A}\star\vec{\mathcal{P}}_1$. Algorithm \ref{alg:6} describes the tensor Lanczos 
	bidiagonalization algorithm.
	
	\begin{algorithm}[H]
		\caption{Tensor Lanczos bidiagonalization using the t-product.}\label{alg:6}
		\textbf{Input:} $\mathcal{A}\in\K^{\ell\times p}_{n}$, number of steps 
		$m\leq\min\{\ell,p\}$, $\vec{\mathcal{P}}_1\in\K^{p}_n$ with unit norm. \\
		\textbf{Output:} $\mathcal{P}_m=[\vec{\mathcal{P}}_1,\vec{\mathcal{P}}_2,\ldots, 
		\vec{\mathcal{P}}_m]\in\K^{p\times m}_{n}$ and $\mathcal{Q}_m=[\vec{\mathcal{Q}}_1, 
		\vec{\mathcal{Q}}_2,\ldots,\vec{\mathcal{Q}}_m]\in\K^{\ell\times m}_{n}$ with orthonormal 
		lateral slices, $\mathcal{B}_m\in\K^{m\times m}_n$ a bidiagonal tensor, and 
		$\vec{\mathcal{R}}_m\in\K^{\ell}_m$.
		\begin{algorithmic}[1]
			\STATE $\mathcal{P}_1=\left[\vec{\mathcal{P}}_1\right]$.
			\STATE  $\vec{\mathcal{Q}}_1= \mathcal{A}\star \vec{\mathcal{P}}_1$.
			\STATE  $[\vec{\mathcal{Q}}_1, \bm{\alpha}_1]={\tt Normalize}(\vec{\mathcal{Q}}_1)$. 
			\STATE  $\mathcal{Q}_1=\left[\vec{\mathcal{Q}}_1\right]$, $\mathcal{B}_m(1,1,:)=\bm{\alpha}_1$.
			\FOR {$i=1\; \text{to}\; m$}
			\STATE $\vec{\mathcal{R}}_i=\mathcal{A}^H \star \vec{\mathcal{Q}}_i - \bm{\alpha}_i \star \vec{\mathcal{P}}_i$.
			\STATE Reorthogonalization $\vec{\mathcal{R}}_i=\vec{\mathcal{R}}_i- \mathcal{P}_i \star (\mathcal{P}_i^H \star \vec{\mathcal{R}}_i)$.
			\IF {$i<m$}
			\STATE $[\vec{\mathcal{P}}_{i+1}, \bm{\beta}_i]={\tt Normalize}(\vec{\mathcal{R}}_i)$.
			\STATE  $\mathcal{P}_{i+1}=\left[\mathcal{P}_{i}, \vec{\mathcal{P}}_{i+1}\right]$, $\mathcal{B}_m(i,i+1,:)=\bm{\beta}_i$.
			\STATE  $\vec{\mathcal{Q}}_{i+1}=\mathcal{A}\star \vec{\mathcal{P}}_{i+1}-\bm{\beta}_i \star \vec{\mathcal{Q}}_i.$
			\STATE  Reorthogonalization $\vec{\mathcal{Q}}_{i+1}=\vec{\mathcal{Q}}_{i+1}- \mathcal{Q}_{i} \star (\mathcal{Q}_{i}^H \star \vec{\mathcal{Q}}_{i+1})$.
			\STATE  $[\vec{\mathcal{Q}}_{i+1}, \bm{\alpha}_{i+1}]={\tt Normalize}(\vec{\mathcal{Q}}_{i+1})$.
			\STATE $\mathcal{Q}_{i+1}=\left[\mathcal{Q}_{i}, \vec{\mathcal{Q}}_{i+1}\right]$, $\mathcal{B}_{m}(i+1,i+1,:)=\bm{\alpha}_{i+1}$.
			\ENDIF
			\ENDFOR
		\end{algorithmic}
	\end{algorithm}
	
	We remark that Algorithm \ref{alg:6} differs from the tensor bidiagonalization algorithms
	described in \cite{kilmer2013third,RU2022} in that the former produces an upper bidiagonal
	tensor $\mathcal{B}_m$, while the latter determine a lower bidiagonal tensor. The use of 
	an upper bidiagonal tensor in the present paper is inspired by the choices in 
	\cite{baglama2005augmented,golub1965calculating}. Algorithm \ref{alg:6} is said to break
	down when one of the tensor slices $\vec{\mathcal{R}}_i$ or $\vec{\mathcal{Q}}_{i+1}$ vanishes.
	We comment below on this situation, but note that breakdown is exceedingly rare. 
	
	\begin{theo}
		Generically, Algorithm \ref{alg:6} determines the decompositions 
		\begin{eqnarray}
			\mathcal{A}\star\mathcal{P}_m&=&\mathcal{Q}_m\star\mathcal{B}_m, \label{eq 18}\\
			\mathcal{A}^H\star\mathcal{Q}_m&=&\mathcal{P}_m\star\mathcal{B}_m^H+ 
			\vec{\mathcal{R}}_m\star \vec{\mathcal{E}}_m^H, \label{eq 19}
		\end{eqnarray}
		with $\mathcal{P}_m\in\K^{p\times m}_n$, $\mathcal{Q}_m\in\K^{\ell\times m}_n$, where 
		$\mathcal{P}_m^H\star\mathcal{P}_m=\mathcal{I}_m$ and 
		$\mathcal{Q}_m^H\star\mathcal{Q}_m=\mathcal{I}_m$. The tensor 
		$\vec{\mathcal{E}}_m\in\K^m_n$ is the canonical lateral slice whose elements are zero 
		except for the first element of the $m$th tube, which equals $1$, and 
		$\vec{\mathcal{R}}_m\in\K^{p}_n$ is determined by steps $4$ and $5$ of Algorithm 
		\ref{alg:6} such that $\mathcal{P}_m^H\star\vec{\mathcal{R}}_m=0$. The tensor 
		$\mathcal{B}_m\in\K^{m\times m}_n$ is upper bidiagonal, each of whose frontal slices is an
		upper bidiagonal matrix. Thus, 
		\[
		\mathcal{B}_m=\begin{bmatrix}
			\bm{\alpha}_1 & \bm{\beta}_1 & \textbf{0} & \ldots & \textbf{0} \\
			\textbf{0} & \bm{\alpha}_2 & \bm{\beta}_2 & \textbf{0} & \vdots \\
			\vdots & \ddots & \ddots & \ddots & \vdots \\
			\textbf{0} & \ldots & \ldots & \bm{\alpha}_{m-1} & \bm{\beta}_{m-1}\\
			\textbf{0} & \ldots& \ldots & \textbf{0} & \bm{\alpha}_m
		\end{bmatrix},
		\]
		where $\bm{\alpha}_i$ and $\bm{\beta}_i$ are tubes in $\K_n$.
	\end{theo}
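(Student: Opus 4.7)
The plan is to verify \eqref{eq 18} and \eqref{eq 19} by a simultaneous induction on the loop index $i$ in Algorithm~\ref{alg:6}, assembling the columnwise recurrences the algorithm produces into matrix--tensor form. Two preliminary facts are used throughout. First, every call $[\vec{\mathcal{Y}},\bm{a}]={\tt Normalize}(\vec{\mathcal{X}})$ satisfies $\vec{\mathcal{X}}=\vec{\mathcal{Y}}\star\bm{a}$ with $\|\vec{\mathcal{Y}}\|=1$, by construction of Algorithm~\ref{alg:4}. Second, the t-product of a tube and a lateral slice commutes; moreover, because the Fourier-domain entries of $\bm{\alpha}_i$ and $\bm{\beta}_i$ are the nonnegative Frobenius norms produced by {\tt Normalize} (hence real and invariant under complex conjugation), these tubes are self-adjoint: $\bm{\alpha}_i^H=\bm{\alpha}_i$ and $\bm{\beta}_i^H=\bm{\beta}_i$.

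For \eqref{eq 18}, steps~2--4 yield $\mathcal{A}\star\vec{\mathcal{P}}_1=\vec{\mathcal{Q}}_1\star\bm{\alpha}_1$. For $1\le i\le m-1$, combining step~11 with the rescaling in step~13 gives
\[
\mathcal{A}\star\vec{\mathcal{P}}_{i+1}=\vec{\mathcal{Q}}_i\star\bm{\beta}_i+\vec{\mathcal{Q}}_{i+1}\star\bm{\alpha}_{i+1},
\]
once one verifies that the reorthogonalization in step~12 is a no-op in exact arithmetic, i.e.\ $\mathcal{Q}_i^H\star(\mathcal{A}\star\vec{\mathcal{P}}_{i+1}-\bm{\beta}_i\star\vec{\mathcal{Q}}_i)=0$; this reduces, via $\mathcal{Q}_i^H\star\mathcal{A}\star\vec{\mathcal{P}}_{i+1}=(\mathcal{A}^H\star\mathcal{Q}_i)^H\star\vec{\mathcal{P}}_{i+1}$, to the previously established second recurrence and the orthonormality of the columns of $\mathcal{P}_{i+1}$. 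Stacking the $m$ resulting relations as lateral slices yields \eqref{eq 18} with $\mathcal{B}_m$ the upper bidiagonal tensor whose diagonal tubes are $\bm{\alpha}_i$ and whose superdiagonal tubes are $\bm{\beta}_i$.

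For \eqref{eq 19}, step~6 rewrites as $\mathcal{A}^H\star\vec{\mathcal{Q}}_i=\vec{\mathcal{P}}_i\star\bm{\alpha}_i+\vec{\mathcal{R}}_i$, and step~7 is similarly a no-op in exact arithmetic by an analogous argument. For $1\le i\le m-1$, step~9 normalizes $\vec{\mathcal{R}}_i=\vec{\mathcal{P}}_{i+1}\star\bm{\beta}_i$, giving
\[
\mathcal{A}^H\star\vec{\mathcal{Q}}_i=\vec{\mathcal{P}}_i\star\bm{\alpha}_i+\vec{\mathcal{P}}_{i+1}\star\bm{\beta}_i,
\]
whereas for $i=m$ the conditional skips normalization and leaves $\mathcal{A}^H\star\vec{\mathcal{Q}}_m=\vec{\mathcal{P}}_m\star\bm{\alpha}_m+\vec{\mathcal{R}}_m$. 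Assembling these relations columnwise produces \eqref{eq 19}, the residual $\vec{\mathcal{R}}_m\star\vec{\mathcal{E}}_m^H$ contributing only to the $m$th column. The orthonormality relations $\mathcal{P}_m^H\star\mathcal{P}_m=\mathcal{I}_m$ and $\mathcal{Q}_m^H\star\mathcal{Q}_m=\mathcal{I}_m$, as well as $\mathcal{P}_m^H\star\vec{\mathcal{R}}_m=0$, are immediate from the reorthogonalization steps~7 and~12 combined with the unit-norm outputs of {\tt Normalize}.

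The step I expect to require most care is verifying that $\mathcal{B}_m^H$ (rather than some transposed tensor with conjugated or reindexed tubes) is what appears in \eqref{eq 19}; this is the payoff of the self-adjointness of $\bm{\alpha}_i,\bm{\beta}_i$ observed above, since tensor transposition of $\mathcal{B}_m$ then merely transfers tubes from the superdiagonal to the subdiagonal of each frontal slice without altering their values. The nested induction needed to certify that both reorthogonalization steps reduce to no-ops in exact arithmetic is slightly delicate but otherwise routine.
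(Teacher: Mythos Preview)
Your proposal is correct and follows essentially the same approach as the paper: read off the column-wise recurrences from Algorithm~\ref{alg:6}, assemble them into \eqref{eq 18} and \eqref{eq 19}, and establish orthonormality by induction. The paper's own proof is in fact only a sketch---it states that the relations ``follow immediately from the recursion relations'' and that orthonormality ``can be shown by induction,'' then defers to the matrix case in \cite{baglama2005augmented}---so you have supplied considerably more detail than the paper does, including the self-adjointness of the tubes $\bm{\alpha}_i,\bm{\beta}_i$ (a point the paper itself invokes later, in the proof of Theorem~\ref{theo 9}) and the verification that the reorthogonalization steps are redundant in exact arithmetic.

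One small remark on presentation: there is a slight tension in saying both that the reorthogonalization steps are no-ops (which you argue via the inductive orthogonality) and that orthonormality is ``immediate from the reorthogonalization steps.'' Pick one framing: either treat steps~7 and~12 as part of the algorithm, in which case orthonormality of each new slice against the previous ones is enforced by construction and there is nothing to prove, or argue by induction that the raw three-term recurrences already produce orthonormal slices and then observe that steps~7 and~12 subtract zero. Either is fine; mixing the two is a bit circular.
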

	
	\begin{proof}
		The relations \eqref{eq 18} and \eqref{eq 19} follow immediately from the recursion 
		relations of Algorithm \ref{alg:6}. The orthonormality of the lateral slices of 
		$\mathcal{P}_m$ and $\mathcal{Q}_m$ can be shown by induction. The proof is closely
		related to the proof of the existence of the relations \eqref{LBm 1} and \eqref{LBm 2}, 
		and the properties of the matrices involved. The latter relations are used in 
		\cite{baglama2005augmented}.
	\end{proof}
	
	The Lanczos bidiagonalization process may suffer from loss of orthogonality of the lateral 
	slices of the tensors ${\mathcal P}_m$ and ${\mathcal Q}_m$. Therefore, 
	reorthogonalization is carried out in Lines $5$ and $9$ in Algorithm \ref{alg:6}. We 
	remark that reorthogonalization makes the algorithm more costly both in terms of storage 
	and arithmetic floating point operations. The extra cost may be acceptable as long as the 
	number of steps $m$ is fairly small; see \cite{baglama2005augmented,simon2000low} for
	discussions in the matrix case. 
	
	Let $\vec{\mathcal{R}}_m$ be the tensor whose lateral slices are defined in Line 5. Then  
	\begin{equation}\label{new 25}
		[\vec{\mathcal{P}}_{m+1}, \bm{\beta}_m]={\tt Normalize}\left(\vec{\mathcal{R}}_m\right).
	\end{equation}
	In the rare event that some $\bm{\beta}_j$, $1\leq j<m$, vanishes, Algorithm \ref{alg:6} 
	breaks down. Then the singular tubes of $\mathcal{B}_j$ are singular tubes of 
	$\mathcal{A}$, and the left and right lateral tensor singular slices are obtained as 
	described below. When no breakdown takes place, we can express equation \eqref{eq 19} as
	\[
	\mathcal{A}^H\star\mathcal{Q}_m=\mathcal{P}_{m+1}\star \mathcal{B}^H_{m,m+1},
	\]
	where $\mathcal{P}_{m+1}$ is obtained from $\mathcal{P}_m$ by appending the lateral slice 
	$\vec{\mathcal{P}}_{m+1}$, defined in \eqref{new 25}, to get $\mathcal{P}_{m+1}=
	\left[\mathcal{P}_m,\vec{\mathcal{P}}_{m+1}\right]\in\K^{p\times(m+1)}_n$, and 
	$\mathcal{B}_{m,m+1}\in\K^{m\times(m+1)}_{n}$ is obtained by appending the lateral slice  
	$\bm{\beta_m}\star \vec{\mathcal{E}}_m$ to $\mathcal{B}_m$, i.e., 
	$\mathcal{B}_{m,m+1}=\left[\mathcal{B}_m, \bm{\beta}_m\star \vec{\mathcal{E}}_m\right]$.
	
	We turn to the connection between the partial Lanczos tridiagonalization of a third-order 
	tensor and the partial Lanczos tridiagonalization process of the tensor 
	$\mathcal{A}^H\star\mathcal{A}$. This connection will be used later. Multiplying 
	\eqref{eq 18} from the left by $\mathcal{A}^H$, we get
	\begin{eqnarray}\label{new 27}
		\mathcal{A}^H\star\mathcal{A}\star\mathcal{P}_m&=&\mathcal{A}^H\star\mathcal{Q}_m\star
		\mathcal{B}_m\nonumber\\
		&=&\mathcal{P}_m\star\mathcal{B}_m^H\star\mathcal{B}_m+\vec{\mathcal{R}}_m\star 
		\vec{\mathcal{E}}_m^H \star \mathcal{B}_m\nonumber \\
		&=&\mathcal{P}_m\star\mathcal{B}_m^H\star\mathcal{B}_m+\vec{\mathcal{R}}_m\star
		\vec{\mathcal{E}}_m^H \star \bm{\alpha}_m.
	\end{eqnarray}
	Let $\mathcal{T}_m$ be the symmetric tridiagonal tensor defined by
	\[
	\mathcal{T}_m=\mathcal{B}_m^H\star\mathcal{B}_m\in\K^{m\times m}_n.
	\]
	Then \eqref{new 27} is a partial tensor Lanczos bidiagonalization of 
	$\mathcal{A}^H\star\mathcal{A}$ with initial lateral slice 
	$\vec{\mathcal{P}}_1=\mathcal{P}_m\star\vec{\mathcal{E}}_1$. The lateral slices of 
	$\mathcal{P}_m$ form an orthonormal basis for the tensor Krylov subspace 
	\[
	\mathrsfs{K}_m\left(\mathcal{A}^H\star\mathcal{A},\vec{\mathcal{P}}_1\right)=
	{\tt span}\{\vec{\mathcal{P}}_1,\mathcal{A}^H\star\mathcal{A}\star\vec{\mathcal{P}}_1,
	\left(\mathcal{A}^H\star\mathcal{A}\right)^2\star\vec{\mathcal{P}}_1,\ldots,
	\left(\mathcal{A}^H\star\mathcal{A}\right)^{m-1}\star\vec{\mathcal{P}}_1\}.
	\]
	
	Similarly, multiplying \eqref{eq 19} from the left by $\mathcal{A}$, we obtain
	\[
	\mathcal{A}\star\mathcal{A}^H\star\mathcal{Q}_m=\mathcal{Q}_m\star\mathcal{B}_m\star
	\mathcal{B}_m^H+\mathcal{A}\star\vec{\mathcal{R}}_m\star\vec{\mathcal{E}}^H_m.
	\]
	It follows that the lateral slices of $\mathcal{Q}_m$ form an orthonormal basis for the
	Krylov subspace 
	\[
	\mathrsfs{K}_m\left(\mathcal{A}\star\mathcal{A}^H,\vec{\mathcal{Q}}_1\right)=
	{\tt span}\{\vec{\mathcal{Q}}_1,\mathcal{A}\star\mathcal{A}^H\star \vec{\mathcal{Q}}_1,
	\left(\mathcal{A}\star\mathcal{A}^H\right)^2\star\vec{\mathcal{Q}}_1,\ldots,
	\left(\mathcal{A}\star\mathcal{A}^H\right)^{m-1}\star\vec{\mathcal{Q}}_1\}.
	\]

	\subsection{Approximating singular tubes and singular lateral slices}
	We describe an approach to approximate the largest or smallest singular triplets 
	(singular tubes and associated left and right lateral singular slices) of a large tensor 
	$\mathcal{A}\in\K^{\ell\times p}_{n}$ using restarted partial tensor Lanczos 
	bidiagonalization. Since the tensor $\mathcal{A}$ is large, computing its $k$ largest or 
	smallest singular triplets by determining the t-SVD of $\mathcal{A}$ is very expensive. 
	The idea is to approximate the extreme singular triplets of the tensor $\mathcal{A}$ by 
	determining the extreme singular triplets the bidiagonal tensor $\mathcal{B}_m$, where $m$
	is small. Let $\{\bm{s}_i,\vec{\mathcal{U}}_i,\vec{\mathcal{V}}_i\}$, $1\leq i\leq m$, 
	denote the singular triplets of $\mathcal{B}_m$. They satisfy
	\[
	\mathcal{B}_m\star\vec{\mathcal{V}}_i = \bm{s}_i \star \vec{\mathcal{U}}_i \mbox{~~and~~} 
	\mathcal{B}^H_m\star\vec{\mathcal{U}}_i=\bm{s}_i\star\vec{\mathcal{V}}_i.
	\]
	The $k\leq m$ largest singular triplets of $\mathcal{A}$ are approximated by the triplets 
	$\{\bm{s}_{i,m}^\mathcal{A},\vec{\mathcal{U}}_{i,m}^\mathcal{A},
	\vec{\mathcal{V}}_{i,m}^\mathcal{A} \}$ defined by  
	\begin{equation}\label{eq 3.8}
		\bm{s}_{i,m}^\mathcal{A}=\bm{s}_i,\quad\vec{\mathcal{U}}_{i,m}^\mathcal{A}=\mathcal{Q}_m\star
		\vec{\mathcal{U}}_i,\quad\vec{\mathcal{V}}_{i,m}^\mathcal{A}=\mathcal{P}_m\star 
		\vec{\mathcal{V}}_i,\quad i=1,2,\ldots,k.
	\end{equation}
	For $i=1,2,\ldots,k$, we have
	\begin{eqnarray*}
		\mathcal{A}\star \vec{\mathcal{V}}_{i,m}^\mathcal{A}&=& \mathcal{A}\star\mathcal{P}_m\star
		\vec{\mathcal{V}}_i\\
		&=&\mathcal{Q}_m\star\mathcal{B}_m \star \vec{\mathcal{V}}_i\\
		&=&\mathcal{Q}_m\star\bm{s}_i\star\vec{\mathcal{U}}_i\\
		&=&\mathcal{Q}_m\star\vec{\mathcal{U}}_i \star \bm{s}_i\\
		&=&\vec{\mathcal{U}}_{i,m}^\mathcal{A}\star\bm{s}_{i,m}^\mathcal{A}.
	\end{eqnarray*}
	Similarly,
	\begin{eqnarray} 
		\mathcal{A}^H\star\vec{\mathcal{U}}_{i,m}^\mathcal{A}=\mathcal{A}^H\star\mathcal{Q}_m 
		\star\vec{\mathcal{U}}_i&=&\left(\mathcal{P}_m\star\mathcal{B}_m+\vec{\mathcal{R}}_m\star
		\vec{\mathcal{E}}_m^H\right)\star\vec{\mathcal{U}}_i\nonumber\\
		\label{eq 24}
		&=&\vec{\mathcal{V}}_{i,m}^\mathcal{A}\star\bm{s}_{i,m}^\mathcal{A}+\vec{\mathcal{R}}_m 
		\star\vec{\mathcal{E}}_m^H\star\vec{\mathcal{U}}_i.
	\end{eqnarray}
	
	To accept $\{\bm{s}_{i,m}^\mathcal{A},\vec{\mathcal{U}}_{i,m}^\mathcal{A},
	\vec{\mathcal{V}}_{i,m}^\mathcal{A}\}$ as an approximate singular triplet of 
	$\mathcal{A}$, the remainder term 
	$\vec{\mathcal{R}}_m\star\vec{\mathcal{E}}_m^H\star\vec{\mathcal{U}}_i$ should be small 
	enough. We can bound the remainder term according to
	\begin{eqnarray*}
		\left\Vert\vec{\mathcal{R}}_m\star \vec{\mathcal{E}}_m^H\star\vec{\mathcal{U}}_i
		\right\Vert_F
		&=&\dfrac{1}{\sqrt{n}}\left\Vert{\tt bdiag}\left( \widehat{\vec{\mathcal{R}}_m}\right)
		{\tt bdiag}\left(\widehat{\left(\vec{\mathcal{E}}^H_m \right)}\right){\tt bdiag}
		\left(\widehat{\vec{{\mathcal{U}}_i}}\right)\right\Vert_F  \\
		&\leq& 
		\dfrac{1}{\sqrt{n}}\left\Vert{\tt bdiag}\left(\widehat{\vec{\mathcal{R}}_m}\right)
		\right\Vert_F\left\Vert{\tt bdiag}\left(\widehat{\left(\vec{\mathcal{E}}^H_m\right)}
		\right){\tt bdiag}\left(\widehat{\vec{{\mathcal{U}}_i}}\right)\right\Vert_F \\
		&=& 
		\left\Vert{\tt bdiag}\left(\vec{\mathcal{R}}_m\right)\right\Vert_F\left\Vert {\tt bdiag}
		\left(\widehat{\left(\vec{\mathcal{E}}^H_m\right)}\right){\tt bdiag}
		\left(\widehat{\vec{{\mathcal{U}}_i}} \right)  \right\Vert_F \\
		&=& 
		\left\Vert\bm{\beta}_m\right\Vert_F\sum_{s=1}^n\left\vert
		\widehat{\left(\vec{\mathcal{E}}^H_m\right)}^{(s)}\widehat{\vec{{\mathcal{U}}_i}}^{(s)} 
		\right\vert.
	\end{eqnarray*}
	Analogously as in \cite{baglama2005augmented}, we require for $1\leq s\leq n$ that
	\begin{equation*}
		\left\vert\widehat{\left(\vec{\mathcal{E}}^H_m\right)}^{(s)}
		\widehat{\vec{{\mathcal{U}}_i}}^{(s)}\right\vert
		\leq\delta'\left\Vert\widehat{\mathcal{A}}^{(s)}\right\Vert=\delta'
		\left(\bm{s}^{\widehat{\mathcal{A}}^{(s)}}_{1,m}\right)=\delta
		\left(\bm{s}^{\widehat{\mathcal{A}}}_{1,m}\right)^{(s)},
	\end{equation*}
	for a user-chosen parameter $\delta'>0$, where 
	$\left(\bm{s}^{\widehat{\mathcal{A}}}_{j,m}\right)^{(s)}$ denotes the $s$th element of the
	$j$th approximate singular tube of $\widehat{\mathcal{A}}$. We obtain from eq. 
	\eqref{new 2.21} that
	\begin{equation*}
		\left\Vert\vec{\mathcal{R}}_m\star\vec{\mathcal{E}}_m^H\star\vec{\mathcal{U}}_
		i\right\Vert_F\leq\delta'\left\Vert\bm{\beta}_m\right\Vert_F\sum_{s=1}^{n}\left(
		\bm{s}^{\widehat{\mathcal{A}}}_1\right)^{(s)}=n\delta'\left\Vert\bm{\beta}_m\right\Vert_F
		\left(\bm{s}^{\mathcal{A}}_1\right)^{(1)}=n\delta''\left(\bm{s}^{\mathcal{A}}_1
		\right)^{(1)},
	\end{equation*}
	where $\delta''=\delta'\left\Vert\bm{\beta}_m\right\Vert_F$. The computed approximate 
	singular triplets $\{\bm{s}_{i,m}^\mathcal{A},\vec{\mathcal{U}}_{i,m}^\mathcal{A},
	\vec{\mathcal{V}}_{i,m}^\mathcal{A}\}$, $i=1,2,\ldots,k$, of $\mathcal{A}$ are accepted as
	singular triplets of $\mathcal{A}$ if
	\begin{equation}\label{new 38}
		\left\Vert\vec{\mathcal{R}}_m\star\vec{\mathcal{E}}_m^H\star\vec{\mathcal{U}}_i\right
		\Vert_F\leq\delta\left(\bm{s}^{\mathcal{A}}_{1,m}\right)^{(1)},\quad i=1,2,\ldots k,
	\end{equation}
	for some user-specified parameter $\delta>0$.
	
	To keep the storage requirement fairly small for large-scale problems, we would like the 
	number of steps $m$ of the tensor Lanczos bidiagonalization process to be small. However, 
	when $m$ is small, it may not be possible to approximate the desired singular triplets 
	sufficiently accurately using the available Krylov subspaces 
	$\mathrsfs{K}_m\left(\mathcal{A}^H\star\mathcal{A},\vec{\mathcal{Q}}_1\right)$ and 
	$\mathrsfs{K}_m\left(\mathcal{A}\star\mathcal{A}^H,\vec{\mathcal{P}}_1\right)$. A remedy
	for this situation is to restart the tensor Lanczos bidiagonalization process. The idea 
	is to repeatedly update the initial lateral slices used for the tensor Lanczos 
	bidiagonalization process, and in this way determine a sequence of increasingly more 
	appropriate Krylov subspaces, until the $k$ desired singular triplets have been found 
	with required accuracy. We remark that restarting techniques have been used for computing a few desired
	singular triplets or eigenvalue-eigenvector pairs of a large matrix, where properties of 
	Ritz vectors, harmonic Ritz vectors, and refined Ritz vectors have been exploited; see, 
	e.g., \cite{baglama2005augmented,hochstenbach2001jacobi,jia2010refined,
		Sorensen1992implicit,stathopoulos1998restarting} for details.

	\subsection{Augmentation by Ritz lateral slices}
	Assume that we would like to approximate the $k$ largest singular triplets of 
	$\mathcal{A}\in\R^{\ell\times p\times n}$. To this end, we carry out $m>k$ steps of 
	tensor Lanczos bidiagonalization as described in the previous subsection. The approximate 
	right singular lateral slice $\vec{\mathcal{V}}_{i,m}^\mathcal{A}$ is a Ritz lateral  slice of $\mathcal{A}^H \star \mathcal{A}$ associated with the Ritz tube 
	$\left(\bm{s}_{i,m}^\mathcal{A}\right)^2=\bm{s}_{i,m}^\mathcal{A}\star
	\bm{s}_{i,m}^\mathcal{A}$ for $i\in \{1,2,\ldots,m\}$, and we have
	\begin{eqnarray*}
		\mathcal{A}^H \star\mathcal{A}\star\vec{\mathcal{V}}_{i,m}^{\mathcal{A}}=
		\mathcal{A}^H\star\vec{\mathcal{U}}_{i,m}^\mathcal{A}\star\bm{s}_{i,m}^\mathcal{A} 
		&=&\left(\vec{\mathcal{V}}_{i,m}^\mathcal{A}\star\bm{s}_{i,m}^\mathcal{A}+ 
		\vec{\mathcal{R}}_m\star\vec{\mathcal{E}}_m^H\star\vec{\mathcal{U}}_i\right)\star 
		\bm{s}_{i,m}^\mathcal{A} \\
		&=&\vec{\mathcal{V}}_{i,m}^\mathcal{A}\star\left(\bm{s}_{i,m}^\mathcal{A}\right)^2+
		\vec{\mathcal{R}}_m\star\vec{\mathcal{E}}_m^H\star\vec{\mathcal{U}}_i\star
		\bm{s}_{i,m}^\mathcal{A}.
	\end{eqnarray*}
	%Note that since the tensors $\mathcal{V}_m=\left[\vec{\mathcal{V}}_1,\vec{\mathcal{V}}_2,
	%\ldots,\vec{\mathcal{V}}_m\right]$ and $\mathcal{P}_m$ are orthonormal, the Ritz lateral 
	%slices $\vec{\mathcal{V}}_{i,m}^\mathcal{A}$, $i=1,\ldots,m$, also are orthonormal.  
	
	In what follows we will show some results that will help us to approximate the largest
	or smallest singular triplets of a third-order tensor. The idea behind these results 
	is to find equations that are analogous to \eqref{eq 18} and \eqref{eq 19}, and such that 
	the reduced tensor will contain the $k$ approximate singular tubes among its first $k$ 
	elements on the diagonal, and the right projection tensor will contain the $k$ right Ritz 
	lateral slices among its first $k$ lateral slices, and the left projection tensor will 
	contain the $k$ left Ritz lateral slices among its first $k$ lateral slices. The following
	theorem will be helpful.
	
	\begin{theo}\label{theo 9} 
		Assume that $m$ steps of Algorithm \ref{alg:6} have been applied to the third-order tensor
		$\mathcal{A}\in\K^{\ell\times p}_n$, and suppose that $\bm{\beta}_m$ in 
		\eqref{eq 19} is nonvanishing. Then for $k<m$, we have 
		\begin{eqnarray}
			\mathcal{A}\star\widetilde{\mathcal{P}}_{k+1}&=&\widetilde{\mathcal{Q}}_{k+1}\star
			\widetilde{\mathcal{B}}_{k+1}, \label{theo eq 1}\\
			\mathcal{A}^H\star\widetilde{\mathcal{Q}}_{k+1}&=&\widetilde{\mathcal{P}}_{k+1}\star
			\widetilde{\mathcal{B}}_{k+1}^H+\widetilde{\bm{\beta}}_{k+1}\star 
			\vec{\widetilde{\mathcal{P}}}_{k+2}\star\vec{\mathcal{E}}_{k+1}^H, \label{theo eq 2}
		\end{eqnarray}
		where $\widetilde{\mathcal{P}}_{k+1}\in\K^{p\times(k+1)}_n$ and 
		$\widetilde{\mathcal{Q}}_{k+1}\in\K^{\ell\times(k+1)}_n$ have orthonormal lateral slices,
		and the first $k$ lateral slices of $\widetilde{\mathcal{P}}_m$ are the first $k$ Ritz 
		lateral slices of $\mathcal{A}$, $\widetilde{\mathcal{B}}_{k+1}\in\K^{(k+1)\times(k+1)}_n$
		is an upper triangular tensor, $\vec{\widetilde{\mathcal{P}}}_{k+2}\in\K^p_n$ is a lateral
		slice that is orthogonal to $\widetilde{\mathcal{P}}_{k+1}$, 
		$\widetilde{\bm{\beta}}_{k+1}\in\K_n$, and $\vec{\mathcal{E}}_{k+1}\in\K^{k+1}_n$ is the 
		canonical element under the t-product.
	\end{theo}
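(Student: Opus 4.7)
The plan is to build $\widetilde{\mathcal{P}}_{k+1}$ and $\widetilde{\mathcal{Q}}_{k+1}$ explicitly from the t-SVD of $\mathcal{B}_m$ together with the already-computed residual lateral slice $\vec{\mathcal{P}}_{m+1}$, and then read off $\widetilde{\mathcal{B}}_{k+1}$ from the resulting identities. Concretely, first I would compute the t-SVD $\mathcal{B}_m = \mathcal{U}_{B_m}\star \mathcal{S}_{B_m}\star \mathcal{V}_{B_m}^H$ via Algorithm \ref{alg:2}, and retain only the first $k$ lateral slices of $\mathcal{U}_{B_m}$ and $\mathcal{V}_{B_m}$ and the leading $k\times k$ f-diagonal block of $\mathcal{S}_{B_m}$; call these $\mathcal{U}_{B_m}^{(k)}$, $\mathcal{V}_{B_m}^{(k)}$, and $\mathcal{S}_k$. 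Then I define the Ritz lateral slices
\[
\widetilde{\mathcal{P}}_k := \mathcal{P}_m \star \mathcal{V}_{B_m}^{(k)}, \qquad
\widetilde{\mathcal{Q}}_k := \mathcal{Q}_m \star \mathcal{U}_{B_m}^{(k)},
\]
append $\vec{\widetilde{\mathcal{P}}}_{k+1} := \vec{\mathcal{P}}_{m+1}$ (the residual slice from \eqref{new 25}), and form $\widetilde{\mathcal{P}}_{k+1} := [\widetilde{\mathcal{P}}_k,\vec{\widetilde{\mathcal{P}}}_{k+1}]$. Orthonormality of the lateral slices of $\widetilde{\mathcal{P}}_{k+1}$ follows from $\mathcal{P}_m^H\star\mathcal{P}_m=\mathcal{I}_m$, the orthogonality of $\mathcal{V}_{B_m}^{(k)}$, and $\mathcal{P}_m^H\star\vec{\mathcal{P}}_{m+1}=0$ provided by Algorithm \ref{alg:6}; analogously for $\widetilde{\mathcal{Q}}_k$.

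Next, I would verify \eqref{theo eq 1}. Using \eqref{eq 18} and the t-SVD of $\mathcal{B}_m$,
\[
\mathcal{A}\star\widetilde{\mathcal{P}}_k = \mathcal{Q}_m\star\mathcal{B}_m\star\mathcal{V}_{B_m}^{(k)} = \mathcal{Q}_m\star\mathcal{U}_{B_m}^{(k)}\star\mathcal{S}_k = \widetilde{\mathcal{Q}}_k\star\mathcal{S}_k,
\]
which already has the desired upper-triangular (in fact f-diagonal) structure on the first $k$ columns. For the last column, I would compute $\mathcal{A}\star\vec{\widetilde{\mathcal{P}}}_{k+1}$, orthogonalize the result against $\widetilde{\mathcal{Q}}_k$, and normalize: this yields a tube $\widetilde{\bm{\rho}}_k\in\K_n^k$ and a unit-norm lateral slice $\vec{\widetilde{\mathcal{Q}}}_{k+1}\in\K_n^\ell$ with associated tube $\widetilde{\bm{\alpha}}_{k+1}$ such that
\[
\mathcal{A}\star\vec{\widetilde{\mathcal{P}}}_{k+1} = \widetilde{\mathcal{Q}}_k\star\widetilde{\bm{\rho}}_k + \vec{\widetilde{\mathcal{Q}}}_{k+1}\star\widetilde{\bm{\alpha}}_{k+1}.
\]
Setting $\widetilde{\mathcal{Q}}_{k+1}:=[\widetilde{\mathcal{Q}}_k,\vec{\widetilde{\mathcal{Q}}}_{k+1}]$ and assembling
\[
\widetilde{\mathcal{B}}_{k+1} := \begin{bmatrix}\mathcal{S}_k & \widetilde{\bm{\rho}}_k \\ \mathbf{0} & \widetilde{\bm{\alpha}}_{k+1}\end{bmatrix}
\]
then produces \eqref{theo eq 1} with an upper-triangular reduced tensor. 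Here I would also identify $\widetilde{\bm{\rho}}_k$ explicitly using \eqref{eq 19}: since $\widetilde{\mathcal{Q}}_k^H\star\mathcal{A}\star\vec{\mathcal{P}}_{m+1} = \mathcal{U}_{B_m}^{(k)H}\star\mathcal{Q}_m^H\star\mathcal{A}\star\vec{\mathcal{P}}_{m+1}$, the recursion of Algorithm \ref{alg:6} gives $\widetilde{\bm{\rho}}_k = \bm{\beta}_m\star(\mathcal{U}_{B_m}^{(k)}(m,:,:))^H$, i.e. a tube-scaled copy of the last tube-row of $\mathcal{U}_{B_m}^{(k)}$.

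For \eqref{theo eq 2}, I would project $\mathcal{A}^H\star\widetilde{\mathcal{Q}}_{k+1}$ onto $\widetilde{\mathcal{P}}_{k+1}$ column by column. Multiplying \eqref{eq 19} on the right by $\mathcal{U}_{B_m}^{(k)}$ yields
\[
\mathcal{A}^H\star\widetilde{\mathcal{Q}}_k = \widetilde{\mathcal{P}}_k\star\mathcal{S}_k^H + \vec{\widetilde{\mathcal{P}}}_{k+1}\star\widetilde{\bm{\rho}}_k^H,
\]
which matches the first $k$ columns of $\widetilde{\mathcal{P}}_{k+1}\star\widetilde{\mathcal{B}}_{k+1}^H$. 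For the $(k+1)$th column, using $\mathcal{A}\star\widetilde{\mathcal{P}}_k=\widetilde{\mathcal{Q}}_k\star\mathcal{S}_k$ and $\widetilde{\mathcal{Q}}_k^H\star\vec{\widetilde{\mathcal{Q}}}_{k+1}=\mathbf{0}$, one obtains $\widetilde{\mathcal{P}}_k^H\star\mathcal{A}^H\star\vec{\widetilde{\mathcal{Q}}}_{k+1}=\mathbf{0}$ and $\vec{\widetilde{\mathcal{P}}}_{k+1}^H\star\mathcal{A}^H\star\vec{\widetilde{\mathcal{Q}}}_{k+1}=\widetilde{\bm{\alpha}}_{k+1}^H$. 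Applying Algorithm \ref{alg:4} to normalize the residual $\mathcal{A}^H\star\vec{\widetilde{\mathcal{Q}}}_{k+1} - \vec{\widetilde{\mathcal{P}}}_{k+1}\star\widetilde{\bm{\alpha}}_{k+1}^H$ defines $\widetilde{\bm{\beta}}_{k+1}$ and $\vec{\widetilde{\mathcal{P}}}_{k+2}$, yielding \eqref{theo eq 2} with $\vec{\widetilde{\mathcal{P}}}_{k+2}$ orthogonal to $\widetilde{\mathcal{P}}_{k+1}$ by construction.

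The main obstacle is bookkeeping for the tube-scalars and the non-commutativity of the t-product (aside from tube-tube multiplication, which is commutative via pointwise multiplication in the Fourier domain). In particular, care is needed when justifying the identity $\widetilde{\bm{\rho}}_k = \bm{\beta}_m\star(\mathcal{U}_{B_m}^{(k)}(m,:,:))^H$ and when checking that the projection of $\mathcal{A}^H\star\vec{\widetilde{\mathcal{Q}}}_{k+1}$ onto $\widetilde{\mathcal{P}}_k$ vanishes, since these hinge on moving tube factors through the t-product and using the conjugate-symmetry structure exposed by the DFT; the rest of the argument is direct substitution using \eqref{eq 18}--\eqref{eq 19}.
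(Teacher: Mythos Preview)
Your proposal is correct and follows essentially the same route as the paper's own proof: the paper also sets $\widetilde{\mathcal{P}}_{k+1}=[\mathcal{P}_m\star\vec{\mathcal{V}}_1,\ldots,\mathcal{P}_m\star\vec{\mathcal{V}}_k,\vec{\mathcal{P}}_{m+1}]$ and $\widetilde{\mathcal{Q}}_{k+1}=[\mathcal{Q}_m\star\vec{\mathcal{U}}_1,\ldots,\mathcal{Q}_m\star\vec{\mathcal{U}}_k,\vec{\widetilde{\mathcal{R}}'}_k]$, derives the coupling tubes via $\bm{\rho}_i=\bm{\beta}_m\star\vec{\mathcal{U}}_i^H\star\vec{\mathcal{E}}_m$ (your $\widetilde{\bm{\rho}}_k=\bm{\beta}_m\star(\mathcal{U}_{B_m}^{(k)}(m,:,:))^H$ in block form), and obtains \eqref{theo eq 2} by the same orthogonality arguments you outline. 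The only cosmetic difference is that the paper works slice-by-slice whereas you package things in block notation; your worry about non-commutativity is harmless here since every factor that must be moved is a tube, and tubes commute with all t-products.
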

	
	\begin{proof}
		Let the Ritz lateral slices $\vec{\mathcal{V}}_{i,m}^\mathcal{A}$ for $1\leq i\leq k$ 
		be associated with the $k$ Ritz tubes of $\mathcal{A}$. Introduce the tensor
		\begin{equation} \label{new 40}
			\widetilde{\mathcal{P}}_{k+1}=\left[\vec{\mathcal{V}}_{1,m}^\mathcal{A}, 
			\vec{\mathcal{V}}_{2,m}^\mathcal{A},\ldots,\vec{\mathcal{V}}_{k,m}^\mathcal{A}, 
			\vec{\mathcal{P}}_{m+1}\right]\in\K^{p\times (k+1)}_n,
		\end{equation}
		where $\vec{\mathcal{P}}_{m+1}$ is given by \eqref{new 25}. Then, using the fact that 
		$\mathcal{A}\star\vec{\mathcal{V}}_{i,m}^\mathcal{A}=\vec{\mathcal{U}}_{i,m}^\mathcal{A}
		\star\bm{s}_{i,m}^\mathcal{A}$ for $i=1,2,\ldots,k$, we obtain 
		\begin{eqnarray}\label{eq 3.25}
			\mathcal{A}\star\widetilde{\mathcal{P}}_{k+1}
			&=&\left[\mathcal{A}\star\vec{\mathcal{V}}_{1,m}^\mathcal{A},\mathcal{A}\star
			\vec{\mathcal{V}}_{2,m}^\mathcal{A},\ldots,\mathcal{A}\star
			\vec{\mathcal{V}}_{k,m}^\mathcal{A},\mathcal{A}\star\vec{\mathcal{P}}_{m+1}\right]
			\nonumber\\
			&=&\left[\vec{\mathcal{U}}_{1,m}^\mathcal{A}\star\bm{s}_{1,m}^\mathcal{A}, 
			\vec{\mathcal{U}}_{2,m}^\mathcal{A}\star\bm{s}_{2,m}^\mathcal{A},\ldots,
			\vec{\mathcal{U}}_{k,m}^\mathcal{A}\star\bm{s}_{k,m}^\mathcal{A},\mathcal{A}\star
			\vec{\mathcal{P}}_{m+1}\right].
		\end{eqnarray}
		Orthogonalizing the term $\mathcal{A}\star\vec{\mathcal{P}}_{m+1}$ against 
		$\{\vec{\mathcal{U}}_{i,m}^\mathcal{A}\}_{i=1:k}$ gives 
		\begin{equation}\label{eq 3.26}
			\mathcal{A}\star\vec{\mathcal{P}}_{m+1}=\sum_{i=1}^{k}\bm{\rho}_i\star 
			\vec{\mathcal{U}}_{i,m}^\mathcal{A}+\vec{\widetilde{\mathcal{R}}}_k,
		\end{equation} 
		where $\vec{\widetilde{\mathcal{R}}}_k$ is orthogonal to 
		$\{\vec{\mathcal{U}}_{i,m}^\mathcal{A}\}_{i=1:k}$, and the $\bm{\rho}_i$ for 
		$i\in \{1,2,\ldots,k\}$ are given by
		\begin{eqnarray*}
			\bm{\rho}_i=\left(\vec{\mathcal{U}}_{i,m}^\mathcal{A}\right)^H\star 
			\left(\mathcal{A}\star\vec{\mathcal{P}}_{m+1}\right)
			&=&\left(\mathcal{A}^H\star
			\vec{\mathcal{U}}_{i,m}^\mathcal{A}\right)^H \star  \vec{\mathcal{P}}_{m+1}\\
			&=&\left(\vec{\mathcal{V}}_{i,m}^\mathcal{A}\star 
			\bm{s}_{i,m}^\mathcal{A}+\vec{\mathcal{R}}_m\star\vec{\mathcal{E}}_m^H\star 
			\vec{\mathcal{U}}_i\right)^H \star  \vec{\mathcal{P}}_{m+1} \\
			&=&\bm{\beta}_m^H\star \left( \vec{\mathcal{U}}_i^H\star \vec{\mathcal{E}}_m\star 
			\vec{\mathcal{P}}_{m+1}^H\right)\star\vec{\mathcal{P}}_{m+1}\\
			&=&\bm{\beta}_m\star\vec{\mathcal{U}}_i^H\star\vec{\mathcal{E}}_m\\
			&=&\bm{\beta}_m\star\left<\vec{\mathcal{U}}_i,\vec{\mathcal{E}}_m\right>,
		\end{eqnarray*}
		because $\bm{\beta}_m=\bm{\beta}_m^H$.
		
		Let $\vec{\widetilde{\mathcal{R}}}_k=\vec{\widetilde{\mathcal{R}'}}_k\star 
		\widetilde{\bm{\alpha}}_{k+1}$ be a normalization of $\vec{\widetilde{\mathcal{R}}}_k$,
		and introduce the tensors
		\begin{equation}\label{new 44}
			\widetilde{\mathcal{Q}}_{k+1}=\left[\vec{\mathcal{U}}_{1,m}^\mathcal{A}, 
			\vec{\mathcal{U}}_{2,m}^\mathcal{A},\ldots,\vec{\mathcal{U}}_{k,m}^\mathcal{A}, 
			\vec{\widetilde{\mathcal{R}'}}_k\right]\in\K^{\ell\times(k+1)}_n
		\end{equation}
		and 
		\begin{equation}\label{new 45}
			\widetilde{\mathcal{B}}_{k+1}=\begin{bmatrix}
				\bm{s}_{1,m}^\mathcal{A} & \textbf{0} & \ldots & \textbf{0} & \bm{\rho}_1 \\
				\textbf{0}&\bm{s}_{2,m}^\mathcal{A}  & \ldots & \textbf{0} & \bm{\rho}_2 \\
				\vdots & \ddots & \ddots & \ddots & \vdots \\
				\textbf{0} & \ldots & \textbf{0} & \bm{s}_{k,m}^\mathcal{A} & \bm{\rho}_k \\
				\textbf{0} & \ldots & \ldots & \textbf{0} & \widetilde{\bm{\alpha}}_{k+1}
			\end{bmatrix}\in\K^{(k+1)\times(k+1)}_n.
		\end{equation}
		Then, from \eqref{eq 3.25} and \eqref{eq 3.26}, we obtain 
		\begin{eqnarray}\label{new 47}
			\mathcal{A}\star\widetilde{\mathcal{P}}_{k+1}
			&=&\left[\vec{\mathcal{U}}_{1,m}^{\mathcal{A}}\star\bm{s}_{1,m}^\mathcal{A}, 
			\vec{\mathcal{U}}_{2,m}^{\mathcal{A}}\star\bm{s}_{2,m}^\mathcal{A},\ldots, 
			\vec{\mathcal{U}}_{k,m}^{\mathcal{A}}\star\bm{s}_{k,m}^\mathcal{A}, 
			\sum_{i=1}^{k}\bm{\rho}_i\star\vec{\mathcal{U}}_{i,m}^\mathcal{A}+
			\vec{\widetilde{\mathcal{R}}}_k  \right]\nonumber \\
			&=& \widetilde{\mathcal{Q}}_{k+1}\star\widetilde{\mathcal{B}}_{k+1}.
		\end{eqnarray}
		On the other hand, as 
		\[
		\mathcal{A}^H\star\widetilde{\mathcal{Q}}_{k+1}=\left[\mathcal{A}^H\star
		\vec{\mathcal{U}}_{1,m}^\mathcal{A},\mathcal{A}^H\star\vec{\mathcal{U}}_{2,m}^\mathcal{A},
		\ldots,\mathcal{A}^H\star\vec{\mathcal{U}}_{k,m}^\mathcal{A},\mathcal{A}^H\star 
		\overrightarrow{\widetilde{\mathcal{R}'}}_k\right],
		\]
		using \eqref{eq 24}, we get 
		\begin{eqnarray*}
			\mathcal{A}^H\star\vec{\mathcal{U}}_{i,m}^\mathcal{A}
			&=&\vec{\mathcal{V}}_{i,m}^\mathcal{A}\star\bm{s}_{i,m}^\mathcal{A}+\vec{\mathcal{R}}_m 
			\star\vec{\mathcal{E}}_m^H\star\vec{\mathcal{U}}_i \\
			&=&\vec{\mathcal{V}}_{i,m}^\mathcal{A}\star\bm{s}_{i,m}^\mathcal{A}+
			\vec{\mathcal{P}}_{m+1}\star\bm{\beta}_m\star\vec{\mathcal{E}}_m^H\star
			\vec{\mathcal{U}}_i \\
			&=&\vec{\mathcal{V}}_{i,m}^\mathcal{A}\star\bm{s}_{i,m}^\mathcal{A}+
			\vec{\mathcal{P}}_{m+1}\star\bm{\rho}_i^H.
		\end{eqnarray*}
		Since
		\[
		\left<\mathcal{A}^H\star \vec{\widetilde{\mathcal{R}}}'_k ,\vec{\mathcal{V}}_{i,m}^\mathcal{A}\right>=\left(\vec{\widetilde{\mathcal{R}}}'_k\right)^H\star\mathcal{A}\star 
		\vec{\mathcal{V}}_{i,m}^\mathcal{A}=\bm{s}_{i,m}^\mathcal{A}\star
		\left(\vec{\widetilde{\mathcal{R}}}'_k\right)^H\star\vec{\mathcal{U}}_{i,m}^\mathcal{A}=\textbf{0},
		\]
		the tensor $\mathcal{A}^H\star\vec{\widetilde{\mathcal{R}'}}_k$ is orthogonal to 
		$\vec{\mathcal{V}}_{i,m}^\mathcal{A}$. Moreover, in view of that 
		$\vec{\mathcal{V}}_{i,m}^\mathcal{A}$ is orthogonal to $\vec{\mathcal{P}}_{m+1}$, we 
		obtain 
		\begin{equation}\label{new 50}
			\mathcal{A}^H\star\overrightarrow{\widetilde{\mathcal{R}'}}_k=
			\bm{\gamma}\star\vec{\mathcal{P}}_{m+1}+\vec{\mathcal{F}}_{k+1},
		\end{equation}
		where $\vec{\mathcal{F}}_{k+1}$ is orthogonal to $\vec{\mathcal{P}}_{m+1}$ as well as to
		$\vec{\mathcal{V}}_{i,m}^\mathcal{A}$. Due to the orthogonality of 
		$\vec{\widetilde{\mathcal{R}}}_{k}$ (or $\overrightarrow{\widetilde{\mathcal{R}'}}_{k}$) 
		to $\left\{\vec{\mathcal{U}}_{i,m}^\mathcal{A}\right\}_{i=1:k}$, the parameter 
		$\bm{\gamma}$ in \eqref{new 50} is given by
		\begin{eqnarray*}
			\bm{\gamma}=\left<\vec{\mathcal{P}}_{m+1},\mathcal{A}^H\star
			\overrightarrow{\widetilde{\mathcal{R}'}}_k\right>&=&\left<\mathcal{A}\star
			\vec{\mathcal{P}}_{m+1},\overrightarrow{\widetilde{\mathcal{R}'}}_k\right> \\
			&=&\left<\sum_{i=1}^{k}\bm{\rho}_i\star\vec{\mathcal{U}}_{i,m}^\mathcal{A}+ 
			\vec{\widetilde{\mathcal{R}}}_{k},
			\overrightarrow{\widetilde{\mathcal{R}'}}_{k}\right> \\
			&=&\left<\vec{\widetilde{\mathcal{R}}}_k,
			\overrightarrow{\widetilde{\mathcal{R}'}}_k\right>=
			\widetilde{\bm{\alpha}}_{k+1}.
		\end{eqnarray*}
		Consequently,
		\begin{eqnarray}\label{eq 41}
			\mathcal{A}^H\star\widetilde{\mathcal{Q}}_{k+1}
			&=&\left[\vec{\mathcal{V}}_{1,m}^\mathcal{A}\star\bm{s}_{1,m}^\mathcal{A}+ 
			\vec{\mathcal{P}}_{m+1}\star\bm{\rho}_1^H,\ldots,\vec{\mathcal{V}}_{k,m}^\mathcal{A}\star 
			\bm{s}_{k,m}^\mathcal{A}+\vec{\mathcal{P}}_{m+1}\star\bm{\rho}_k^H, 
			\widetilde{\bm{\alpha}}_{k+1}\star\vec{\mathcal{P}}_{m+1}+\vec{\mathcal{F}}_{k+1}\right]
			\nonumber \\
			&=&\widetilde{\mathcal{P}}_{k+1}\star\widetilde{\mathcal{B}}_{k+1}^H+
			\vec{\mathcal{F}}_{k+1}\star\vec{\mathcal{E}}_{k+1}^H\nonumber \\
			&=&\widetilde{\mathcal{P}}_{k+1}\star\widetilde{\mathcal{B}}_{k+1}^H+ 
			\widetilde{\bm{\beta}}_{k+1}\star\vec{\widetilde{\mathcal{P}}}_{k+2}\star
			\vec{\mathcal{E}}_{k+1}^H,
		\end{eqnarray}
		where $\widetilde{\bm{\beta}}_{k+1}$ and $\vec{\widetilde{\mathcal{P}}}_{k+2}$ are 
		determined by the normalization of $\vec{\mathcal{F}}_{k+1}$, i.e., 
		$\vec{\mathcal{F}}_{k+1}=\widetilde{\bm{\beta}}_{k+1}\star 
		\vec{\widetilde{\mathcal{P}}}_{k+2}$, because 
		\[
		\widetilde{\mathcal{B}}_{k+1}^H=\begin{bmatrix}
			\bm{s}_{1,m}^\mathcal{A} & \bm{0} & \ldots & \bm{0} & \bm{0}\\
			\bm{0} & \bm{s}_{2,m}^\mathcal{A} & \bm{0} & \ldots  & \bm{0}\\
			\vdots & \ddots& \ddots & \ddots & \vdots \\
			\bm{0}& \ldots & \bm{0} & \bm{s}_{k,m}^\mathcal{A}& \bm{0}\\
			\bm{\rho}_1^H & \bm{\rho}_2^H & \ldots & \bm{\rho}_k^H & \widetilde{\bm{\alpha}}_{k+1}
		\end{bmatrix}\in \K^{(k+1)\times (k+1)}_n.
		\]
		The orthogonality of $\widetilde{\mathcal{P}}_{k+1}$ and $\widetilde{\mathcal{Q}}_{k+1}$
		now follows from the orthogonality of the sequences 
		$\left\{\vec{\mathcal{V}}_{i,m}^\mathcal{A}\right\}_{i=1:k}$ and 
		$\left\{\vec{\mathcal{U}}_{i,m}^\mathcal{A} \right\}_{i=1:k}$, respectively, given by \eqref{eq 3.8}.
	\end{proof}
	
	In the preceding theorem we assumed $\bm{\beta}_m$ to be nonvanishing. If, instead, 
	$\bm{\beta}_m$ vanishes, then the singular tubes of $\mathcal{B}_m$ are singular tubes of
	$\mathcal{A}$, and the left and right singular lateral slices of $\mathcal{A}$ can be
	determined from those of $\mathcal{B}_m$. Similarly, if $\widetilde{\bm{\beta}}_{k+1}$ in
	\eqref{new 50} vanishes, then the singular tubes of $\widetilde{\mathcal{B}}_{k+1}$ are
	singular tubes of $\mathcal{A}$, and the singular lateral slices of $\mathcal{A}$ can be
	determined from $\widetilde{\mathcal{P}}_{k+1}$ and $\widetilde{\mathcal{Q}}_{k+1}$. 
	
	If the $\widetilde{\bm{\beta}}_{k+1}$ is nonvanishing, then we append new lateral slices to 
	$\widetilde{\mathcal{P}}_{k+1}$ and $\widetilde{\mathcal{Q}}_{k+1}$ repeatedly until 
	iteration $m-k$. This is the subject of the following theorem.
	%LR: Anas, please check the above.
	
	\begin{theo}\label{prop 1}
		Assume that $m$ steps of Algorithm \ref{alg:6} have been applied to $\mathcal{A}$ and that 
		eqs. \eqref{new 47} and \eqref{eq 41} hold. If the $\widetilde{\bm{\beta}}_{k+1}$ are
		nonvanishing for $1\leq k<m$, then we have the following relations 
		\begin{eqnarray*}
			\mathcal{A}\star\widetilde{\mathcal{P}}_m&=&\widetilde{\mathcal{Q}}_m\star
			\widetilde{\mathcal{B}}_m,  \\
			\mathcal{A}^H\star\widetilde{\mathcal{Q}}_m&=&\widetilde{\mathcal{P}}_m\star
			\widetilde{\mathcal{B}}_m^H+\widetilde{\bm{\beta}}_m\star
			\vec{\widetilde{\mathcal{P}}}_{m+1}\star \vec{\mathcal{E}}_m^H,
		\end{eqnarray*} 
		where $\widetilde{\mathcal{P}}_m\in\K^{p\times m}_n$ and 
		$\widetilde{\mathcal{Q}}_m\in\K^{\ell\times m}_n$ have orthonormal lateral slices, 
		$\widetilde{\mathcal{B}}_m\in\K^{m\times m}_n$ is an upper triangular, 
		$\widetilde{\bm{\beta}}_m\in\K_n$, $\vec{\widetilde{\mathcal{P}}}_{m+1}\in\K^p_n$ is
		orthogonal to $\widetilde{\mathcal{P}}_m$, and $\vec{\mathcal{E}}_m\in\K^m_n$ is the
		canonical element under the t-product. The first $k$ lateral slices of 
		$\widetilde{\mathcal{P}}_m$ and $\widetilde{\mathcal{Q}}_{m}$ are the same as those of the
		tensors $\widetilde{\mathcal{P}}_{k+1}$ and $\widetilde{\mathcal{Q}}_{k+1}$, respectively, 
		given in Theorem \ref{theo 9}.
	\end{theo}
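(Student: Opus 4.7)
The plan is to prove Theorem \ref{prop 1} by induction on $j$, starting from the base case $j=k+1$ furnished by Theorem \ref{theo 9} (eqs. \eqref{new 47} and \eqref{eq 41}) and advancing one column at a time until $j=m$. Each inductive step reproduces one sweep of Algorithm \ref{alg:6} initiated from the augmented slice $\vec{\widetilde{\mathcal{P}}}_{k+2}$ rather than from the original seed, with explicit reorthogonalization of each newly produced lateral slice against all previously computed slices.

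Suppose that for some $k+1\le j<m$ we have orthonormal tensors $\widetilde{\mathcal{P}}_j\in\K^{p\times j}_n$ and $\widetilde{\mathcal{Q}}_j\in\K^{\ell\times j}_n$, an upper triangular $\widetilde{\mathcal{B}}_j\in\K^{j\times j}_n$, a nonvanishing tube $\widetilde{\bm{\beta}}_j$, and a unit-norm slice $\vec{\widetilde{\mathcal{P}}}_{j+1}\in\K^{p}_n$ orthogonal to $\widetilde{\mathcal{P}}_j$, satisfying the two displayed relations of the theorem with $m$ replaced by $j$. I would then perform one bidiagonalization step: set $\vec{\mathcal{Y}}:=\mathcal{A}\star\vec{\widetilde{\mathcal{P}}}_{j+1}-\vec{\widetilde{\mathcal{Q}}}_j\star\widetilde{\bm{\beta}}_j$ and $[\vec{\widetilde{\mathcal{Q}}}_{j+1},\widetilde{\bm{\alpha}}_{j+1}]:={\tt Normalize}(\vec{\mathcal{Y}})$, then $\vec{\mathcal{Z}}:=\mathcal{A}^H\star\vec{\widetilde{\mathcal{Q}}}_{j+1}-\vec{\widetilde{\mathcal{P}}}_{j+1}\star\widetilde{\bm{\alpha}}_{j+1}$ and $[\vec{\widetilde{\mathcal{P}}}_{j+2},\widetilde{\bm{\beta}}_{j+1}]:={\tt Normalize}(\vec{\mathcal{Z}})$. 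Defining $\widetilde{\mathcal{P}}_{j+1}:=[\widetilde{\mathcal{P}}_j,\vec{\widetilde{\mathcal{P}}}_{j+1}]$, $\widetilde{\mathcal{Q}}_{j+1}:=[\widetilde{\mathcal{Q}}_j,\vec{\widetilde{\mathcal{Q}}}_{j+1}]$, and extending $\widetilde{\mathcal{B}}_j$ by appending a zero row and a new column whose only nonvanishing tubes are $\widetilde{\bm{\beta}}_j$ in row $j$ and $\widetilde{\bm{\alpha}}_{j+1}$ in row $j+1$ yields an upper triangular $\widetilde{\mathcal{B}}_{j+1}$ and the two target relations at level $j+1$. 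The essential point is that $\vec{\mathcal{Y}}$ has no component along $\vec{\widetilde{\mathcal{Q}}}_i$ for $i<j$: pairing $\vec{\mathcal{Y}}$ with $\vec{\widetilde{\mathcal{Q}}}_i$, shifting $\mathcal{A}$ across the inner product, and using the inductive second relation (whose residual term vanishes in column $i<j$) leaves a combination of pairings $\left<\vec{\widetilde{\mathcal{P}}}_{j+1},\vec{\widetilde{\mathcal{P}}}_r\right>$ for $r\le j$, all zero by hypothesis. Orthogonality of $\vec{\mathcal{Z}}$ to $\widetilde{\mathcal{P}}_{j+1}$ is handled in the same spirit as the derivation of the scalar $\bm{\gamma}=\widetilde{\bm{\alpha}}_{k+1}$ in the proof of Theorem \ref{theo 9}.

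\textbf{The hardest part.} The main obstacle is guarding the Ritz block in rows and columns $1,\ldots,k$ of $\widetilde{\mathcal{B}}_m$ from contamination as $j$ grows beyond $k+1$: I must verify that $\left<\vec{\mathcal{Y}},\vec{\mathcal{U}}_{i,m}^\mathcal{A}\right>$ and $\left<\vec{\mathcal{Z}},\vec{\mathcal{V}}_{i,m}^\mathcal{A}\right>$ vanish for $1\le i\le k$ at every inductive step. This control relies on the structural identity $\mathcal{A}^H\star\vec{\mathcal{U}}_{i,m}^\mathcal{A}=\vec{\mathcal{V}}_{i,m}^\mathcal{A}\star\bm{s}_{i,m}^\mathcal{A}+\vec{\mathcal{P}}_{m+1}\star\bm{\rho}_i^H$ from \eqref{eq 24}, combined with the fact that each newly generated $\vec{\widetilde{\mathcal{P}}}_r$ with $r\ge k+2$ remains orthogonal to both the Ritz slices $\vec{\mathcal{V}}_{i,m}^\mathcal{A}$ and to $\vec{\mathcal{P}}_{m+1}$, an invariant that must be carried along with the main induction hypothesis. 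The reorthogonalization in lines 5 and 9 of Algorithm \ref{alg:6} is what keeps this invariant stable in floating point arithmetic.
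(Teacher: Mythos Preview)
Your proposal is correct and follows essentially the same inductive route as the paper: continue Lanczos bidiagonalization from the augmented pair $(\widetilde{\mathcal{P}}_{k+1},\widetilde{\mathcal{Q}}_{k+1})$ of Theorem~\ref{theo 9}, appending one lateral slice at a time until size $m$, and verify that the short recurrence already produces slices orthogonal to all earlier ones. The paper frames the orthogonality check by applying the full projector $(\mathcal{I}-\widetilde{\mathcal{Q}}_{j}\star\widetilde{\mathcal{Q}}_{j}^H)$ to $\mathcal{A}\star\vec{\widetilde{\mathcal{P}}}_{j+1}$ and using the transpose of the inductive relation \eqref{theo eq 2} to collapse it to $\mathcal{A}\star\vec{\widetilde{\mathcal{P}}}_{j+1}-\widetilde{\bm{\beta}}_{j}\star\vec{\widetilde{\mathcal{Q}}}_{j}$; since the Ritz block already sits inside that aggregated relation, your ``hardest part'' is absorbed automatically and no separate invariant about $\vec{\mathcal{P}}_{m+1}$ or the $\vec{\mathcal{V}}_{i,m}^\mathcal{A}$ needs to be carried.
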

	
	\begin{proof}
		Let the tensors $\widetilde{\mathcal{P}}_{k+1}$ and $\widetilde{\mathcal{Q}}_{k+1}$ 
		defined in \eqref{new 47} and \eqref{eq 41}, respectively, be represented by
		\[
		\widetilde{\mathcal{P}}_{k+1}=\left[\vec{\widetilde{\mathcal{P}}}_1,
		\vec{\widetilde{\mathcal{P}}}_2,\ldots,\vec{\widetilde{\mathcal{P}}}_{k+1}\right]\in
		\K^{p\times(k+1)}_n
		\]
		and 
		\[
		\widetilde{\mathcal{Q}}_{k+1}=\left[\vec{\widetilde{\mathcal{Q}}}_1,
		\vec{\widetilde{\mathcal{Q}}}_2,\ldots,\vec{\widetilde{\mathcal{Q}}}_{k+1}\right]\in
		\K^{\ell\times(k+1)}_n,
		\]
		and the tensor $\widetilde{\mathcal{P}}_{k+2}$ be given by 
		\[
		\widetilde{\mathcal{P}}_{k+2}=\left[\widetilde{\mathcal{P}}_{k+1}, 
		\vec{\widetilde{\mathcal{P}}}_{k+2}\right]\in\K^{p\times (k+2)}_n.
		\]
		By normalizing the quantity $\left(\mathcal{I}_\ell-\widetilde{\mathcal{Q}}_{k+1}\star
		\widetilde{\mathcal{Q}}_{k+1}^H\right)\star\mathcal{A}\star
		\vec{\widetilde{\mathcal{P}}}_{k+2}$, we obtain the lateral slice 
		$\vec{\widetilde{\mathcal{Q}}}_{k+2}$ such that 
		$\widetilde{\bm{\alpha}}_{k+2}\star\vec{\widetilde{\mathcal{Q}}}_{k+2}=
		\left(\mathcal{I}_\ell-\widetilde{\mathcal{Q}}_{k+1}\star 
		\widetilde{\mathcal{Q}}_{k+1}^H\right)\star\mathcal{A}\star
		\vec{\widetilde{\mathcal{P}}}_{k+2}$. Application of \eqref{theo eq 2} gives 
		\begin{eqnarray}\label{eq 3.41}
			\widetilde{\bm{\alpha}}_{k+2}\star\vec{\widetilde{\mathcal{Q}}}_{k+2}
			&=&\left(\mathcal{I}_\ell-\widetilde{\mathcal{Q}}_{k+1}\star
			\widetilde{\mathcal{Q}}_{k+1}^H\right)\star\mathcal{A}\star 
			\vec{\widetilde{\mathcal{P}}}_{k+2}\nonumber\\
			&=&\mathcal{A}\star\vec{\widetilde{\mathcal{P}}}_{k+2}-\widetilde{\mathcal{Q}}_{k+1}\star
			\widetilde{\mathcal{Q}}_{k+1}^H\star\mathcal{A}\star\vec{\widetilde{\mathcal{P}}}_{k+2} 
			\nonumber \\
			&=&\mathcal{A}\star\vec{\widetilde{\mathcal{P}}}_{k+2}-\widetilde{\mathcal{Q}}_{k+1}\star
			\left(\widetilde{\mathcal{B}}_{k+1}\star\widetilde{\mathcal{P}}_{k+1}^H+ 
			\widetilde{\bm{\beta}}_{k+1}\star\vec{\mathcal{E}}_{k+1}\star 
			\vec{\widetilde{\mathcal{P}}}_{k+2}^H\right)\star\vec{\widetilde{\mathcal{P}}}_{k+2}
			\nonumber \\
			&=&\mathcal{A}\star\vec{\widetilde{\mathcal{P}}}_{k+2}-\widetilde{\bm{\beta}}_{k+1}\star 
			\vec{\widetilde{\mathcal{Q}}}_{k+1}.
		\end{eqnarray}
		
		Consider the tensors 
		\[
		\widetilde{\mathcal{Q}}_{k+2}=\left[\widetilde{\mathcal{Q}}_{k+1}, 
		\vec{\widetilde{\mathcal{Q}}}_{k+2}\right]\in\K^{\ell\times (k+2)}_n
		\]
		and 
		\[
		\widetilde{\mathcal{B}}_{k+2}=\begin{bmatrix}
			\bm{s}_{1,m}^\mathcal{A} & \textbf{0} & \ldots & \textbf{0} & \bm{\rho}_1 & \textbf{0}\\
			\textbf{0} & \bm{s}_{2,m}^\mathcal{A} & \textbf{0} & \ldots & \bm{\rho}_2 & \textbf{0}\\
			\vdots & \ddots & \ddots & \ddots & \vdots & \vdots \\
			\vdots & \ddots & \ddots & \ddots & \vdots & \vdots \\
			\textbf{0} & \ldots & \textbf{0} & \bm{s}_{k,m}^\mathcal{A} & \bm{\rho}_k & \textbf{0}\\
			\textbf{0} & \ldots & \ldots & \textbf{0} & \widetilde{\bm{\alpha}}_{k+1} & 
			\widetilde{\bm{\beta}}_{k+1} \\
			\textbf{0} & \ldots & \ldots & \ldots & \textbf{0}  & \widetilde{\bm{\alpha}}_{k+2} 
		\end{bmatrix}\in\K^{(k+2)\times(k+2)}_n.
		\]
		Using \eqref{theo eq 1} and \eqref{eq 3.41}, we get 
		\[
		\mathcal{A}\star\widetilde{\mathcal{P}}_{k+2}=\widetilde{\mathcal{Q}}_{k+2}\star 
		\widetilde{\mathcal{B}}_{k+2}.
		\]
		
		To determine the lateral slice $\vec{\widetilde{\mathcal{P}}}_{k+3}$, we normalize 
		$\left(\mathcal{I}-\widetilde{\mathcal{P}}_{k+2}\star
		\widetilde{\mathcal{P}}_{k+2}^H\right)\star\mathcal{A}^H\star
		\vec{\widetilde{\mathcal{Q}}}_{k+2}$ so that 
		\[
		\widetilde{\bm{\beta}}_{k+2}\star \vec{\widetilde{\mathcal{P}}}_{k+3}= 
		\left(\mathcal{I}-\widetilde{\mathcal{P}}_{k+2}\star\widetilde{\mathcal{P}}_{k+2}^H\right)
		\star\mathcal{A}^H\star\vec{\widetilde{\mathcal{Q}}}_{k+2}
		\]
		and
		\begin{equation}\label{eq 3.44}
			\widetilde{\bm{\beta}}_{k+2}\star\vec{\widetilde{\mathcal{P}}}_{k+3}= 
			\mathcal{A}^H\star\vec{\widetilde{\mathcal{Q}}}_{k+2}-\widetilde{\bm{\alpha}}_{k+2}\star
			\vec{\widetilde{\mathcal{P}}}_{k+2}.
		\end{equation}
		It now follows from  \eqref{theo eq 1} and \eqref{eq 3.44} that
		\[
		\mathcal{A}^H \star\widetilde{\mathcal{Q}}_{k+2}=\widetilde{\mathcal{P}}_{k+2}\star 
		\widetilde{\mathcal{B}}_{k+2}^H+\widetilde{\bm{\beta}}_{k+2}\star
		\vec{\widetilde{\mathcal{P}}}_{k+3}\star\vec{\mathcal{E}}_{k+2}^H.
		\]
		
		We can continue this procedure until iteration $m-k$ and then obtain
		\[
		\mathcal{A}\star\widetilde{\mathcal{P}}_m=\widetilde{\mathcal{Q}}_m\star
		\widetilde{\mathcal{B}}_m,\quad \mathcal{A}^H \star \widetilde{\mathcal{Q}}_m= 
		\widetilde{\mathcal{P}}_m\star\widetilde{\mathcal{B}}_m^H+\widetilde{\bm{\beta}}_m\star
		\vec{\widetilde{\mathcal{P}}}_{m+1}\star\vec{\mathcal{E}}_m^H,
		\]
		where $\widetilde{\mathcal{P}}_m$ and $\widetilde{\mathcal{Q}}_m$ have orthonormal lateral
		slices and 
		\[
		\widetilde{\mathcal{B}}_m = \begin{bmatrix}
			\bm{s}_{1,m}^\mathcal{A} & \textbf{0} & \ldots & \bm{\rho}_1 & \textbf{0} & \ldots & 
			\textbf{0}\\
			& \ddots  &  & \vdots &  &  & \\
			&  & \bm{s}_{k,m}^\mathcal{A}  & \bm{\rho}_k &  &  &  \\
			&   &      & \widetilde{\bm{\alpha}}_{k+1} & \widetilde{\bm{\beta}}_{k+1}  &  \\
			&  &   &  &  \ddots & \ddots & \\
			& &   &   &  & \widetilde{\bm{\alpha}}_{m-1} &  \widetilde{\bm{\beta}}_{m-1}\\
			&  &  &   &   &  & \widetilde{\bm{\alpha}}_m
		\end{bmatrix}\in\K^{m\times m}_n.
		\]
		This gives the desired result.
	\end{proof}
	
	%%%%%%%%%%%%%%%%%%%%%%%%%%%%%%%
	
	If we would like to compute the smallest singular triplets of $\mathcal{A}$, then we can 
	use the same theorem, but instead of working with the first right singular lateral slices 
	$\vec{\mathcal{V}}_{i,m}^\mathcal{A}$ for $1\leq i \leq k$, we use the last $k$ right 
	singular lateral slices in \eqref{new 40}. The computations are analogous to those 
	described above.

	\subsection{Augmentation by harmonic Ritz lateral slices}
	When the smallest singular values of a matrix $A$ are clustered, their computation
	by the restarted Lanczos bidiagonalization method as described above may require many 
	iterations. In this situation it may be beneficial to instead compute approximations 
	of the smallest singular values of $A$ by seeking to determine approximations of the 
	largest singular values of the matrix $\left(A^T A\right)^{-1}$ without explicitly 
	computing the matrix $\left(A^T A\right)^{-1}$. This was done for the matrix case by using
	computing harmonic Ritz vectors; see \cite{baglama2005augmented,paige1995approximate}. 
	Harmonic Ritz vectors furnish approximations of 
	eigenvectors of $A^TA$ associated with the corresponding harmonic Ritz values. 
	
	In the case of tensors, harmonic Ritz lateral slices furnish approximations of eigenvectors of
	$\mathcal{A}^H\star\mathcal{A}$ associated with harmonic Ritz tubes of 
	$\mathcal{A}^H\star\mathcal{A}$. The harmonic Ritz tubes $\widecheck{\bm{\theta}}_j$ of
	$\mathcal{A}^H\star\mathcal{A}$ associated with the partial tensor tridiagonalization 
	defined in \eqref{new 27} are the eigentubes of the generalized eigenvalue 
	problem
	% LR: The notation is not clear in the equations below. The expression 
	% \vec{\widecheck{\mathcal{omega}}}_j prints funny. Please change to make sure
	% that the printed expression is what you want. Maybe the problem is the fonts
	% on the computer I am using. I will send you the pdf file I get.
	\begin{equation}\label{eq 52} 
		\left(\left(\mathcal{B}_m^H\star\mathcal{B}_m\right)^2+\bm{\alpha}_m^2\star\bm{\beta}_m^2
		\star\vec{\mathcal{E}}_m\star\vec{\mathcal{E}}_m^H\right)\star
		\vec{\widecheck{\mathcal{\omega}}}_j=\widecheck{\bm{\theta}}_j\star\mathcal{B}_m^H\star
		\mathcal{B}_m\star\vec{\widecheck{\mathcal{\omega}}}_j,\quad  1\leq j \leq m. 
	\end{equation}
	
	The eigenpair $\{\widecheck{\bm{\theta}}_j,\vec{\widecheck{\mathcal{\omega}}}_j\}$ can be 
	computed without forming the tensor $\mathcal{B}_m^H \star \mathcal{B}_m$. Let 
	\begin{equation}\label{eq 3.47}
		\vec{\mathcal{\omega}}_j= \mathcal{B}_m \star \vec{\widecheck{\mathcal{\omega}}}_j.
	\end{equation}
	Using the relations 
	\begin{equation*}
		\bm{\alpha}_m\star\vec{\mathcal{E}}_m^H =\vec{\mathcal{E}}_m^H\star\mathcal{B}_m
		\text{~~and~~}\bm{\alpha}_m\star \vec{\mathcal{E}}_m=\mathcal{B}_m^H\star
		\vec{\mathcal{E}}_m,
	\end{equation*}
	we can write
	\begin{equation*}
		\bm{\alpha}_{m}^2\star\bm{\beta}_m^2\star\vec{\mathcal{E}}_m\star
		\vec{\mathcal{E}}_m^H=\bm{\beta}_m^2\star\mathcal{B}_m^H\star\vec{\mathcal{E}}_m\star 
		\vec{\mathcal{E}}_m^H\star\mathcal{B}_m.
	\end{equation*}
	Therefore,  using \eqref{eq 3.47}, the relation \eqref{eq 52} can be written as 
	\begin{eqnarray*}
		\mathcal{B}_m^H\star\left(\mathcal{B}_m\star\mathcal{B}_m^H\star\mathcal{B}_m+
		\bm{\beta}_m^2\star\vec{\mathcal{E}}_m\star\vec{\mathcal{E}}_m^H\star
		\mathcal{B}_m\right)\star\mathcal{B}_m^{-1}\star\vec{\mathcal{\omega}}_j=
		\widecheck{\bm{\theta}}_j\star\mathcal{B}_m^H\star\mathcal{B}_m\star\mathcal{B}_m^{-1}
		\star\vec{\mathcal{\omega}}_j.
	\end{eqnarray*}
	It follows that
	\begin{equation}\label{eq 54}
		\left(\mathcal{B}_m\star\mathcal{B}_m^H+\bm{\beta}_m^2\star\vec{\mathcal{E}}_m\star
		\vec{\mathcal{E}}_m^H\right)\star\vec{\mathcal{\omega}}_j=\widecheck{\bm{\theta}}_j\star
		\vec{\mathcal{\omega}}_j
	\end{equation}
	and 
	\[
	\left(\mathcal{B}_m\star\mathcal{B}_m^H+\bm{\beta}_m^2\star\vec{\mathcal{E}}_m\star
	\vec{\mathcal{E}}_m^H\right)=\mathcal{B}_{m,m+1}\star\mathcal{B}_{m,m+1}^H.
	\]
	
	In this subsection, we denote the singular triplets of $\mathcal{B}_{m,m+1}$ by 
	$\{\bm{s}_i',\vec{\mathcal{U}}_i',\vec{\mathcal{V}}_i'\}$ for $1\leq i \leq m$, with
	the first $k$ of them being the smallest singular triplets. Recall that we are
	interested in determining approximations of the smallest singular triplets of 
	$\mathcal{A}$. The $k$ smallest singular triplets of $\mathcal{B}_{m,m+1}$ form the 
	tensors
	\begin{eqnarray*}
		\mathcal{U}_k'&=&\left[\vec{\mathcal{U}}_1',\vec{\mathcal{U}}_2',\ldots, 
		\vec{\mathcal{U}}_k'\right]\in\K^{m\times k}_n,\quad \mathcal{V}_k'=
		\left[\vec{\mathcal{V}}_1',\vec{\mathcal{V}}_2',\ldots,\vec{\mathcal{V}}_k'\right]\in
		\K^{(m+1)\times k}_{n},\\
		\mathcal{S}_k'&=&\left[\bm{s}_1'\star\vec{\mathcal{E}}_1,\bm{s}_2'\star
		\vec{\mathcal{E}}_2,\ldots,\bm{s}_k'\star\vec{\mathcal{E}}_k\right]\in\K^{k\times k}_n,
	\end{eqnarray*}
	where 
	\[
	\mathcal{B}_{m,m+1}\star\mathcal{V}_k'=\mathcal{U}_k'\star\mathcal{S}_k'{\rm ~~and~~ } 
	\mathcal{B}_{m,m+1}^H\star\mathcal{U}_k'=\mathcal{V}_k'\star \mathcal{S}_k'. 
	\]
	We obtain from the above equations that
	\[
	\mathcal{B}_{m,m+1}\star\mathcal{B}_{m,m+1}^H\star\mathcal{U}_k'=\mathcal{U}_k'\star 
	\left(\mathcal{S}_k'\right)^2,
	\]
	where
	\[
	\left(\mathcal{S}_k'\right)^2=\left[\left(\bm{s}_1'\right)^2\star\vec{\mathcal{E}}_1,
	\ldots,\left(\bm{s}_k'\right)^2\star\vec{\mathcal{E}}_k\right].
	\]
	Consequently, the eigenpair $\left\{\left(\bm{s}_i'\right)^2,\mathcal{U}_i'\right\}$ 
	satisfies \eqref{eq 54}, and 
	$\left\{\left(\bm{s}_i'\right)^2,\mathcal{B}_m^{-1}\star\mathcal{U}_i'\right\}$ is an
	eigenpair of \eqref{eq 52}. It follows that the harmonic Ritz lateral slice associated 
	with $\widecheck{\bm{\theta}}_j$ is given by 
	\begin{equation}\label{eq 3.56}
		\vec{\widecheck{\mathcal{V}}}_j=\mathcal{P}_m\star\vec{\widecheck{\mathcal{\omega}}}_j=
		\mathcal{P}_m\star\mathcal{B}_m^{-1}\star\vec{\mathcal{U}}_j'.
	\end{equation}
	
	We turn to the computation of the residual of harmonic Ritz lateral slices. Using eqs. 
	\eqref{new 27} and \eqref{eq 54}, we obtain the relations 
	\begin{eqnarray*}
		\mathcal{A}^H \star \mathcal{A}\star \vec{\widecheck{\mathcal{V}}}_j - \widecheck{\bm{\theta}}_j\star \vec{\widecheck{\mathcal{V}}}_j &=& \mathcal{A}^H \star \mathcal{A} \star \mathcal{P}_m \star \vec{\widecheck{\mathcal{\omega}}}_j - \widecheck{\bm{\theta}}_j \star \mathcal{P}_m \star \vec{\widecheck{\mathcal{\omega}}}_j \nonumber \\
		&=& \left(\mathcal{P}_m \star \mathcal{B}_m^H \star \mathcal{B}_m + \bm{\beta}_m \star \vec{\mathcal{E}}_m^H *\mathcal{B}_m \right)\star \vec{\widecheck{\omega}}_j - \widecheck{\bm{\theta}}_j \star \mathcal{P}_m \star \vec{\widecheck{\omega}}_j\nonumber \\
		&=& \mathcal{P}_m \star \mathcal{B}_m^{-1}\star \left( \mathcal{B}_m \star \mathcal{B}_m^H -\bm{\theta}_j*\mathcal{I}_m \right)\star \vec{\omega}_j + \bm{\beta}_m \star \vec{\mathcal{P}}_{m+1}\star \vec{\mathcal{E}}_m^H \star \vec{\omega}_j \nonumber \\
		&=& -\bm{\beta}_m^2 \star \mathcal{P}_m \star \mathcal{B}_m^{-1}*\vec{\mathcal{E}}_m \star \vec{\mathcal{E}}_m^H \star \vec{\omega}_j + \bm{\beta}_m*\vec{\mathcal{P}}_{m+1}\star \vec{\mathcal{E}}_m^H \star \vec{\omega}_j \nonumber \\ 
		&=& \vec{\mathcal{E}}_m^H \star \vec{\mathcal{\omega}}_j \star \bm{\beta}_m \left(\vec{\mathcal{P}}_{m+1} - \bm{\beta}_m \star \mathcal{P}_m \star \mathcal{B}_m^{-1}\star \vec{\mathcal{E}}_m\right).
	\end{eqnarray*}
	It follows that the residual can be expressed as 
	\begin{equation}\label{eq 3.58}
		\vec{\widecheck{\mathcal{R}}}_m=\vec{\mathcal{P}}_{m+1}-\bm{\beta}_m\star\mathcal{P}_m 
		\star\mathcal{B}_m^{-1}\star\vec{\mathcal{E}}_m.
	\end{equation}
	
	We now proceed analogously as in the previous subsection, i.e., we use the smallest
	harmonic Ritz eigentubes of $\mathcal{B}_{m+1,m}^H\star\mathcal{B}_{m+1,m}$ and associated
	eigenslices to approximate the 
	$k$ smallest singular triplets of $\mathcal{A}$. This yields relations that are analogous 
	to \eqref{eq 18} and \eqref{eq 19}. The following theorem provides the details.
	
	\begin{theo}\label{theo 10}
		Apply $m$ steps of Algorithm \ref{alg:6} to the third-order tensor $\mathcal{A}$ and assume
		that the tensor $\mathcal{B}_m$ in \eqref{eq 18} and \eqref{eq 19} is invertible. Then, 
		for $k=1,\ldots,m-1$, we have the relations
		\begin{eqnarray}
			\mathcal{A}\star \widecheck{\mathcal{P}}_{k+1}&=&\widecheck{\mathcal{Q}}_{k+1}\star
			\widecheck{\mathcal{B}}_{k+1}, \label{theo2 eq1}\\
			\mathcal{A}^H\star\widecheck{\mathcal{Q}}_{k+1}&=&\widecheck{\mathcal{P}}_{k+1}\star
			\widecheck{\mathcal{B}}_{k+1}^H+\widecheck{\bm{\beta}}_{k+1}\star 
			\vec{\widecheck{\mathcal{P}}}_{k+2}\star \vec{\mathcal{E}}^H_{k+1},\label{theo2 eq2}
		\end{eqnarray}
		where $\widecheck{\mathcal{P}}_{k+1}\in\K^{p\times(k+1)}_n$ and 
		$\widecheck{\mathcal{Q}}_{k+1}\in\K^{\ell\times(k+1)}_n$ have orthonormal lateral slices 
		and $\widecheck{\mathcal{B}}_{k+1}\in\K^{(k+1)\times (k+1)}_n$ is an upper triangular 
		tensor, where the $k$ first lateral slices of $\widecheck{\mathcal{P}}_{k+1}$ are 
		a t-linear combination of the $k$ first harmonic Ritz lateral slices of $\mathcal{A}$
		with $\vec{\widecheck{\mathcal{P}}}_{k+2}\in\K^{p}_n$ is orthogonal to 
		$\widecheck{\mathcal{P}}_{k+1}$. Moreover, $\vec{\mathcal{E}}_{k+1}\in\K^m_n$ is the 
		canonical lateral slice under the t-product.
	\end{theo}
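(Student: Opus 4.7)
The plan is to mirror the structure of the proof of Theorem \ref{theo 9}, replacing ordinary Ritz slices by the harmonic Ritz slices $\vec{\widecheck{\mathcal{V}}}_j=\mathcal{P}_m\star\mathcal{B}_m^{-1}\star\vec{\mathcal{U}}'_j$ from \eqref{eq 3.56}. A technical complication absent from Theorem \ref{theo 9} is that these harmonic Ritz slices are not automatically orthonormal, which is why the theorem statement speaks of t-linear combinations of them rather than the slices themselves. The first step is therefore to compute a t-QR factorization
\[
[\vec{\widecheck{\mathcal{V}}}_1,\ldots,\vec{\widecheck{\mathcal{V}}}_k]=\widecheck{\mathcal{W}}_k\star\widecheck{\mathcal{R}}_k,
\]
with $\widecheck{\mathcal{W}}_k\in\K^{p\times k}_n$ orthonormal and $\widecheck{\mathcal{R}}_k\in\K^{k\times k}_n$ upper triangular, so that the lateral slices of $\widecheck{\mathcal{W}}_k$ are the desired t-linear combinations. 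I then form $\widecheck{\mathcal{P}}_{k+1}=[\widecheck{\mathcal{W}}_k,\vec{\widecheck{\mathcal{P}}}_{k+1}]$, where $\vec{\widecheck{\mathcal{P}}}_{k+1}\in\K^p_n$ is the unit-norm slice obtained by orthogonalizing $\mathcal{P}_m\star\mathcal{B}_m^{-1}\star\vec{\mathcal{E}}_m$ against $\widecheck{\mathcal{W}}_k$; the relevance of this particular choice becomes clear in the last step via the residual identity \eqref{eq 3.58}.

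To establish \eqref{theo2 eq1}, the key tool is the telescoping identity $\mathcal{A}\star\vec{\widecheck{\mathcal{V}}}_j=\mathcal{Q}_m\star\mathcal{B}_m\star\mathcal{B}_m^{-1}\star\vec{\mathcal{U}}'_j=\mathcal{Q}_m\star\vec{\mathcal{U}}'_j$, which yields $\mathcal{A}\star\widecheck{\mathcal{W}}_k=(\mathcal{Q}_m\star\mathcal{U}'_k)\star\widecheck{\mathcal{R}}_k^{-1}$, where $\mathcal{Q}_m\star\mathcal{U}'_k$ has orthonormal lateral slices as a product of tensors with orthonormal slices. Orthogonalizing $\mathcal{A}\star\vec{\widecheck{\mathcal{P}}}_{k+1}$ against $\mathcal{Q}_m\star\mathcal{U}'_k$ and normalizing the remainder produces $\widecheck{\mathcal{Q}}_{k+1}$ with orthonormal slices and an upper triangular $\widecheck{\mathcal{B}}_{k+1}$ whose leading $k\times k$ block is $\widecheck{\mathcal{R}}_k^{-1}$.

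For \eqref{theo2 eq2}, combining \eqref{eq 19} with \eqref{new 25} gives $\mathcal{A}^H\star\mathcal{Q}_m=\mathcal{P}_{m+1}\star\mathcal{B}_{m,m+1}^H$, so the first $k$ columns of $\mathcal{A}^H\star\widecheck{\mathcal{Q}}_{k+1}$ reduce to $\mathcal{P}_{m+1}\star\mathcal{V}'_k\star\mathcal{S}'_k$. The hard part will be showing that this quantity lies in $\mathrm{span}(\widecheck{\mathcal{P}}_{k+1})$, so that the residual in \eqref{theo2 eq2} is confined to the last-column direction via $\vec{\mathcal{E}}_{k+1}^H$. This is precisely where the harmonic Ritz structure is essential: using the eigenvalue relation $(\mathcal{B}_m\star\mathcal{B}_m^H+\bm{\beta}_m^2\star\vec{\mathcal{E}}_m\star\vec{\mathcal{E}}_m^H)\star\mathcal{U}'_k=\mathcal{U}'_k\star(\mathcal{S}'_k)^2$ from \eqref{eq 54}, together with the top-$m$ block of $\mathcal{B}_{m,m+1}^H\star\mathcal{U}'_k=\mathcal{V}'_k\star\mathcal{S}'_k$, a short manipulation yields
\[
\mathcal{P}_m\star\mathcal{V}'_k(1{:}m,:)\star\mathcal{S}'_k=\widecheck{\mathcal{V}}_k\star(\mathcal{S}'_k)^2-\bm{\beta}_m^2\star(\mathcal{P}_m\star\mathcal{B}_m^{-1}\star\vec{\mathcal{E}}_m)\star(\vec{\mathcal{E}}_m^H\star\mathcal{U}'_k),
\]
which together with the $\vec{\mathcal{P}}_{m+1}$-component and the residual identity \eqref{eq 3.58} places $\mathcal{P}_{m+1}\star\mathcal{V}'_k\star\mathcal{S}'_k$ into $\mathrm{span}(\widecheck{\mathcal{P}}_{k+1})$ by virtue of the chosen augmenting slice $\vec{\widecheck{\mathcal{P}}}_{k+1}$. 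Finally, orthogonalizing $\mathcal{A}^H\star\vec{\widecheck{\mathcal{Q}}}_{k+1}$ against $\widecheck{\mathcal{P}}_{k+1}$ and normalizing the remainder produces $\widecheck{\bm{\beta}}_{k+1}$ and $\vec{\widecheck{\mathcal{P}}}_{k+2}$, completing the proof.
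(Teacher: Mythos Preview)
Your overall strategy is sound and tracks the paper's proof closely, but your choice of the augmenting slice $\vec{\widecheck{\mathcal{P}}}_{k+1}$ is wrong, and this breaks the argument for \eqref{theo2 eq2}. You take $\vec{\widecheck{\mathcal{P}}}_{k+1}$ to be the orthonormalization of $\mathcal{P}_m\star\mathcal{B}_m^{-1}\star\vec{\mathcal{E}}_m$ against $\widecheck{\mathcal{W}}_k$, so your entire tensor $\widecheck{\mathcal{P}}_{k+1}$ lives in ${\tt span}(\mathcal{P}_m)$. But the first $k$ columns of $\mathcal{A}^H\star\widecheck{\mathcal{Q}}_{k+1}$ are $\mathcal{A}^H\star\mathcal{Q}_m\star\mathcal{U}'_k=\mathcal{P}_{m+1}\star\mathcal{V}'_k\star\mathcal{S}'_k$, and since $\mathcal{V}'_k\in\K^{(m+1)\times k}_n$ has a nontrivial last row $\vec{\mathcal{E}}_{m+1}^H\star\mathcal{V}'_k$, these columns carry a genuine $\vec{\mathcal{P}}_{m+1}$ component. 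That component is orthogonal to ${\tt span}(\mathcal{P}_m)$ and therefore to your $\widecheck{\mathcal{P}}_{k+1}$; it cannot be absorbed into $\widecheck{\mathcal{P}}_{k+1}\star\widecheck{\mathcal{B}}_{k+1}^H$, so the residual spills into all $k$ columns rather than being confined to $\vec{\mathcal{E}}_{k+1}^H$.

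The fix is exactly what \eqref{eq 3.58} is pointing you toward: the augmenting direction must be the harmonic residual $\vec{\widecheck{\mathcal{R}}}_m=\vec{\mathcal{P}}_{m+1}-\bm{\beta}_m\star\mathcal{P}_m\star\mathcal{B}_m^{-1}\star\vec{\mathcal{E}}_m$, not just its $\mathcal{P}_m$-part. The paper implements this by stacking $[\bm{s}'_1\star\vec{\widecheck{\mathcal{V}}}_1,\ldots,\bm{s}'_k\star\vec{\widecheck{\mathcal{V}}}_k,\vec{\widecheck{\mathcal{R}}}_m]=\mathcal{P}_{m+1}\star\mathcal{J}_{k+1}$ with
\[
\mathcal{J}_{k+1}=\begin{bmatrix}\mathcal{B}_m^{-1}\star\mathcal{U}'_k\star\mathcal{S}'_k & -\bm{\beta}_m\star\mathcal{B}_m^{-1}\star\vec{\mathcal{E}}_m\\ \textbf{0} & \bm{e}\end{bmatrix},
\]
then taking a single reduced t-QR factorization $\mathcal{J}_{k+1}=\mathcal{Q}'_{k+1}\star\mathcal{R}'_{k+1}$ and setting $\widecheck{\mathcal{P}}_{k+1}=\mathcal{P}_{m+1}\star\mathcal{Q}'_{k+1}$. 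The identity $\mathcal{V}'_k=\mathcal{J}_{k+1}\star\bigl[\mathcal{I}_k;\,\vec{\mathcal{E}}_{m+1}^H\star\mathcal{V}'_k\bigr]$ then shows immediately that $\mathcal{P}_{m+1}\star\mathcal{V}'_k\star\mathcal{S}'_k$ lies in the range of $\widecheck{\mathcal{P}}_{k+1}$, which is precisely the inclusion you were trying to get. If you replace your $\vec{\widecheck{\mathcal{P}}}_{k+1}$ with (an orthonormalized version of) $\vec{\widecheck{\mathcal{R}}}_m$, the rest of your outline goes through.
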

	
	\begin{proof} 
		Let $\{\vec{\widecheck{\mathcal{V}}}_i\}_{i=1:k}$ be the first $k$ harmonic Ritz lateral 
		slices of $\mathcal{A}$. Using \eqref{eq 3.56} and \eqref{eq 3.58}, we get 
		\begin{eqnarray*}
			\left[\bm{s}_1'\star\vec{\widecheck{\mathcal{V}}}_1,\bm{s}_2'\star 
			\vec{\widecheck{\mathcal{V}}}_2,\ldots,\bm{s}_k'\star
			\vec{\widecheck{\mathcal{V}}}_k,\vec{\widecheck{\mathcal{R}}}_m\right]
			&=&\left[\mathcal{P}_m,\vec{\mathcal{P}}_{m+1}\right]\star 
			\begin{bmatrix}
				\mathcal{B}_m^{-1}\star\mathcal{U}_k'\star\mathcal{S}_k'&-\bm{\beta}_m \star
				\mathcal{B}_m^{-1}\star\vec{\mathcal{E}}_m\\
				\textbf{0} & \bm{e}      	
			\end{bmatrix} \\
			&=&\mathcal{P}_{m+1}\star\begin{bmatrix}
				\mathcal{B}_m^{-1}\star\mathcal{U}_k'\star\mathcal{S}_k'&-\bm{\beta}_m\star
				\mathcal{B}_m^{-1}\star\vec{\mathcal{E}}_m\\
				\textbf{0} & \bm{e}      	
			\end{bmatrix}.
		\end{eqnarray*}
		
		Define the tensor 
		\begin{equation}\label{new 76}
			\mathcal{J}_{k+1}=\begin{bmatrix}
				\mathcal{B}_m^{-1}\star \mathcal{U}_k'\star \mathcal{S}_k' & -\bm{\beta}_m \star
				\mathcal{B}_m^{-1}\star \vec{\mathcal{E}}_m\\
				\textbf{0} & \bm{e}      	
			\end{bmatrix}.
		\end{equation}
		Using the reduced t-QR factorization of $\mathcal{J}_{k+1}$, we get
		\[
		\mathcal{J}_{k+1}=\mathcal{Q}_{k+1}'\star\mathcal{R}_{k+1}',
		\]
		where $\mathcal{Q}_{k+1}'\in\K^{(m+1)\times (k+1)}_n$ has orthonormal lateral slices and
		$\mathcal{R}_{k+1}'\in\K^{(k+1)\times(k+1)}_n$ is an f-upper triangular tensor. This
		factorization can be computed by a simple modification of Algorithm \ref{alg:3}.
		
		Let
		\begin{equation}\label{new 77}
			\widecheck{\mathcal{P}}_{k+1}=\left[\vec{\widecheck{\mathcal{P}}}_1, 
			\vec{\widecheck{\mathcal{P}}}_2,\ldots,\vec{\widecheck{\mathcal{P}}}_{k+1}\right]=
			\mathcal{P}_{m+1}\star\mathcal{Q}_{k+1}'\in\K^{\ell\times (k+1)}_n. 
		\end{equation}
		Then 
		\begin{eqnarray*}
			\mathcal{A}\star\widecheck{\mathcal{P}}_{k+1}&=&\mathcal{A}\star\mathcal{P}_{m+1}\star 
			\mathcal{Q}_{k+1}' \nonumber \\
			&=&\left[\mathcal{A}\star \mathcal{P}_m,\mathcal{A}\star\vec{\mathcal{P}}_{m+1}\right]
			\star\mathcal{Q}_{k+1}' \\
			&=&\left[\mathcal{A}\star\mathcal{P}_m,\mathcal{A}\star\vec{\mathcal{P}}_{m+1}\right]\star
			\mathcal{J}_{k+1}\star\left(\mathcal{R}_{k+1}'\right)^{-1} \\
			&=&\left[\mathcal{A}\star\mathcal{P}_{m}\star\mathcal{B}_m^{-1}\star\mathcal{U}_k'\star
			\mathcal{S}_k',\mathcal{A}\star\vec{\mathcal{P}}_{m+1}-\mathcal{A}\star\mathcal{P}_m \star 
			\bm{\beta}_m\star\mathcal{B}_m^{-1}\star\vec{\mathcal{E}}_m\right]\star
			\left(\mathcal{R}_{k+1}'\right)^{-1} \\
			&=&\left[\mathcal{Q}_m\star\mathcal{U}_k'\star\mathcal{S}_k',\mathcal{A}\star 
			\vec{\mathcal{P}}_{m+1}-\vec{\mathcal{Q}}_m\star\bm{\beta}_m\right]\star 
			\left(\mathcal{R}_{k+1}'\right)^{-1}.
		\end{eqnarray*}
		Define 
		\begin{equation}\label{new 80}
			\widecheck{\mathcal{Q}}_k=\mathcal{Q}_m\star\mathcal{U}_k'\in\K^{p\times k}_n.
		\end{equation}
		Using the orthogonality of 
		$\mathcal{A}\star\vec{\mathcal{P}}_{m+1}-\bm{\beta}_m\star\vec{\mathcal{Q}}_m$ against the
		lateral slices of $\widecheck{\mathcal{Q}}_k$ gives 
		\begin{equation}\label{eq 3.66}
			\widecheck{\bm{\alpha}}_{k+1}\star\vec{\widecheck{\mathcal{Q}}}_{k+1}=-\bm{\beta}_m\star
			\vec{\mathcal{Q}}_m+\mathcal{A}\star \vec{\mathcal{P}}_{m+1}-\widecheck{\mathcal{Q}}_k
			\star\begin{bmatrix}
				\widecheck{\bm{\gamma}}_1 \\
				\widecheck{\bm{\gamma}}_2 \\
				\vdots \\
				\widecheck{\bm{\gamma}}_k \\
			\end{bmatrix},
		\end{equation}
		where $\left\Vert\vec{\widecheck{\mathcal{Q}}}_{k+1}\right\Vert=1$ and 
		$\widecheck{\bm{\alpha}}_{k+1}$ is the tube obtained from the normalization of the tensor 
		\[
		-\bm{\beta}_m\star\vec{\mathcal{Q}}_m+\mathcal{A}\star\vec{\mathcal{P}}_{m+1}- 
		\widecheck{\mathcal{Q}}_k\star \begin{bmatrix}
			\widecheck{\bm{\gamma}}_1 \\
			\widecheck{\bm{\gamma}}_2 \\
			\vdots \\
			\widecheck{\bm{\gamma}}_k \\
		\end{bmatrix}
		\]
		with 
		\[
		\widecheck{\mathcal{Q}}_k^H\star\left(-\bm{\beta}_m\star\vec{\mathcal{Q}}_m+
		\mathcal{A}\star \vec{\mathcal{P}}_{m+1}\right)=\begin{bmatrix}
			\widecheck{\bm{\gamma}}_1 \\
			\widecheck{\bm{\gamma}}_2 \\
			\vdots \\
			\widecheck{\bm{\gamma}}_k \\
		\end{bmatrix}.
		\]
		It follows from \eqref{new 80} and \eqref{eq 3.66} that
		\begin{eqnarray*}
			\mathcal{A}\star\widecheck{\mathcal{P}}_{k+1}
			&=&\begin{bmatrix}
				\mathcal{Q}_m\star\mathcal{U}_k'\star\mathcal{S}_k',\widecheck{\bm{\alpha}}_{k+1}\star 
				\vec{\widecheck{\mathcal{Q}}}_{k+1}+\widecheck{\mathcal{Q}}_k\star
				\begin{bmatrix}
					\widecheck{\bm{\gamma_1}}\\
					\vdots\\
					\widecheck{\bm{\gamma}}_k
				\end{bmatrix}
			\end{bmatrix}\star 
			\left(\mathcal{R}'_{k+1}\right)^{-1} \\
			&=&\left[\mathcal{Q}_m\star\mathcal{U}_k',\vec{\widecheck{\mathcal{Q}}}_{k+1}\right]\star 
			\begin{bmatrix}
				\bm{s}_1' &   &   &  &  \widecheck{\bm{\gamma}}_1 \\
				&  \ddots &    &   &  \vdots \\
				&    &   &  \bm{s}_k' & \widecheck{\bm{\gamma}}_k \\
				&   &   &   &  \widecheck{\bm{\alpha}}_{k+1}
			\end{bmatrix} \star \left(\mathcal{R}'_{k+1}\right)^{-1}.
		\end{eqnarray*}
		Hence,
		\begin{equation}
			\mathcal{A}\star\widecheck{\mathcal{P}}_{k+1}= \widecheck{\mathcal{Q}}_{k+1}\star 
			\widecheck{\mathcal{B}}_{k+1}, \label{eq 73}
		\end{equation}
		with 
		\begin{equation}\label{new 84}
			\widecheck{\mathcal{B}}_{k+1}=\begin{bmatrix}
				\bm{s}_1' &   &   &  &  \widecheck{\bm{\gamma}}_1 \\
				&  \ddots &    &   &  \vdots \\
				&    &   &  \bm{s}_k' & \widecheck{\bm{\gamma}}_k \\
				&   &   &   &  \widecheck{\bm{\alpha}}_{k+1}
			\end{bmatrix}\star \left(\mathcal{R}_{k+1}'\right)^{-1}\in\K^{(k+1)\times(k+1)}_n,
		\end{equation}
		where $\widecheck{\mathcal{B}}_{k+1}$ is an upper triangular tensor as it is the t-product
		of two upper triangular tensors.
		
		To show \eqref{theo2 eq2}, we first notice that 
		\[
		\mathcal{A}^H\star\widecheck{\mathcal{Q}}_k=\mathcal{A}^H\star\mathcal{Q}_m\star
		\mathcal{U}_k'=\mathcal{P}_{m+1}\star\mathcal{B}_{m,m+1}^H\star\mathcal{U}_k'= 
		\mathcal{P}_{m+1}\star\mathcal{V}_k'\star\mathcal{S}_k'.
		\]
		Using the fact that 
		\[
		\mathcal{B}_{m,m+1}=\left[\mathcal{B}_m,\bm{\beta}_m\star\vec{\mathcal{E}}_m\right]=
		\mathcal{B}_m\star\left[\mathcal{I}_m,\bm{\beta}_m\star\mathcal{B}_m^{-1}\star 
		\vec{\mathcal{E}}_m\right],
		\]
		we get 
		\[
		\mathcal{B}_{m,m+1}\star\mathcal{V}_k'=\mathcal{U}_k'\star\mathcal{S}_k'\Leftrightarrow 
		\left[\mathcal{I}_m,\bm{\beta}_m \star \mathcal{B}_m^{-1}\star\vec{\mathcal{E}}_m\right]
		\star\mathcal{V}_k'=\mathcal{B}_m^{-1}\star\mathcal{U}_k'\star\mathcal{S}_k'.
		\]
		It follows from the above result that
		\[
		\mathcal{V}_k'=\begin{bmatrix}
			\mathcal{B}_m^{-1} \star \mathcal{U}_k'\star\mathcal{S}_k & 
			-\bm{\beta}_m \star \mathcal{B}_m^{-1}\star \vec{\mathcal{E}}_m \\
			\textbf{0} & \bm{e}
		\end{bmatrix}\star 
		\begin{bmatrix}
			\mathcal{I}_k \\
			\vec{\mathcal{E}}_{m+1}^H \star \mathcal{V}_k'
		\end{bmatrix}
		=\mathcal{J}_{k+1}\star 
		\begin{bmatrix}
			\mathcal{I}_k \\
			\vec{\mathcal{E}}_{m+1}^H \star \mathcal{V}_k'
		\end{bmatrix}.
		\]
		We obtain
		\begin{eqnarray*}
			\mathcal{A}^H\star\widecheck{\mathcal{Q}}_k 
			&=&\mathcal{A}^H\star\mathcal{Q}_m\star\mathcal{U}_k' \\
			&=&\mathcal{P}_{m+1}\star\mathcal{B}_{m,m+1}\star\mathcal{U}_k'\\
			&=&\mathcal{P}_{m+1}\star\mathcal{V}_k'\star\mathcal{S}_k' \\
			&=&\mathcal{P}_{m+1}\star\mathcal{J}_{k+1}\star 
			\begin{bmatrix}
				\mathcal{I}_k \\
				\vec{\mathcal{E}}_{m+1}^H \star \mathcal{V}_k'
			\end{bmatrix}\star\mathcal{S}_k' \\
			&=&\mathcal{P}_{m+1}\star\mathcal{Q}_{k+1}'\star\mathcal{R}_{k+1}'\star 
			\begin{bmatrix}
				\mathcal{I}_k \\
				\vec{\mathcal{E}}_{m+1}^H \star \mathcal{V}_k'
			\end{bmatrix} \star \mathcal{S}_k' \\
			&=&\widecheck{\mathcal{P}}_{k+1}\star\mathcal{R}_{k+1}'\star 
			\begin{bmatrix}
				\mathcal{I}_k \\
				\vec{\mathcal{E}}_{m+1}^H \star \mathcal{V}_k'
			\end{bmatrix}\star\mathcal{S}_k'.
		\end{eqnarray*}
		The relation \eqref{eq 73} now yields
		\[
		\widecheck{\mathcal{Q}}_k^H\star\mathcal{A}\star\widecheck{\mathcal{P}}_{k+1}=
		\widecheck{\mathcal{B}}_{k,k+1}\Leftrightarrow\widecheck{\mathcal{P}}_{k+1}^H\star
		\mathcal{A}^H\star\widecheck{\mathcal{Q}}_k=\widecheck{\mathcal{B}}_{k,k+1}^H,
		\]
		%LR: I changed the superscripts of \K. Please check.
		where $\widecheck{\mathcal{B}}_{k,k+1}\in\K^{(k+1)\times k}_n$ is the subtensor of 
		$\widecheck{\mathcal{B}}_{k+1}$, which is obtained by removing the last horizontal slice 
		of $\widecheck{\mathcal{B}}_{k+1}$. Then
		\[
		\widecheck{\mathcal{P}}_{k+1}^H\star\mathcal{A}^H\star\widecheck{\mathcal{Q}}_k=
		\mathcal{R}_{k+1}'\star 
		\begin{bmatrix}
			\mathcal{I}_k \\
			\vec{\mathcal{E}}_{m+1}^H \star \mathcal{V}_k'
		\end{bmatrix}\star\mathcal{S}_k'=\widecheck{\mathcal{B}}_{k,k+1}^H
		\]
		and 
		\[
		\widecheck{\mathcal{P}}_{k+1}^H\star\mathcal{A}^H\star\vec{\widecheck{\mathcal{Q}}}_{k+1}=
		\widecheck{\mathcal{B}}_{k+1}^H\star\widecheck{\mathcal{Q}}_{k+1}^H\star
		\vec{\widecheck{\mathcal{Q}}}_{k+1}=\widecheck{\mathcal{B}}_{k+1}^H\star 
		\vec{\mathcal{E}}_{k+1}=\widecheck{\bm{\alpha}}_{k+1}\star\vec{\mathcal{E}}_{k+1}. 
		\]
		Hence,
		\begin{equation}\label{new 93}
			\mathcal{A}^H\star\vec{\widecheck{\mathcal{Q}}}_{k+1}=\widecheck{\bm{\alpha}}_{k+1}\star 
			\vec{\widecheck{\mathcal{P}}}_{k+1}+\vec{\widecheck{\mathcal{R}}}_{k+1}'
		\end{equation}
		with $\vec{\widecheck{\mathcal{R}}}_{k+1}'\perp\widecheck{\mathcal{P}}_{k+1}$. It follows 
		that 
		\[
		\mathcal{A}^H\star\widecheck{\mathcal{Q}}_{k+1}=\widecheck{\mathcal{P}}_{k+1}\star
		\widecheck{\mathcal{B}}_{k+1}^H+\vec{\widecheck{\mathcal{R}}}_{k+1}'\star 
		\vec{\mathcal{E}}_{k+1}^H.
		\]
		Normalization of $\vec{\widecheck{\mathcal{R}}}_{k+1}'$ gives
		\[
		\mathcal{A}^H\star\widecheck{\mathcal{Q}}_{k+1}=\widecheck{\mathcal{P}}_{k+1}\star 
		\widecheck{\mathcal{B}}_{k+1}^H +\widecheck{\bm{\beta}}_{k+1}\star 
		\vec{\widecheck{\mathcal{P}}}_{k+2}\star\vec{\mathcal{E}}_{k+1}^H.
		\]
		The orthonormality of the lateral slices of $\widecheck{\mathcal{P}}_{k+1}$ and 
		$\widecheck{\mathcal{Q}}_{k+1}$ holds by the construction of these tensors. Specifically, 
		it follows from \eqref{new 77} that the lateral slices of $\widecheck{\mathcal{P}}_{k+1}$
		are orthonormal. Due to \eqref{new 80}, the first $k$ lateral slices of 
		$\widecheck{\mathcal{Q}}_{k+1}$ are orthonormal.
	\end{proof}
	
	Notice that if $\widecheck{\bm{\beta}}_{k+1}$ given in \eqref{theo2 eq2} vanishes, then 
	we have determined $k$ singular triplets, i.e., these singular triplets of $\mathcal{A}$ 
	can be computed by using the singular triplets of $\widecheck{\mathcal{B}}_{k+1}$, as well
	as $\widecheck{\mathcal{P}}_{k+1}$ and $\widecheck{\mathcal{Q}}_{k+1}$ defined in 
	\eqref{theo2 eq1} and \eqref{theo2 eq2}. If $\widecheck{\bm{\beta}}_{k+1}$ does not
	vanish, then we append new lateral slices to $\widecheck{\mathcal{P}}_{k+1}$ and 
	$\widecheck{\mathcal{Q}}_{k+1}$ in a similar way as we did in the previous subsection. 
	The following result is analogous to Theorem \ref{prop 1}.
	
	\begin{theo}
		Carry out $m$ steps of Algorithm \ref{alg:6} and assume that eqs \eqref{theo2 eq1} and
		\eqref{theo2 eq2} hold for $k=1,2,\ldots,m-1$. Further, let $\widecheck{\bm{\beta}}_{k+1}$ 
		in \eqref{theo2 eq2} be nonvanishing. Then we have the following relations 
		\begin{eqnarray*}
			\mathcal{A}\star\widecheck{\mathcal{P}}_{m}&=&\widecheck{\mathcal{Q}}_m\star 
			\widecheck{\mathcal{B}}_m,\\
			\mathcal{A}^H\star\widecheck{\mathcal{Q}}_m&=&\widecheck{\mathcal{P}}_m\star 
			\widecheck{\mathcal{B}}_m^H+\widecheck{\bm{\beta}}_m\star 
			\vec{\widecheck{\mathcal{P}}}_{m+1}\star \vec{\mathcal{E}}^H, 
		\end{eqnarray*}
		where $\widecheck{\mathcal{P}}_m\in\K^{p\times m}_n$ and 
		$\widecheck{\mathcal{Q}}_m\in\K^{\ell\times m}_n$ are orthonormal tensors,
		$\widecheck{\mathcal{B}}_m\in\K^{m\times m}_n$ is an upper triangular tensor, 
		$\widecheck{\bm{\beta}}_m$ is a tube of $n$ elements, 
		$\vec{\widecheck{\mathcal{P}}}_{m+1}\in\K^p_n$ is orthogonal to all the lateral slices of
		$\widecheck{\mathcal{P}}_m$ and $\vec{\mathcal{E}}^H\in\K^\ell_n$ is the canonical lateral
		slice under the t-product, where the first $k$ lateral slices of 
		$\widecheck{\mathcal{P}}_m$ and $\widecheck{\mathcal{Q}}_m$ are the same as the lateral 
		slices of $\widecheck{\mathcal{P}}_{k+1}$ and $\widecheck{\mathcal{Q}}_{k+1}$, 
		respectively, given in Theorem \ref{theo 10}.
	\end{theo}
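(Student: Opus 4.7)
The plan is to mirror the proof of Theorem \ref{prop 1}, proceeding by induction on $j$ from $j=k+1$ (the base case provided directly by Theorem \ref{theo 10}) up to $j=m$. At each stage, given the two relations
\[
\mathcal{A}\star\widecheck{\mathcal{P}}_j=\widecheck{\mathcal{Q}}_j\star\widecheck{\mathcal{B}}_j, \qquad \mathcal{A}^H\star\widecheck{\mathcal{Q}}_j=\widecheck{\mathcal{P}}_j\star\widecheck{\mathcal{B}}_j^H+\widecheck{\bm{\beta}}_j\star\vec{\widecheck{\mathcal{P}}}_{j+1}\star\vec{\mathcal{E}}_j^H,
\]
with $\widecheck{\bm{\beta}}_j$ nonvanishing (so that $\vec{\widecheck{\mathcal{P}}}_{j+1}$ is well defined and orthogonal to $\widecheck{\mathcal{P}}_j$), I would define the next lateral slice $\vec{\widecheck{\mathcal{Q}}}_{j+1}$ by normalizing the projection
\[
\widecheck{\bm{\alpha}}_{j+1}\star\vec{\widecheck{\mathcal{Q}}}_{j+1}=\left(\mathcal{I}_\ell-\widecheck{\mathcal{Q}}_j\star\widecheck{\mathcal{Q}}_j^H\right)\star\mathcal{A}\star\vec{\widecheck{\mathcal{P}}}_{j+1},
\]
exactly as in \eqref{eq 3.41}. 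Substituting the induction hypothesis for $\mathcal{A}^H\star\widecheck{\mathcal{Q}}_j$ (read as $\widecheck{\mathcal{Q}}_j^H\star\mathcal{A}=\widecheck{\mathcal{B}}_j\star\widecheck{\mathcal{P}}_j^H+\vec{\mathcal{E}}_j\star\vec{\widecheck{\mathcal{P}}}_{j+1}^H\star\widecheck{\bm{\beta}}_j^H$) collapses the projection, so that
\[
\widecheck{\bm{\alpha}}_{j+1}\star\vec{\widecheck{\mathcal{Q}}}_{j+1}=\mathcal{A}\star\vec{\widecheck{\mathcal{P}}}_{j+1}-\widecheck{\bm{\beta}}_j\star\vec{\widecheck{\mathcal{Q}}}_j,
\]
which is the only nonzero contribution because $\vec{\widecheck{\mathcal{P}}}_{j+1}$ is orthogonal to the first $j$ lateral slices of $\widecheck{\mathcal{P}}_j$. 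Setting $\widecheck{\mathcal{Q}}_{j+1}=[\widecheck{\mathcal{Q}}_j,\vec{\widecheck{\mathcal{Q}}}_{j+1}]$ and appending a new column to $\widecheck{\mathcal{B}}_j$ containing $\widecheck{\bm{\beta}}_j$ as a super-diagonal tube and $\widecheck{\bm{\alpha}}_{j+1}$ as the diagonal tube yields the extended relation $\mathcal{A}\star\widecheck{\mathcal{P}}_{j+1}=\widecheck{\mathcal{Q}}_{j+1}\star\widecheck{\mathcal{B}}_{j+1}$, preserving the upper triangular form because only the last two entries of the new column can be nonzero.

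Next, to obtain the companion relation one step further, I would apply $\mathcal{A}^H$ to $\vec{\widecheck{\mathcal{Q}}}_{j+1}$ and orthogonalize against $\widecheck{\mathcal{P}}_{j+1}$, defining $\widecheck{\bm{\beta}}_{j+1}\star\vec{\widecheck{\mathcal{P}}}_{j+2}=\left(\mathcal{I}_p-\widecheck{\mathcal{P}}_{j+1}\star\widecheck{\mathcal{P}}_{j+1}^H\right)\star\mathcal{A}^H\star\vec{\widecheck{\mathcal{Q}}}_{j+1}$. Using the relation just obtained for $\mathcal{A}\star\widecheck{\mathcal{P}}_{j+1}$ shows that $\widecheck{\mathcal{P}}_{j+1}^H\star\mathcal{A}^H\star\vec{\widecheck{\mathcal{Q}}}_{j+1}=\widecheck{\mathcal{B}}_{j+1}^H\star\vec{\mathcal{E}}_{j+1}=\widecheck{\bm{\alpha}}_{j+1}\star\vec{\mathcal{E}}_{j+1}$, so the projection simplifies to $\widecheck{\bm{\beta}}_{j+1}\star\vec{\widecheck{\mathcal{P}}}_{j+2}=\mathcal{A}^H\star\vec{\widecheck{\mathcal{Q}}}_{j+1}-\widecheck{\bm{\alpha}}_{j+1}\star\vec{\widecheck{\mathcal{P}}}_{j+1}$. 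Reassembling with the previous columns gives $\mathcal{A}^H\star\widecheck{\mathcal{Q}}_{j+1}=\widecheck{\mathcal{P}}_{j+1}\star\widecheck{\mathcal{B}}_{j+1}^H+\widecheck{\bm{\beta}}_{j+1}\star\vec{\widecheck{\mathcal{P}}}_{j+2}\star\vec{\mathcal{E}}_{j+1}^H$, which is exactly the induction hypothesis at step $j+1$.

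Iterating this pair of extensions $m-k-1$ times produces the asserted decompositions at $j=m$. Orthonormality of the new lateral slices is immediate from the construction via \texttt{Normalize} and the projection formulas, and the triangular structure of $\widecheck{\mathcal{B}}_m$ is preserved because each new column acquires only one super-diagonal entry $\widecheck{\bm{\beta}}_{j}$ and one diagonal entry $\widecheck{\bm{\alpha}}_{j+1}$ beyond the inherited block. The main technical obstacle is the first inductive step: one must carefully verify that the base block $\widecheck{\mathcal{B}}_{k+1}$ built in Theorem \ref{theo 10} interacts correctly with the new columns, so that the identity $\widecheck{\mathcal{P}}_{k+1}^H\star\mathcal{A}^H\star\vec{\widecheck{\mathcal{Q}}}_{k+2}$ picks up only $\widecheck{\bm{\alpha}}_{k+2}\star\vec{\mathcal{E}}_{k+2}$; the remaining iterations then repeat essentially verbatim the argument from Theorem \ref{prop 1}, with harmonic Ritz quantities replacing ordinary Ritz quantities throughout.
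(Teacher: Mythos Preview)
Your proposal is correct and follows exactly the approach the paper intends: the paper's own proof consists of the single sentence ``These results can be shown similarly as Theorem~\ref{prop 1},'' and you have carried out precisely that analogy, replacing Ritz quantities by harmonic Ritz quantities and repeating the projection/normalization induction step for step. If anything, you have supplied more detail than the paper does.
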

	
	\begin{proof}
		These results can be shown similarly as Theorem \ref{prop 1}. 
	\end{proof}

	Theorem \ref{theo 10} requires the invertibility of $\mathcal{B}_m$. Notice that this 
	tensor is well conditioned if all the frontal slices of $\widehat{\mathcal{B}}_m$ are well
	conditioned, i.e., if
	\[
	\max_{1\leq i\leq n} \kappa\left(\widehat{\mathcal{B}}_m^{(i)}\right)
	\]
	is small, where
	\[
	\kappa(\widehat{\mathcal{B}}_m^{(i)})=
	\dfrac{\left(\widehat{\bm{s}}_1^{\mathcal{B}_m}\right)^{(i)}}
	{\left(\widehat{\bm{s}}_m^{\mathcal{B}_m} \right)^{(i)}}.
	\]
	
	Algorithm \ref{alg:7} describes computations required to compute approximations of either 
	the $k$ largest singular triplets or the $k$ smallest singular triplets of a third-order 
	tensor $\mathcal{A}$  using the methods we developed in the present or previous 
	subsections.
	
	\begin{algorithm}[H]
		\caption{Tensor Lanczos Bidiagonalization Ritz (t-LBR) algorithm  for computing  the largest and the smallest singular triplets.}	\label{alg:7}
		\textbf{Input:} $\mathcal{A}\in\K^{\ell\times p}_{n}$.\\
		\hspace*{1.2 cm }$m$: the number of tensor Lanczos bidiagonalization steps.\\
		\hspace*{1.2 cm }$\vec{\mathcal{P}}_1\in \mathbb{K}^{p}_n$ with unit norm. \\
		\hspace*{1.2 cm }$k$: the number of the desired singular triplets.\\
		\hspace*{1.2 cm }$\delta$: The tolerance to accept the singular triplets approximated.\\
		\hspace*{1.2 cm }$\epsilon$: machine epsilon.\\
		\hspace*{1.2 cm }type: A Boolean variable for the kind  of augmentation which is either 'Ritz' for Ritz\\
		\hspace*{1.2 cm }augmentation or 'Harm' for harmonic Ritz augmentation.\\
		\textbf{Output:} The $k$ desired singular triplets of $\mathcal{A}$, $\{ \sigma_i, \vec{\mathcal{U}}_i, \vec{\mathcal{V}}_i \}_{i=1:k} $.\\
		\begin{algorithmic}[1]
			\STATE Compute the Partial Lanczos bidiagonalization of $\mathcal{A}$ by Algorithm \ref{alg:6}. 
			\STATE  Compute the t-SVD of $\mathcal{B}_m$ using Algorithm \ref{alg:2}.
			\STATE Check the convergence stated in Equation \eqref{new 38}. If all the $k$ desired singular triplets are well approximated, then exist.
			\STATE  Compute the augmented vectors:
			\IF {type='Ritz' or $\bm{k}(\mathcal{B}_m)> \epsilon^{\frac{1}{2}}$}
			\STATE  Compute the tensors $\mathcal{P}:=\widetilde{\mathcal{P}}_{k+1}$, $\mathcal{Q}:=\widetilde{\mathcal{Q}}_{k+1}$, $\mathcal{B}:=\widetilde{\mathcal{B}}_{k+1}$ and the residual $\vec{\mathcal{F}}_k$ from \eqref{new 40}, \eqref{new 44}, \eqref{new 45} and \eqref{new 50}.
			\ENDIF
			\IF  {type='Harm' and $\bm{k}(\mathcal{B}_m)\leq \epsilon^{\frac{1}{2}}$}
			\STATE Compute the t-SVD of $\mathcal{B}_{m,m+1}$.
			\STATE Compute the t-QR factorization of $\mathcal{J}_{k+1}$  in \eqref{new 76}. 
			\STATE Compute the tensors $\mathcal{P}:=\widecheck{\mathcal{P}}_{k+1}$, $\mathcal{Q}:=\widecheck{\mathcal{Q}}_{k+1}$, $\mathcal{B}:=\widecheck{\mathcal{B}}_{k+1}$ and the residual $\vec{\widecheck{\mathcal{R}}}_m$ from \eqref{new 77}, \eqref{new 80}, \eqref{new 84} and \eqref{new 93}.
			%\STATE $[\vec{\mathcal{P}}_{i+1}, \bm{\alpha}_i+1]=\text{Normalize}(\breve{\mathcal{R}}_{k+1})$.
			\ENDIF
			\STATE Append $m-k$ lateral slices to $\mathcal{P}$ and $\mathcal{Q}$, and $m-k$ horizontal and lateral slices to $\mathcal{B}$ to obtain $\mathcal{P}_m$, $\mathcal{Q}_m$ and $\mathcal{B}_m$, and
			determine a new residual $\vec{\mathcal{R}}_m$.
			\STATE  Go to 2.
		\end{algorithmic}
	\end{algorithm}

	\section{Multidimensional principal component analysis for facial recognition}\label{sec5}
	Principal component analysis (PCA) is used in numerous areas of science and engineering, 
	such as in data denoising, image classification, and facial recognition. Some approaches to
	color image classification 
	involve conversion of color images to grayscale images to reduce the computational burden,
	because color images are represented by tensors, while gray scale images can be 
	represented by matrices; see \cite{classifi1,classifi3}. However, this conversion 
	entails loss of information. A color image in RGB format can be represented by a 
	third-order tensor. This section discusses the application of PCA to third-order
	tensors. 
	
	PCA when applied to gray-scale face recognition computes a set of characteristics 
	(eigenfaces) corresponding to the main components of the initial set of training images. 
	Recognition is done by projecting the training images into the eigenface subspace, in
	which an image of a person is classified by comparing it with other available images in 
	the eigenface subspace. The main advantages of this procedure are its 
	simplicity, speed, and insensitivity to small changes on the faces. 
	
	When applying PCA to third-order tensors using the t-product, tubes, lateral slices, and
	third-order tensors are analogues of scalars, vectors, and matrices in the eigenface 
	technique for classifying grayscale images.  Using this identification, PCA for third-order
	tensors that represent color images is structurally very similar to PCA for matrices that 
	represent grayscale images. The latter is described in \cite{hached2021multidimensional}. 
	
	Let $N$ training color images $I_1,I_2,\ldots,I_N$ of size $\ell\times p\times n$ be 
	available. They are represented by the third-order tensors 
	$\mathcal{I}_1,\mathcal{I}_2,\ldots,\mathcal{I}_N$ in $\R^{\ell\times p\times n}$. The 
	procedure of recognizing color facial images using third-order tensors is as follows:
	\begin{enumerate}
		\item 
		For each image $I_i$ for $i=1,2,\ldots,N$, we determine a lateral slice 
		$\vec{\mathcal{X}}_i\in\R^{\ell p\times 1\times n}$ by vectorizing each frontal slice, 
		i.e., $\vec{\mathcal{X}}_i^{(s)}={\tt vec}(\mathcal{I}_i^{(s)})$ for $s=1,2,\ldots,n$. We
		then construct a tensor, whose frontal slices are given by $\vec{\mathcal{X}}_i$, i.e., 
		\[
		\mathcal{X}=\left[\vec{\mathcal{X}}_1,\vec{\mathcal{X}}_2,\ldots,\vec{\mathcal{X}}_N
		\right]\in\R^{\ell p\times N\times n}.
		\]
		\item 
		Compute the mean of the frontal slices of $\mathcal{X}$, i.e.,
		\[
		\vec{\mathcal{M}}=\sum_{i=1}^{N}\dfrac{\vec{\mathcal{X}}_i}{N},
		\]
		and let
		\[
		\overline{\mathcal{X}}=[\vec{\overline{\mathcal{X}}}_1,\vec{\overline{\mathcal{X}}}_2,
		\ldots,\vec{\overline{\mathcal{X}}}_N],\qquad 
		\vec{\overline{\mathcal{X}}}_i=\vec{\mathcal{X}}_i-\vec{\mathcal{M}}.
		\]
		\item 
		Determine the first $k$ left singular vectors of $\overline{\mathcal{X}}$. We denote them 
		by $\vec{\mathcal{U}}_1,\ldots,\vec{\mathcal{U}}_k$. Construct the projection subspace
		\begin{equation}\label{Uspace}
			\mathbb{U}_k={\tt span}\left\{\vec{\mathcal{U}}_1,\vec{\mathcal{U}}_2,\ldots,
			\vec{\mathcal{A}}_k \right\}
		\end{equation}
		and let
		\[
		\mathcal{U}_k=\left[\vec{\mathcal{U}}_1,\vec{\mathcal{U}}_2,\ldots,\vec{\mathcal{U}}_k
		\right]\in\R^{\ell p\times k\times n}.
		\]
		\item
		Project each face $I_i$ onto the subspace \eqref{Uspace} to obtain
		$\mathcal{U}_k^H\star \vec{\overline{\mathcal{X}}}_i$. A test image $I_0$ also is 
		projected onto the same space to get 
		$\mathcal{U}_k^H\star\left(\vec{\mathcal{X}}_0-\vec{\mathcal{M}}\right)$. Finally,
		determine the closest image to the test image by computing the minimal distance 
		between the projected test image and all the projected  training images.
	\end{enumerate}
	
The main difference between methods that use PCA for facial recognition is the way that 
	the first (dominant) left singular vectors of $\overline{\mathcal{X}}$ are computed. 
	%One of these method is using only tensor Lanczos bidiagonalization/ Golub-Kahan.
	%Hached et al. \cite{hached2021multidimensional} use the tensor partial Lanczos 
	%bidiagonalization algorithm based on the c-product. 
	In the present paper, we use  our proposed method  to compute the
	dominant singular triplets that are used in PCA.  The following algorithm summarises 
	the different steps in our approach.
	
	\begin{algorithm}[H]
		\caption{Facial recognition using tensor Lanczos bidiagonalization with Ritz augmentation.}
		\label{alg:9}
		\begin{algorithmic}[1]
			\STATE \textbf{Input:} Training set of images  $\mathcal{X}$ ($N$ images), mean image $\overline{\mathcal{X}}$, test image  $\mathcal{I}_0$ with its associate  lateral slice $\vec{\mathcal{X}}_0={\tt vec}(\mathcal{I}_0)$;  $m$ the number of tensor Lanczos bidiagonalization algorithm;  $k$ the number of the desired left singular slices.
			\STATE \textbf{Output:} Closest image in the database.
			\STATE $[\mathcal{U}_k, \mathcal{S}_k, \mathcal{V}_k]=\text{t-LBR}(\overline{\mathcal{X}}, m,k)$ using Algorithm \ref{alg:7}. 
			\STATE Project $\overline{\mathcal{X}}$ onto $\mathbb{U}_k$ to get  $\mathcal{P}=\mathcal{U}_k^H \star \overline{\mathcal{X}}$.
			\STATE Project the mean of the test image $I_0$ onto $\mathbb{U}_k$, $\vec{\mathcal{P}}_0=\mathcal{U}_k^H \star \left(\vec{\mathcal{X}}_0 - 	\vec{\mathcal{M}}\right)=\mathcal{U}_k^H \vec{\overline{\mathcal{X}}}_0.$
			\STATE Find $i=\underset{i=1, 2, \ldots, N}{\arg \min} \left\Vert \vec{\mathcal{P}}_0- \vec{\mathcal{P}}_i\right\Vert_F$.
		\end{algorithmic}
	\end{algorithm}

\section{Numerical experiments}\label{sec6}
This section illustrates the performance of Algorithm \ref{alg:7} for detecting the largest 
	or smallest singular triplets when applied to synthetic data, tensor compression, and 
	facial recognition. All computations are carried out on a laptop computer with 
	2.3 GHz Intel Core i5 processors and 8 GB of memory using MATLAB 2018a.

	\subsection{Examples with synthetic data}
	We use synthetic data generated by the MATLAB command ${\tt randn}(\ell,p,n)$, which 
	generates a tensor ${\mathcal A}\in\R^{\ell\times p\times n}$, whose entries are normally 
	distributed pseudorandom numbers with mean zero and variance one. 
	
	\subsubsection{Largest singular values}
	Table \ref{tab1} displays the error in the four largest approximate singular tubes 
	computed by augmentation by Ritz lateral slices (referred to as Ritz in the table) and by
	the partial Lanczos bidiagonalization/Golub-Kahan algorithm (referred to as GK in the
	table) as described in \cite{hached2021multidimensional}, but using the t-product. These 
	errors are given by $\left\Vert\mathcal{S}(i,i,:)-\bm{\Sigma}(i,i,:)\right\Vert_F$ for 
	$i=1,2,3,4$ with $m=20$. Table \ref{tab2} shows the number of iterations required when 
	using augmentation by Ritz lateral slices to approximate the four largest singular 
	triplets for tensors of different sizes and the number of Lanczos bidiagonalization 
	steps $m$. 
	
	\begin{table}[H]
		\centering
		\small\addtolength{\tabcolsep}{-2pt}
		\begin{tabular}{l|l|l|l|l|l|l}
			\hline $i$ & Methods  & $100 \times 100 \times 3$ & $500 \times 500 \times 3$ & 
			$1000\times 1000 \times 3$ & $100\times 100 \times 5$ & $500\times 500 \times 5$ \\
			\hline  \multirow{2}{*}{1} & Ritz  & 7.13e-14
			&  1.60e-13 &  2.27e-13 &  2.85e-14  &  1.63e-13 \\
			&  GK & 8.16e-10 & 0.09  &  0.01  &   3.18e-08       &  0.01  \\
			\hline  \multirow{2}{*}{2} & Ritz & 9.29e-14 & 1.98e-13&  1.56e-13   & 5.62e-14
			& 1.48e-13  \\
			& GK & 1.27e-05 &  0.07 & 0.44 &3.12e-04 & 0.15 \\
			\hline  \multirow{2}{*}{3}   & Ritz & 5.01e-14 & 2.70e-13  &  8.93e-14 & 5.41e-14 & 2.66e-13  \\
			& GK & 0.02 &   0.95 &1.78 & 6.05e-04  & 0.51\\
			\hline   \multirow{2}{*}{4} & Ritz & 3.39e-13  & 4.92e-11  &  9.01e-13  & 3.39e-14 &   6.74e-13\\
			& GK &  0.01 & 1.60 &3.37 & 0.08 &  2.03 \\
			\hline
		\end{tabular}
		\captionof{table}{The Frobenius norm $\left\Vert\mathcal{S}(i,i,:)-\bm{\Sigma}(i,i,:)
			\right\Vert_F$, where $\mathcal{S}(i,i,:)$ denotes the singular tubes computed by either 
			augmentation by Ritz lateral slices (Ritz) or by partial Lanczos bidiagonalization also
			known a partial Golub-Kahan bidiagonalization (GK), and 
			$\bm{\Sigma}(i,i,:)$ stands for the singular tubes determined by the t-SVD method with 
			$m=20$ for $i=1,2,3,4$.} \label{tab1}
	\end{table}
	
	\begin{table}[H]
		\centering
		\small\addtolength{\tabcolsep}{-2pt}
		\begin{tabular}{l|l|l|l|l|l|l|l|l|l|l}
			\hline Ritz  & \multicolumn{2}{l|}{$100\times 100 \times 3 $} & \multicolumn{2}{l|}{$500\times 500 \times 3 $} & \multicolumn{2}{l|}{$1000\times 1000 \times 3 $}& \multicolumn{2}{l|}{$100\times 100 \times 5 $}& \multicolumn{2}{l}{$500\times 500 \times 5 $}\\ 
			\cline{2-11} augmentation  & iter & time &  iter & time&  iter & time&  iter & time&  iter & time\\
			\hline $m=10$ & 15& 0.40 & 29  &  2.84 & 41  & 18.20  & 13 & 0.41  & 29 & 4.09  \\
			\hline $m=20$ &  3 & 0.15  &  5 & 2.14   & 7  & 12.88  & 3 & 0.18 &  5 & 2.91
			\\
			\hline
		\end{tabular}
		\captionof{table}{Number of iterations (iter) needed by the Ritz augmentation method to 
			determine the four largest singular tubes for third-order tensors of different sizes with 
			$m=10,\,20$. The columns with header ``time'' shows the CPU time in seconds.} \label{tab2}
	\end{table}
	Table \ref{tab1} shows the Ritz augmentation method to yield much higher accuracy than
	the GK method. Figures \ref{fig1} and \ref{fig2} display the values of some frames of the
	first $10$ singular tubes of third-order tensors of sizes $100 \times 100 \times 3$ and 
	$1000 \times 1000 \times 5$, respectively, computed by Ritz augmentation using Algorithm 
	\ref{alg:7}, the t-SVD, and partial Lanczos bidiagonalization (GK). Each tube is denoted 
	by $\mathcal{S}(k,k,:)\in\K_n$, where $n$ is equal to $3$ or $5$, and $k=1,2,\ldots,10$. 
	In other word, for a fixed $i$ with $1\leq i \leq n$, we plot $\mathcal{S}(k,k,i)\in\K_n$ 
	for $k=1,2,\ldots,10$. As mentioned above, the $i$th computed singular triplet is 
	accepted as an approximate singular triplet if 
	$\vec{\mathcal{R}}_m\star\vec{\mathcal{E}}_m^H\star\vec{\mathcal{U}}_i$ is small enough 
	for $1\leq i\leq k$, where $k$ is the number of desired singular triplets and 
	the $\vec{\mathcal{U}}_i$ are left singular lateral slice of the current tensor 
	$\mathcal{B}_m$; see eq. \eqref{new 38}. Figure \ref{fig3} shows the evolution of the 
	error computed by \eqref{new 38} for the first three singular triplets determined by 
	Algorithm \ref{alg:7} when applied to a third-order tensor of size
	$1000\times 1000\times 3$ for $m=20$.
	
	\begin{figure}[H]
		\centering
		\begin{tabular}{c}
			\includegraphics[width=0.9\linewidth]{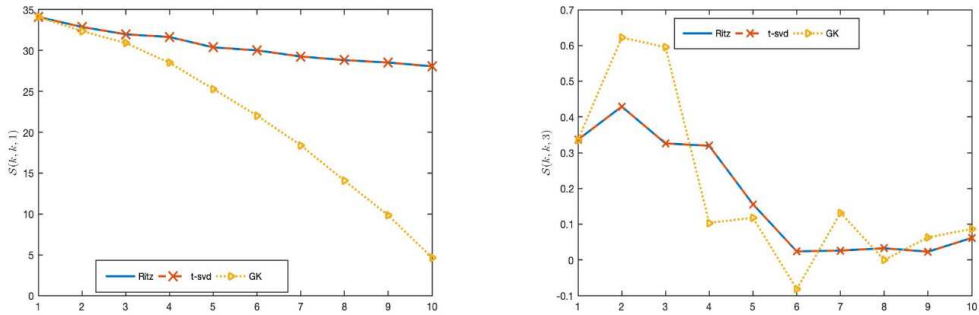}
		\end{tabular}
		\captionof{figure}{On the left, we display the values of the first frontal slices (frames) of the first $10$ singular tubes detected by t-SVD, Ritz augmentation and Partial Lanczos bidiagonalization (GK) for a synthetic data of size $100\times 100 \times 3$  with $m=20$, and on the right we plotted the third frontal slices of these tubes, i.e., $\mathcal{S}(k,k,i)$ with $k=1,2,\ldots,10$ and $i=1,3$.}\label{fig1}
	\end{figure}
	
	Figures \ref{fig1} and \ref{fig2} illustrate that using Algorithm \ref{alg:7} with Ritz 
	augmented method gives more accurate approximations than the GK method. In particular, 
	the frontal slices of each tube computed with Algorithm \ref{alg:7} are very close to the
	corresponding frontal slices of the tubes determined by the t-SVD, independently of the size 
	of the third-order tensor.
	
	\begin{figure}[H]
		\centering
		\begin{tabular}{c}
			\includegraphics[width=0.9\linewidth]{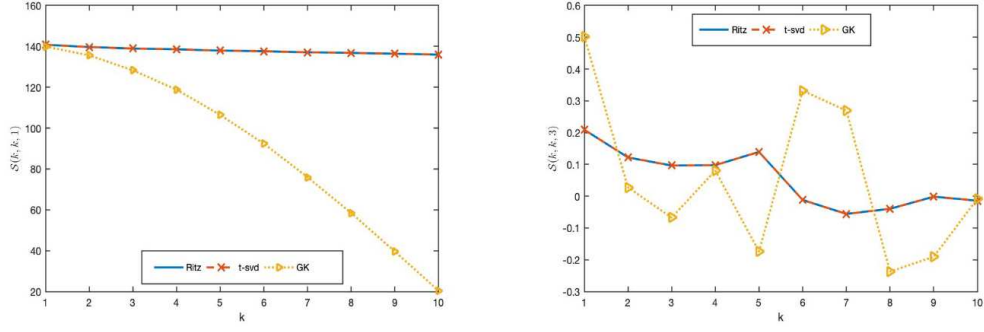}
		\end{tabular}
		\caption{The left-hand side pane shows the values of the first frontal slices (frames) of
			the first $10$ singular tubes computed by t-SVD, Ritz augmentation, and the partial 
			Lanczos bidiagonalization (GK) method for a synthetic data of size 
			$1000\times 1000 \times 5$ with $m=20$. The right-hand side pane displays the third 
			frontal slices of these tubes, i.e., $\mathcal{S}(k,k,i)$ for $k=1,2,\ldots,10$ and 
			$i=1,3$.}\label{fig2}
	\end{figure}
	
	\begin{figure}[H]
		\centering
		\begin{tabular}{c}
			\includegraphics[width=4in, height=2in]{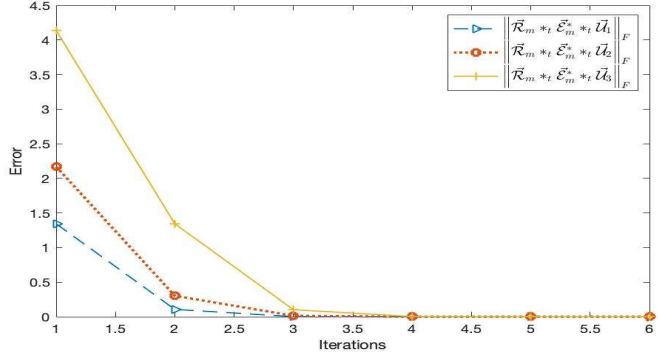}
		\end{tabular}
		\captionof{figure}{Evolution of the remainder term for a third-order tensor of size 
			$1000\times 1000\times 3$ when computing the first three singular triplets by Algorithm
			\ref{alg:7} with Ritz augmentation.}\label{fig3}
	\end{figure}

	\subsubsection{Smallest singular values}
	This subsection illustrates the performance of Algorithm \ref{alg:7} with Ritz 
	augmentation (referred to as Ritz) and with harmonic Ritz augmentation (referred to
	as Harm) for computing the smallest singular triplets of synthetic third-order tensors of 
	different sizes. Table \ref{tab 3} displays the error in the fourth smallest singular tubes 
	computed by Ritz augmentation and harmonic Ritz augmentation for $m=20$, and compares with 
	results determined by the t-SVD method. In Table \ref{tab 4} we show the number of 
	iterations and the required CPU time (in seconds) for these methods when $m=20$.  
	
	\begin{figure}[H]
		\centering
		\begin{tabular}{l|l|l|l|l|l}
			\hline i & Method  & $100\times 100 \times 3$ & $100\times 100 \times 5$  & $500\times 500\times 3$ & $500\times 500 \times 5$\\
			\hline \multirow{2}{*}{$n-3$} & Ritz &  3.82e-11  &  5.22e-12 &    1.34e-10      &  2.50e-10    \\
			& Harm &   1.03e-13   &  4.64e-13   & 4.66e-13    &  1.07e-13 \\
			\hline \multirow{2}{*}{$n-2$} & Ritz &   1.99e-14   & 4.34e-13      &  1.20e-14    &   1.68e-11    \\
			& Harm &   4.94e-15 &  3.10e-13   &   2.46e-14  &  3.77e-14    \\
			\hline \multirow{2}{*}{$n-1$} & Ritz &  8.36e-14  &4.56e-14      &   1.77e-14   &   6.86e-12 \\
			& Harm &   1.64e-15   & 6.05e-15        &  2.88e-14     &   1.39e-13     \\
			\hline \multirow{2}{*}{$n$} & Ritz &  1.38e-15   &  7.71e-16        &     6.49e-15 &   2.00e-12 \\
			& Harm &   8.59e-16   &  7.90e-16         &   3.01e-15   &      1.41e-14 \\
			\hline
		\end{tabular}
		\captionof{table}{The Frobenius norm 
			$\left\Vert\mathcal{S}(i,i,:)-\bm{\Sigma}(i,i,:)\right\Vert_F$, where $\mathcal{S}(i,i,:)$
			denotes the singular tubes determined by Ritz augmentation or harmonic
			Ritz augmentation for $m=20$, and $\bm{\Sigma}(i,i,:)$ are tubes computed by the t-SVD
			method for the four smallest tubes, i.e., for $i=n-3,n-2,n-1,n$.}\label{tab 3}
	\end{figure}
	
	\begin{figure}[H]
		\centering
		\small\addtolength{\tabcolsep}{-1pt}
		\begin{tabular}{l|l|l|l|l|l|l|l|l}
			\hline \multirow{2}{*}{Method} &\multicolumn{2}{l|}{$100\times 100\times 3$}&  \multicolumn{2}{l|}{ $500\times 500\times 3$} & \multicolumn{2}{l|}{ $100\times 100\times 5$} & \multicolumn{2}{l}{ $500\times 500\times 5$}\\
			\cline{2-9} &   CPU time & iter &  CPU time & iter &  CPU time & iter & CPU time & iter \\
			\hline Ritz & 0.99   &  31  & 231.81  & 615 &  1.11 & 30   & 425.83  &   831  \\
			\hline Harm &  0.85  &  29  &  227.49 & 606   &1.03 & 30   &355.35 & 723  \\
			\hline
		\end{tabular}
		\captionof{table}{CPU time in seconds, and number of iterations required by Algorithm 
			\ref{alg:7} with  Ritz augmentation and harmonic Ritz augmentation for $m=20$ to compute 
			the four smallest singular triplets of synthetic third-order tensors of different sizes.}
		\label{tab 4}
	\end{figure}
	
	Tables \ref{tab 3} and \ref{tab 4} show that harmonic Ritz augmentation gives higher 
	accuracy than Ritz augmentation when computing the smallest singular triplets. Figures 
	\ref{fig 4} and \ref{fig 5} depict the Frobenius norm of the remainder term 
	$\vec{\mathcal{R}}_m\star\vec{\mathcal{E}}_m^H\star\vec{\mathcal{U}}_i$ for each 
	iteration with Algorithm \ref{alg:7} with Ritz augmentation and harmonic Ritz augmentation 
	when approximating the last two singular triplets for $m=20$. 
	
	\begin{figure}[H]
		\centering
		\begin{tabular}{c}
			\includegraphics[width=4in, height=2.5in]{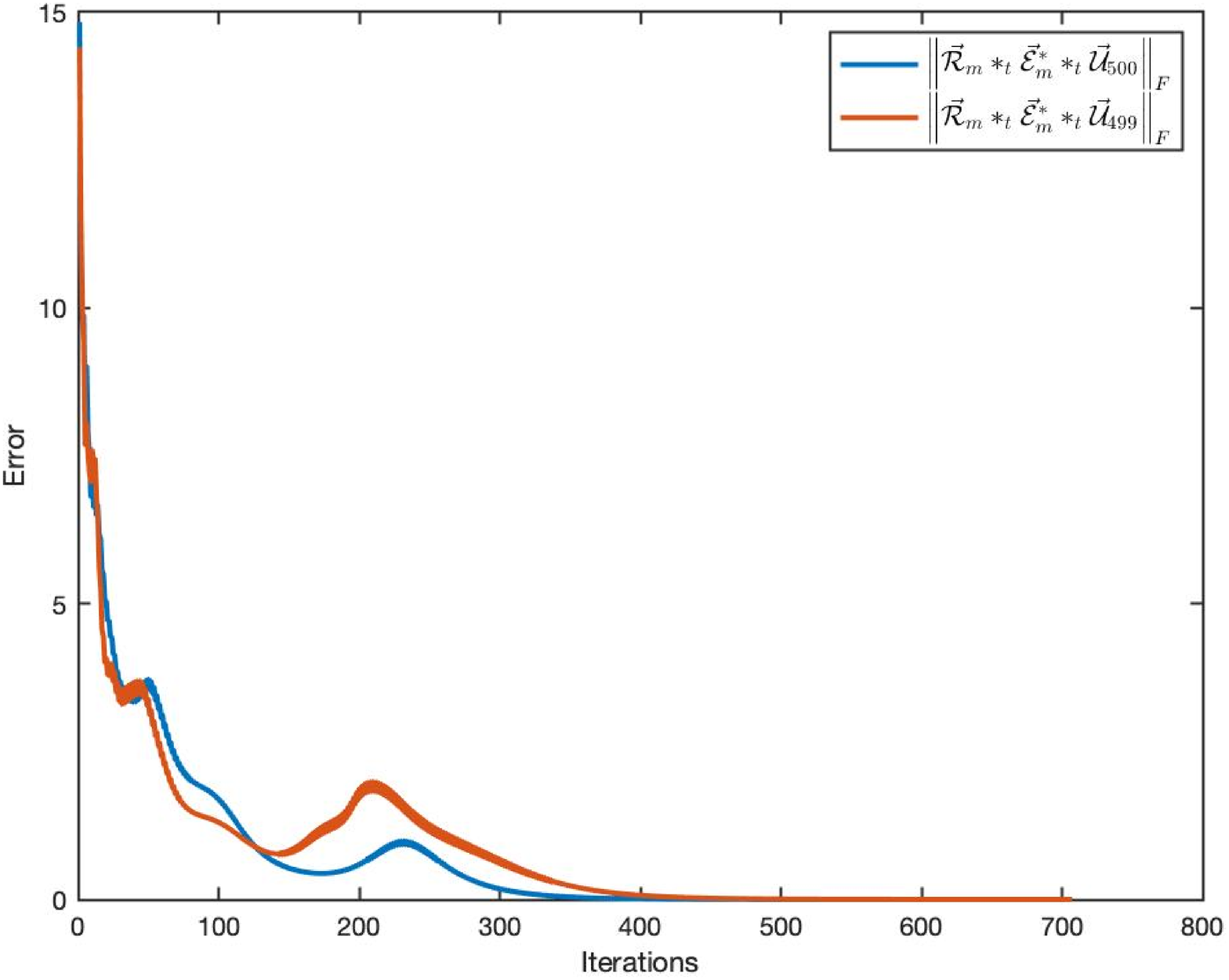}
		\end{tabular}
		\captionof{figure}{The Frobenius norm of 
			$\vec{\mathcal{R}}_m\star \vec{\mathcal{E}}_m^H \star \vec{\mathcal{U}}_i$ obtained by 
			Algorithm \ref{alg:7} with Ritz augmentation when approximating the two smallest
			singular triplets of a synthetic tensor of size $500\times 500\times 5$ with $m=20$ at 
			each iteration for $i=499, 500$.}\label{fig 4}
	\end{figure}
	\begin{figure}[H]
		\centering
		\begin{tabular}{c}
			\includegraphics[width=4in, height=2.5in]{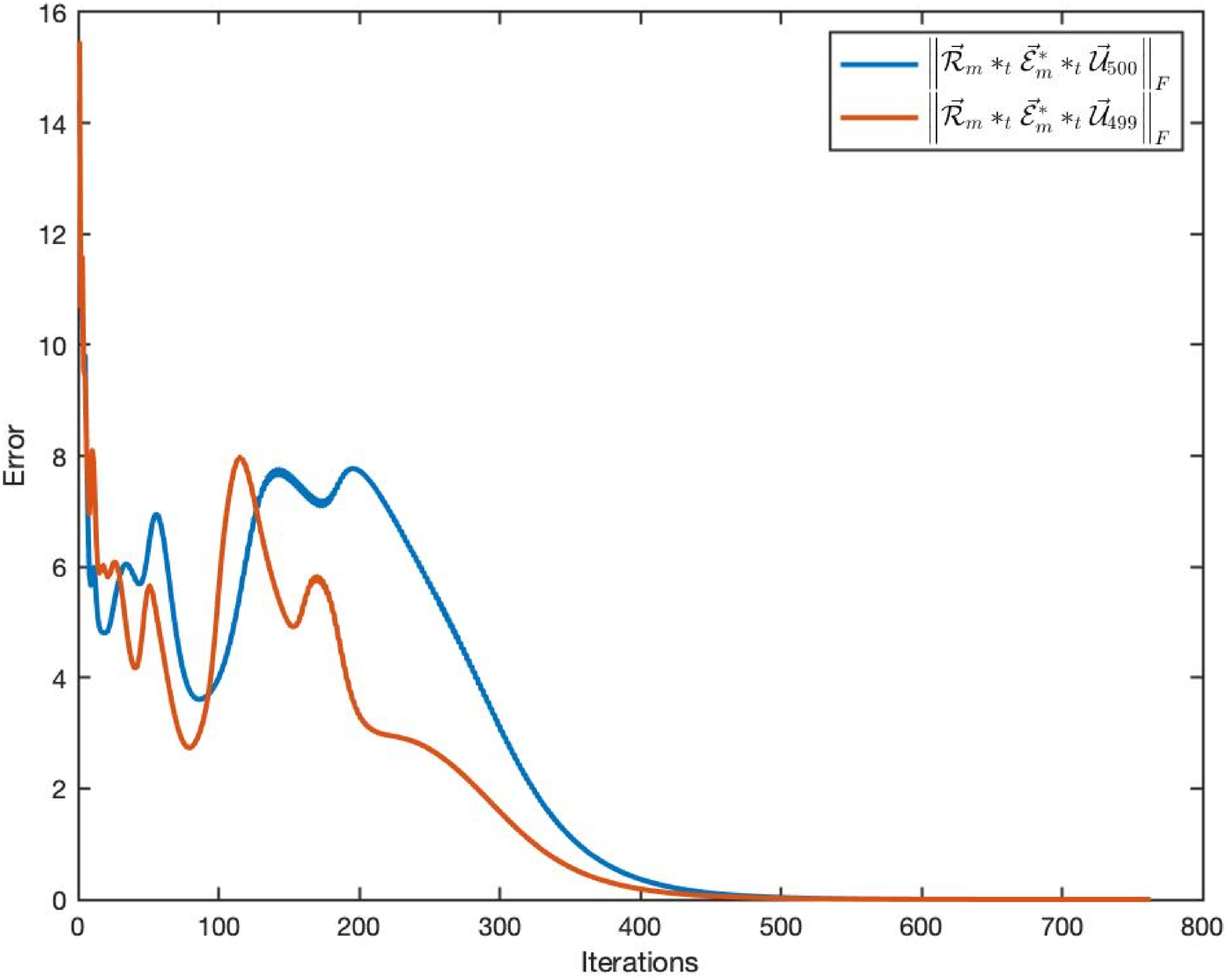}
		\end{tabular}
		\captionof{figure}{The Frobenius norm of $\vec{\mathcal{R}}_m\star \vec{\mathcal{E}}_m^H \star \vec{\mathcal{U}}_i$ obtained by harmonic Ritz augmentation when approximating the last two singular triplets of a synthetic tensor data of size $500\times 500\times 5$ with $m=20$, at each iteration for $i=499, 500$.}\label{fig 5}
	\end{figure}
	Figures \ref{fig 4} and \ref{fig 5} show the error 
	$\|\vec{\mathcal{R}}_m\star\vec{\mathcal{E}}_m^H\star\vec{\mathcal{U}}_i\|_F$ associated
	with Ritz augmentation in Algorithm \ref{alg:7} to converge in a smoother way than the 
	corresponding error for harmonic Ritz augmentation. Both errors converge to zero as the
	number of iterations increases. 
	
	\subsection{Application to data compression}
	Figure \ref{fig 7} displays examples of image compression using two color images: ``house''
	of size $256\times 256\times 3$ and ``Hawaii'' of size $1200\times 1200\times 3$. For each
	image, we compute the $k$th largest singular triplets using Ritz augmentation in 
	Algorithm \ref{alg:7}, which will be referred to as ``Ritz,'' for different numbers $k$ of 
	desired singular triplets. Figure \ref{fig 8} displays the relative error of the 
	compressed images for $k=5,10,15,25$, by using Ritz augmentation (Ritz) and the t-SVD
	method. This error is measured by
	\begin{equation}\label{relerr}
		\dfrac{\left\Vert\mathcal{A}_k-\mathcal{A}\right\Vert_F}
		{\left\Vert\mathcal{A}\right\Vert_F},
	\end{equation}
	where $\mathcal{A}$ denotes the tensor that represents the original image and
	$\mathcal{A}_k=\sum_{i=1}^{k}\vec{\mathcal{U}}_i\star\bm{s}_i\star\vec{\mathcal{V}}^H$.
	
	\begin{figure}[H]
		\centering
		\begin{tabular}{c}
			\includegraphics[width=0.9\linewidth]{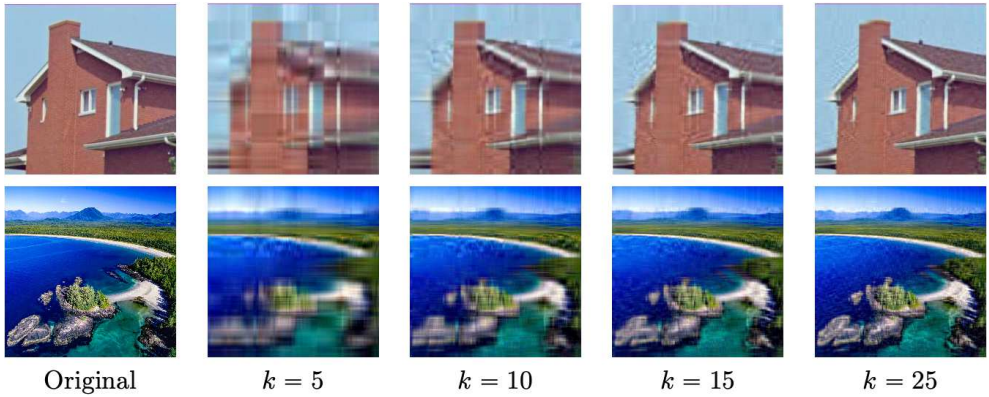}
		\end{tabular}
		\captionof{figure}{Examples of image compression applied to the ``house'' and ``Hawaii''
			images for $k=5,10,15,25$ slices using Algorithm \ref{alg:7} with Ritz augmentation.}
		\label{fig 7}
	\end{figure}
	
	\begin{figure}[H]
		\centering
		\begin{tabular}{cc}
			\includegraphics[width=0.9\linewidth]{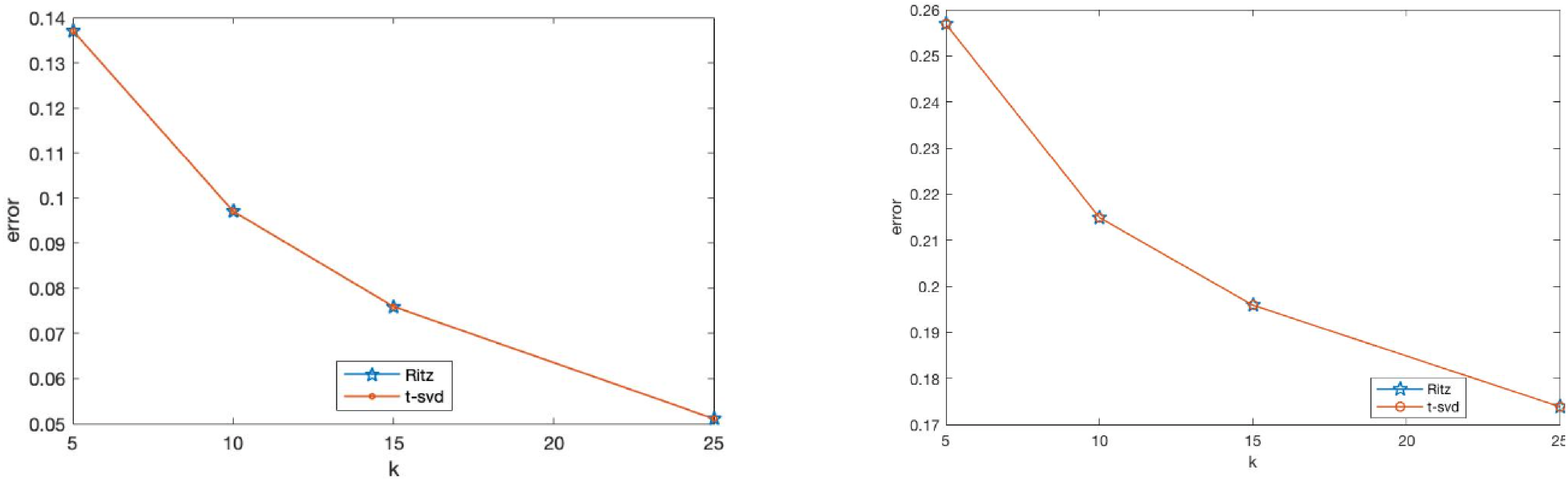}
		\end{tabular}
		\captionof{figure}{Relative compression error \eqref{relerr} for the images ``house'' and
			``Hawaii'' obtained with Algorithm \ref{alg:7} with  Ritz augmentation (Ritz) and the 
			t-SVD method.} \label{fig 8}
	\end{figure}
	
	Figure \ref{fig 8} shows the relative errors obtained with Algorithm \ref{alg:7} with 
	Ritz augmentation and the t-SVD are almost the same. This means that the approximate 
	singular tubes and the right and left singular lateral slices determined by 
	Algorithm \ref{alg:7} with Ritz augmentation are very accurate.

	\subsection{Facial recognition}
	We illustrate the application of Algorithm \ref{alg:9} to facial recognition using color 
	images that are represented by third-order tensors. The images in our test are from the 
	Georgia Tech database GTDB$\_$crop \cite{georg}, which contains $750$ images of $50$ 
	persons, with each person represented by $15$ images that show various facial expressions 
	and facial orientation, and different illumination conditions. Figure \ref{fig 12} shows 
	an example of images of one person in the data set.
	
	\begin{figure}[H]
		\centering
		\begin{tabular}{c}
			\includegraphics[width=0.9\linewidth]{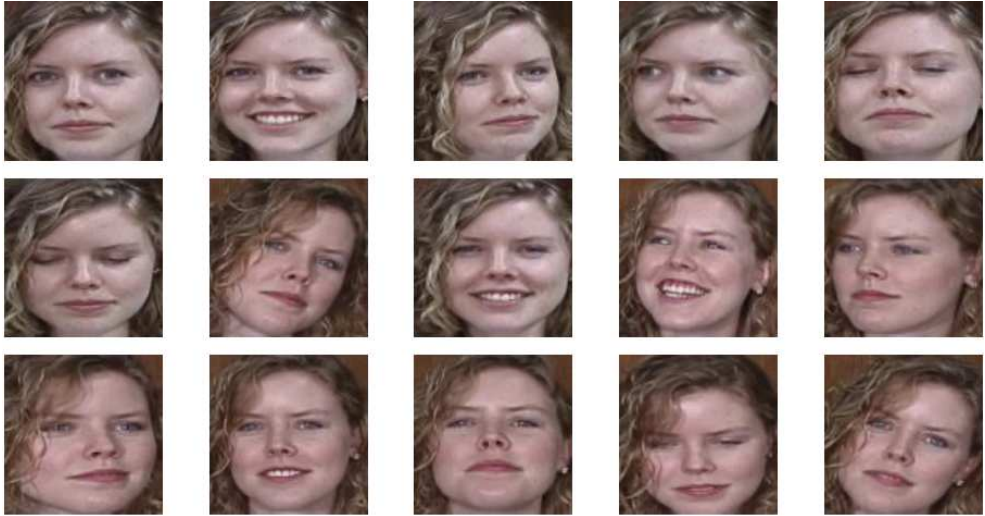}
		\end{tabular}
		\captionof{figure}{An example of a person with different facial expressions and
			orientations.} \label{fig 12}
	\end{figure}
Each image in the data set is of size $100\times 100\times 3$ pixels, and we use $3$ 
	randomly chosen images of each person as test images. The remaining $600$ images form 
	our training set and define the tensor $\mathcal{X}\in\R^{10000\times 600\times 3}$. We 
	applied  Algorithm \ref{alg:9} and compared  the results with  those obtained by 
	the t-SVD and also with results obtained by the` Golub-Kahan (GK) algorithm using
	the t-product. The performance of these methods is measured by the identification rate given by
	\begin{equation}\label{identrate}
		\text{Identification rate}=\dfrac{\text{number of correctly matched images}}
		{\text{number of test images}}\times 100 (\%).
	\end{equation}
Figures \ref{fig 13} and \ref{fig 14} show results obtained for $k=1$ and $k=5$ for two 
	different persons. The mean image is defined as in Algorithm \ref{alg:9}.
	
	\begin{figure}[H]
		\centering
		\begin{tabular}{cc}
			\includegraphics[width=0.5\linewidth]{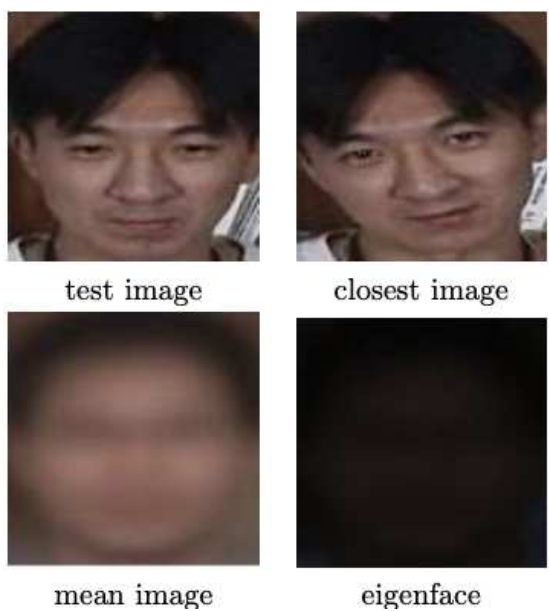}
		\end{tabular}
		\captionof{figure}{A test  for $k=1$.} \label{fig 13}
	\end{figure}
	\begin{figure}[H]
		\centering
		\begin{tabular}{cc}
			\includegraphics[width=0.5\linewidth]{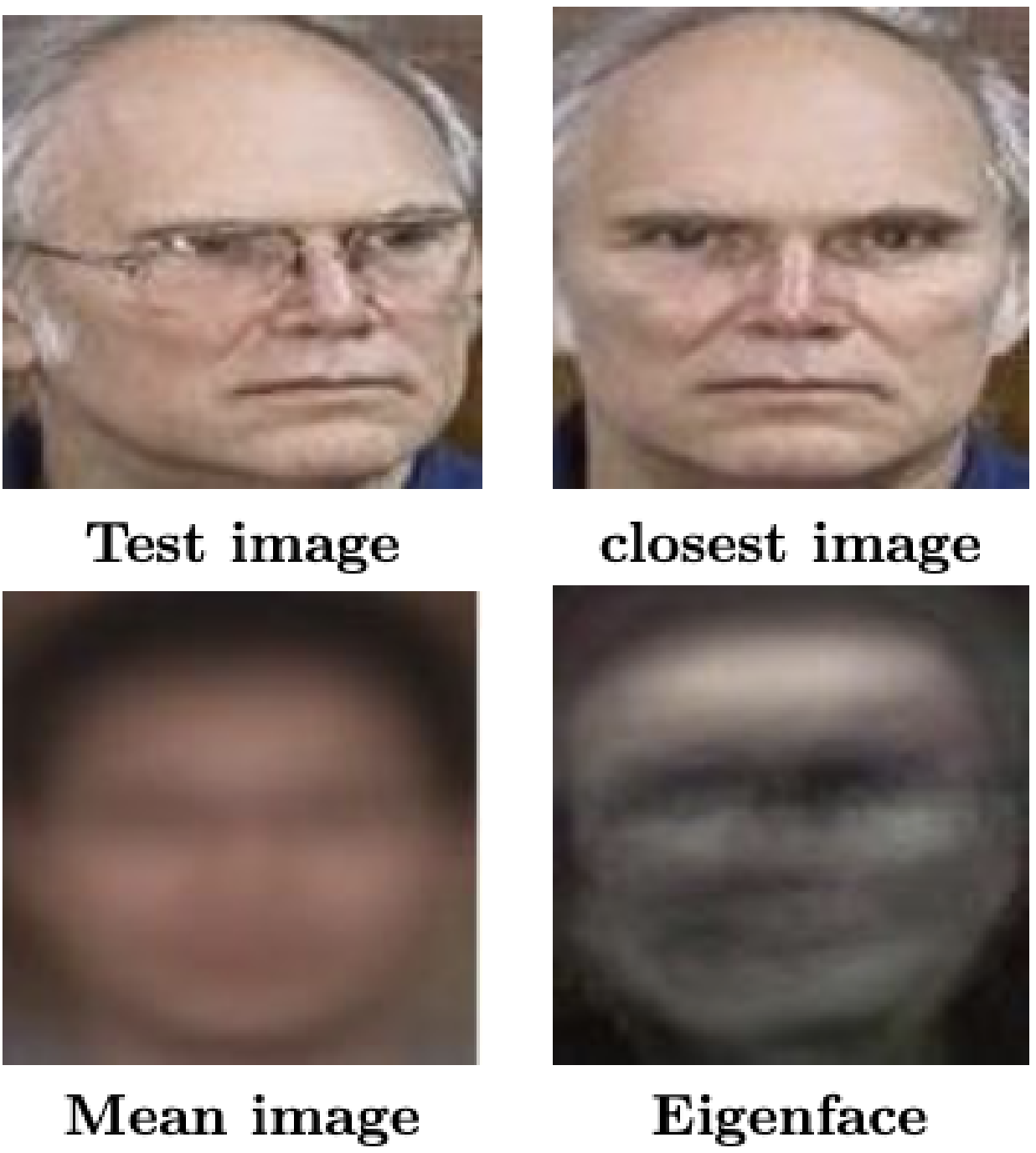}
		\end{tabular}
		\captionof{figure}{A test  for $k=5$.} \label{fig 14}
	\end{figure}
	
	\begin{figure}[H]
		\centering
		\begin{tabular}{c}
			\includegraphics[width=0.7\linewidth]{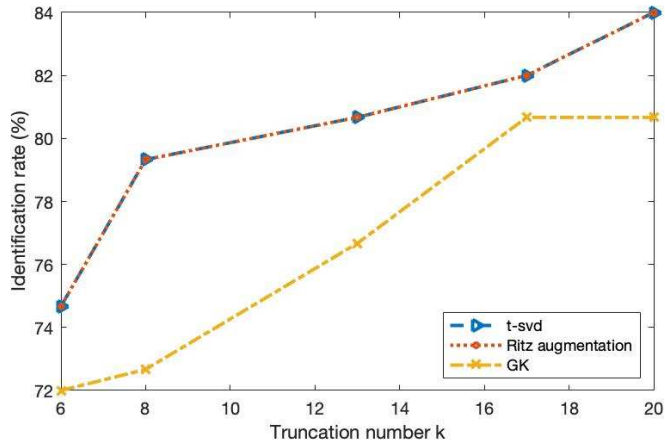}
		\end{tabular}
		\captionof{figure}{Identification rates for different truncation indices $k$ by Ritz augmentation,  t-SVD and Golub-kahan methods.} \label{fig 15}
	\end{figure}
\noindent Figures \ref{fig 13} and \ref{fig 14} show that  Algorithm \ref{alg:9}  performs well for some 
	values of  the truncation index $k$. In Figure \ref{fig 15},  we plotted  the identification rate 
	\eqref{identrate}  obtained with Algorithm \ref{alg:9} (Ritz augmentation), GK for $m=k$, and with 
	the exact t-SVD method   for the  $150$ test images.\\
\begin{table}[H]
	\centering
	\begin{tabular}{l|l|l|l|l|l|l}
		k &  \multicolumn{2}{l}{$2$} &  \multicolumn{2}{|l}{$3$} &  \multicolumn{2}{|l}{$4$} \\
		\hline Method & Ritz & t-SVD & Ritz & t-SVD & Ritz & t-SVD \\
		\hline CPU time (s) &   \textbf{10.60} & 52.82 & \textbf{13.11}  &  63.63 & \textbf{13.88} & 64.77
	\end{tabular}
\captionof{table}{CPU time (in seconds) for Algorithm \ref{alg:9} (Ritz) and for the
t-SVD method for $m=10$ and different values of the truncation index $k$.}
\label{tab 5}
\end{table}

Table \ref{tab 5} reports CPU times for Algorithm \ref{alg:9} for $m=10$ (Ritz) and 
for the t-SVD method for different values of the truncation index $k$. The results show 
Algorithm \ref{alg:9} to be very effective both in terms of accuracy and CPU time 
compared to the t-SVD and the classical Golub-Kahan methods.

	\section{Conclusion and extensions}\label{sec7}
	This paper presents two new methods for approximating the largest or smallest singular
	triplets of large third-order tensors using the t-product. We use restarted Lanczos 
	bidiagonalization for third-order tensors to develop the Ritz augmentation method to 
	determine the largest or smallest singular triplets. Moreover, we propose the 
	harmonic Ritz augmentation method to compute the smallest singular triplets. These methods
	are applied to data compression and face recognition.

\end{document}